\documentclass[11pt]{amsart}
\usepackage{amsfonts}
\usepackage{bbm}
\usepackage{amsfonts,amssymb,amsmath,amsthm}
\usepackage{url}
\usepackage{enumerate}
\usepackage{bbm}
\usepackage[all]{xy}
\usepackage[pdftex, colorlinks, citecolor=red, backref=page]{hyperref}
\usepackage{color,xcolor}

\urlstyle{sf}
\newtheorem{theorem}{Theorem}[section]
\newtheorem{lemma}[theorem]{Lemma}
\newtheorem{definition}[theorem]{Definition}
\newtheorem{notion}[theorem]{}

\newtheorem{proposition}[theorem]{Proposition}
\newtheorem{corollary}[theorem]{Corollary}
\theoremstyle{definition}
\newtheorem{remark}[theorem]{Remark}

\newtheorem{example}[theorem]{Example}


\author{Qingnan An}
\address{School of Mathematics and Statistics, Northeast Normal University, Changchun, {\rm130024}, China}
\email{qingnanan1024@outlook.com}

\author{Chunguang Li}
\address{School of Mathematics and Statistics, Northeast Normal University, Changchun, {\rm130024}, China}
\email{licg864@nenu.edu.cn}

\author{Zhichao Liu}
\address{School of Mathematical Sciences,
Dalian University of Technology,
Dalian, {\rm116024}, China }
\email{lzc.12@outlook.com}

\keywords{ Classification; Real rank zero; Extension; Total K-theory}

\subjclass[2000]{Primary 46L35, Secondary 46L80 19K35}
\begin{document}

\title[A latticed total K-theory] {A latticed total K-theory}

\begin{abstract}
In this paper, a new invariant was built towards the classification of separable C*-algebras of real rank zero,  
which we call latticed total K-theory. 
A classification theorem is given in terms of such an invariant for a large class of separable C*-algebras of real rank zero arising from the  extensions
of finite and infinite C*-algebras. Many algebras with both finite and infinite projections can be classified.
\end{abstract}

\maketitle
\section{Introduction}

\subsection{Background}
In  the late 1980s, Elliott initiated the programme of classification of amenable ${\mathrm C}^*$-algebras (up to isomorphism) by their K-theoretical data \cite{Ell}. It has been shown that a large number of separable unital simple amenable ${\mathrm C}^*$-algebras can be classified by the standard Elliott invariant. 
For  purely infinite case, Kirchberg and Phillips classify all purely infinite simple separable amenable ${\mathrm C}^*$-algebras  which satisfy the UCT (see \cite{Rbook} for an overview); for stably finite case, combining the results of Elliott-Gong-Lin-Niu and Tikuisis-White-Winter, separable, unital, simple and infinite dimensional C*-algebras with finite nuclear dimension which satisfy the UCT are classified. \cite{EGLN,EGLN2,GLN1,GLN2,TWW}.

On the other hand, the classification of non-simple ${\mathrm C}^*$-algebras is far from being satisfactory. Gong \cite{G} first presented an example to show that the ordered graded $\mathrm{K}$-group is not sufficient
any more for non-simple AH algebras of real rank zero (and no dimension growth).
Elliott, Gong, and Su \cite{EGS} constructed such AD algebras by the reduction theorem of AH algebras.
Dadarlat and Loring \cite{DL3} also gave such an example for AD algebras directly. In 1997, Dadarlat and Gong \cite{DG} classified A$\mathcal{HD}$-algebras (see Definition \ref{AHD}) of real rank zero  by means of
scaled total $\mathrm{K}$-theory  together with a certain order structure (see also \cite{Ei}). Later, \cite{AELL, ALZ} push forward a classification for certain ASH algebras with one-dimensional local spectra (it includes much more general subhomogeneous building blocks). 
Recently, the first and third author construct two separable nuclear C*-algebras of stable rank one and real rank zero with the same ordered scaled total K-theory, but they are not isomorphic, a new invariant called total Cuntz semigroup $\underline{{\rm Cu}}$ built in \cite{AL jfa} can distinguish these two algebras, this leads to a further modification for Elliott Conjecture for real rank zero case \cite{AL}.

We point out that the total Cuntz semigroup $\underline{\mathrm{Cu}}$  
 does catch a lot of key information for algebras of stable rank one (\cite[Remark 5.16]{AL}), which may not have real rank zero.
If we only consider the algebras of real rank zero, 
there exists two infinite algebras with the same total Cuntz semigroup but they are not isomorphic. For example, $$\underline{\mathrm{Cu}}(\mathcal{O}_2)\cong \{0,\infty\}\cong \underline{\mathrm{Cu}}(\mathcal{O}_\infty\otimes \mathcal{Q}),$$ where $\mathcal{Q}$ is the UHF algebra whose ${\rm K}_0$-group is $(\mathbb{Q},\mathbb{Q}_+,1)$.  R\o rdam's example shows that there exists a simple infinite C*-algebra containing a finite projection \cite{R acta}, which is not purely infinite.
Until now, even for simple infinite C*-algebras, it is a nontrivial task to get a classification result. In order to classify certain infinite algebras and unify the finite and infinite case, we consider the semigroup of Murray-von Neumann equivalent classes of projections instead of the ${\rm K}_0$-group.

\subsection{New invariant}
In this present paper, we build a new invariant $\underline{V}_u$  towards the classification of separable C*-algebras of real rank zero, which we call the latticed total K-theory. It combines the semigroup of Murray-von Neumann equivalent classes of projections with the coefficients K-theory. This monoid has advantages both in the finite and infinite setting, rather than consider the $\mathrm{K}_0$-group, the semigroup of Murray-von Neumann equivalent classes contains all the information of projections and determines the $\mathrm{K}_0$-group. But under the Grothendieck construction, some key information will be lost (see \cite[Remark 7.8]{R acta} and \cite[2.3]{LS}). Note  that the projections in any C*-algebra $A$ of real rank zero satisfy Riesz decomposition property with respect to Murray-von Neumann equivalence, but this property can disappear in the  $\mathrm{K}_0$-group \cite{Z,Go}.

The class of C*-algebras of real rank zero we will classify are constructed from the extension theory.
This kind of works began from the extension of AF-algebras by Elliott \cite{Ell1}, and got a lot of attention from many
 works  \cite{L0,L1,L00,LR}. 
In \cite{R2}, R\o rdam  considered the six-term exact sequence of K-theory as an invariant and this was widely applied in the series of later classification works \cite{ERZ,ERR, GR}. The topic is closely linked to the extension problem, one can refer to \cite{EGKRT} for a review and many useful tables outlining the extension results.
Our results will also be based on those theories, but we will only focus on the invariant $\underline{V}(\cdot)$, since the latticed total K-theory $\underline{V}(\cdot)$
would catch the structure  of ideal lattice naturally. It is also worth mentioning that  the ordered six-term exact sequences of K-theory or even total K-theory doesn't distinguish the exact sequences
$$
0\to \mathcal{K}\oplus (\mathcal{O}_2\otimes \mathcal{K})
\to \widetilde{\mathcal{K}\oplus (\mathcal{O}_2\otimes \mathcal{K})}\to\mathbb{C}\to 0
$$
and
$$
0\to \mathcal{K}
\to \widetilde{\mathcal{K}}\to\mathbb{C}\to 0.$$
But the Murray-von Neumann semigroup performs well:
  $$\mathrm{V}(\widetilde{\mathcal{K}\oplus (\mathcal{O}_2\otimes \mathcal{K})})\ncong \mathrm{V}(\widetilde{\mathcal{K}}).$$ 
(For the case
of real rank greater than zero, we may refer the readers to the recent works \cite{LN, GaLN}.)


\subsection{Main results}
We will show that the total K-theory can be recovered from the latticed total K-theory; see Proposition \ref{recover prop}.

\begin{theorem}
By restricting on the class of separable C*-algebras of real rank zero,
we can recover  the ordered scaled total $\mathrm{K}$-theory from $\underline{V}_u$.
\end{theorem}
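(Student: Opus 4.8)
The plan is to exhibit each component of the ordered scaled total $\mathrm{K}$-theory as a canonical construction performed on the data packaged in $\underline{V}_u$. By definition $\underline{V}_u(A)$ records the Murray--von Neumann monoid $\mathrm{V}(A)$ (together with its distinguished scale and unit, encoded by the subscript $u$), the coefficient groups $\mathrm{K}_i(A;\mathbb{Z}/n)$ and $\mathrm{K}_1(A)$, and the Bockstein maps linking them. The only component not literally present is the ordered scaled group $(\mathrm{K}_0(A),\mathrm{K}_0(A)_+,\Sigma(A))$, so the bulk of the argument is to reconstruct it from the monoid part.

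First I would recover $\mathrm{K}_0(A)$ as the Grothendieck enveloping group $G(\mathrm{V}(A))$, which is canonically isomorphic to $\mathrm{K}_0(A)$. The positive cone is then forced: $\mathrm{K}_0(A)_+$ is by definition the image of the natural map $\mathrm{V}(A)\to G(\mathrm{V}(A))$, and the scale $\Sigma(A)$ is the image of the distinguished subset of $\mathrm{V}(A)$ consisting of classes of projections in $A$ itself. Thus the ordered scaled group is determined by the monoid alone, while $\mathrm{K}_1(A)$ and $\mathrm{K}_i(A;\mathbb{Z}/n)$ are carried over verbatim.

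Next I would recover the Bockstein operations. The reduction maps $\mathrm{K}_0(A)\to \mathrm{K}_0(A;\mathbb{Z}/n)$ arise from monoid morphisms $\mathrm{V}(A)\to \mathrm{K}_0(A;\mathbb{Z}/n)$ recorded in $\underline{V}_u$ and extend uniquely to $G(\mathrm{V}(A))$ by its universal property; the maps $\mathrm{K}_1(A;\mathbb{Z}/n)\to \mathrm{K}_0(A)$ terminate in a group and are given directly. The remaining operations among the coefficient groups involve no monoid data and transfer unchanged. One then verifies that the Bockstein exact sequences and the standard relations hold, which is routine once every map has been identified after passing to $G(\mathrm{V}(A))=\mathrm{K}_0(A)$.

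The step I expect to be the main obstacle is the compatibility of the reconstructed order with the \emph{full} total $\mathrm{K}$-theory, rather than the identification of $\mathrm{K}_0(A)$ itself. The ordered total $\mathrm{K}$-theory carries positive structure threading the whole Bockstein diagram, including cones on the coefficient groups described through projections over the dimension-drop algebras computing $\mathrm{K}_i(\,\cdot\,;\mathbb{Z}/n)$, and one must check this structure is both encoded in and recoverable from the latticed data. Here the hypothesis of real rank zero is essential: it guarantees that projections are abundant, so that the relevant positive cones are generated by projection classes carried in $\mathrm{V}(A)$, and the Riesz decomposition property for Murray--von Neumann equivalence — which survives in $\mathrm{V}(A)$ even though it can be lost in $\mathrm{K}_0(A)$ — makes the reconstructed order coherent across the diagram.
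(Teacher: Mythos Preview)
Your proposal rests on a misreading of what $\underline{V}_u(A)$ actually is. It is not a tuple consisting of $V(A)$ together with the groups $\mathrm{K}_1(A)$, $\mathrm{K}_i(A;\mathbb{Z}_n)$ and separately recorded Bockstein maps. Rather, $\underline{V}(A)$ is a \emph{single} pre-ordered monoid whose elements are equivalence classes of tuples $(p,u,\oplus_n(s_{n,0},s_{n,1}))$ where the $\mathrm{K}$-theoretic data $(u,s_{n,0},s_{n,1})$ lives in the ideal $I_p$ generated by $p$, not in $A$ itself; the scale $\Delta(A)$ is a subset of this monoid. There are no ``monoid morphisms $V(A)\to \mathrm{K}_0(A;\mathbb{Z}_n)$ recorded in $\underline{V}_u$'' to extend, and the groups $\mathrm{K}_1(A)$ and $\mathrm{K}_*(A;\mathbb{Z}_n)$ are not sitting inside the invariant as separate summands to be ``carried over verbatim''. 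Consequently the piecemeal reconstruction you outline does not get off the ground.

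The paper's argument is both simpler and different in kind: one applies the Grothendieck construction to the \emph{whole} monoid $\underline{V}(A)$ at once. The key computation (Theorem~\ref{Gr V total}) shows that $\mathrm{Gr}(\underline{V}(A))\cong\underline{\mathrm{K}}(A)$ naturally, with the image $\rho(\underline{V}(A))$ recovering exactly the Dadarlat--Gong positive cone $\underline{\mathrm{K}}(A)_+$ and the image of the scale $\Delta(A)$ recovering $\Sigma(A)$. The $\Lambda$-module structure on $\mathrm{Gr}(\underline{V}(A))$ is part of the categorical data of being a $\underline{\mathcal{V}}$-cone, so the Bockstein operations come for free; there is nothing to reassemble by hand. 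The obstacle you flag in your final paragraph --- that the order on the full total $\mathrm{K}$-theory is ideal-sensitive and not determined by $\mathrm{K}_0^+(A)$ alone --- is precisely what the latticed construction of $\underline{V}(A)$ is designed to encode, and it is resolved in one stroke by taking $\mathrm{Gr}$ of the entire object rather than of $V(A)$ alone.
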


Hence, via \cite[Theorem 9.1]{DG}, 
the latticed total K-theory would distinguish the algebras in \cite{DL0,DE},
which can not be distinguished by the invariant related to the six-term exact sequence (see \cite[Remark 3.13]{ERZ}).

We are able to classify a large class of algebras in terms of the latticed total K-theory, of which the algebra can be
purely infinite, stably finite or even infinite with non-zero finite projections.
This provides an effective approach to the open question raised in \cite[5.1]{ERZ}. The class of C*-algebras we consider partially integrates and generalizes the types presented in \cite{EGKRT}.

Denote by $\mathcal{L}$ the collection of all the countably direct sums of the following algebras: A$\mathcal{HD}$ algebras of real rank zero,  Kirchberg algebras satisfying UCT and separable nuclear $\mathcal{O}_2$-stable algebras of real rank zero.

\begin{theorem}
Let $A_1,A_2,B_1,B_2\in \mathcal{L}$, assume $B_1,B_2$ are stable and $A_1,A_2$ are unital simple.
Then for any two unital extensions with trivial boundary maps
$$
0\to B_i\to E_i\to A_i \to 0,\quad i=1,2,
$$
we have $E_1\cong E_2$ if and only if
$
\underline{V}_u(E_1)\cong \underline{V}_u(E_2).
$
\end{theorem}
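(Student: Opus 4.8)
The forward implication is pure functoriality: any isomorphism $E_1\cong E_2$ induces an isomorphism $\underline{V}_u(E_1)\cong\underline{V}_u(E_2)$, since $\underline{V}_u(\cdot)$ has been set up as a functor. All the content is in the converse, and the plan is to pass from an abstract isomorphism $\phi\colon\underline{V}_u(E_1)\to\underline{V}_u(E_2)$ first to a matching of the two extensions at the level of ideal, quotient, and gluing data, and only then to lift that matching to an isomorphism of $C^*$-algebras.

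For the converse, the first step is to recover the ideal--quotient decomposition from $\phi$. Because $A_i$ is unital simple while $B_i$ is stable, $B_i$ is distinguished as the \emph{unique} maximal ideal of $E_i$ whose quotient is unital: any other maximal ideal $M\not\supseteq B_i$ satisfies $M+B_i=E_i$, so $E_i/M\cong B_i/(M\cap B_i)$ is a quotient of a stable algebra, hence stable and non-unital. Since $\underline{V}_u$ records the Murray--von Neumann semigroup $V(E_i)$, whose order ideals reproduce the ideal lattice of $E_i$, together with the scale given by the unit, this distinguished $B_i$ is visible to $\phi$. Thus $\phi$ carries the part of $\underline{V}_u(E_1)$ coming from $B_1$ onto that coming from $B_2$ and descends to the quotient data, yielding
\[
\underline{V}_u(B_1)\cong\underline{V}_u(B_2),\qquad \underline{V}_u(A_1)\cong\underline{V}_u(A_2).
\]

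Next I would classify the pieces. Invoking the recovery of the ordered scaled total $\mathrm{K}$-theory from $\underline{V}_u$ (Proposition~\ref{recover prop}), each of these isomorphisms yields an isomorphism of ordered scaled total $\mathrm{K}$-theory together with the extra semigroup data. On the stable ideal $B_i$, a countable direct sum of algebras from the three classes comprising $\mathcal{L}$, I would match summands: A$\mathcal{HD}$ summands by Dadarlat--Gong \cite{DG}, Kirchberg summands by Kirchberg--Phillips \cite{Rbook}, and $\mathcal{O}_2$-stable summands by $\mathcal{O}_2$-absorption. For the last class $\underline{\mathrm{K}}=0$, so these summands are invisible to total $\mathrm{K}$-theory and are pinned down only through the semigroup $V$ retained by $\underline{V}_u$ --- exactly the phenomenon exhibited by the pair $\mathcal{O}_2$, $\mathcal{O}_\infty\otimes\mathcal{Q}$ in the introduction, which is what makes the latticed refinement essential rather than cosmetic. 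The same classification results applied to the unital simple quotients give $A_1\cong A_2$, and altogether $B_1\cong B_2$.

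The final, and hardest, step is to lift this pair of isomorphisms to an isomorphism of extensions. Since both extensions have trivial boundary maps, the six-term exact sequence of total $\mathrm{K}$-theory breaks into short exact sequences, and, via the universal (multi)coefficient theorem, the extension class lies in the pure-extension ($\mathrm{Ext}$) part of the relevant $\mathrm{KK}$-group rather than in the $\mathrm{Hom}$ part that is detected by the vanishing index and exponential maps. The key claim to establish is that the \emph{single coherent} isomorphism $\phi$ forces the two extension classes to agree under the identifications $B_1\cong B_2$ and $A_1\cong A_2$; that is, that $\underline{V}_u$ remembers not merely the ideal and quotient but how the projections of $E_i$ sit over $B_i$, i.e.\ the gluing. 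Granting this, an existence/uniqueness argument in the spirit of R\o rdam's classification of extensions \cite{R2} and its refinements \cite{ERZ,ERR,GR}, run as an approximate intertwining of the associated Busby invariants, produces the desired isomorphism $E_1\cong E_2$. I expect the compatibility of gluing data to be the main obstacle: in the mixed finite/infinite regime the usual total-$\mathrm{K}$-theoretic extension invariants fail to separate examples (as the two displayed extensions in the introduction already show), so one must genuinely exploit the semigroup-level information carried by $\underline{V}_u$ to determine the $\mathrm{Ext}$-class before feeding it into the intertwining.
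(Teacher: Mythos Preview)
Your overall architecture is right, and the final step you flag as ``the main obstacle'' is indeed where most of the work lies. But there are two places earlier in the argument that you treat as routine and which are not.

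First, the descent to the quotient. You assert that $\phi$ ``descends to the quotient data, yielding $\underline{V}_u(A_1)\cong\underline{V}_u(A_2)$.'' This fails in general: the sequence $0\to\underline{V}(B_i)\to\underline{V}(E_i)\to\underline{V}(A_i)\to 0$ need not be exact on the right (Remark~\ref{v total exact not}), so there is no clean quotient construction at the level of $\underline{V}$. What \emph{is} exact is the $V$-sequence (Theorem~\ref{exact V}) and, crucially, the $\mathrm{K}_*$-sequence, since the boundary maps are assumed trivial. The paper therefore does not try to descend $\phi$ itself; it first applies the recovery functor $H$ (Proposition~\ref{recover prop}) to get a commuting square in $\underline{\mathrm{K}}$, restricts further to $\mathrm{K}_*$, and only then uses five-lemma style exactness to produce the induced isomorphism $\varrho$ on $\mathrm{K}_*(A_i)$. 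You never get an isomorphism of $\underline{V}_u(A_i)$.

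Second, before you can classify the quotients you must know that $A_1$ and $A_2$ lie in the \emph{same} subclass of $\mathcal{L}$: one could a priori be a simple A$\mathcal{HD}$ algebra and the other Kirchberg, with isomorphic $\mathrm{K}_*$. The paper handles this with Lemma~\ref{infinite or sr1}, which shows, purely from the $V$-level isomorphism of the extensions, that $A_1$ is infinite iff $A_2$ is. You skip this step entirely.

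For the step you do flag: the paper does not run an approximate intertwining. Once $\phi_0\colon B_1\to B_2$ and $\phi_1\colon A_1\to A_2$ have been lifted, the commuting $\mathrm{K}_*$-diagram feeds directly into Wei's unital UCT (Theorem~\ref{strong wei}) to match the classes in $\mathrm{Ext}_{ss}^u$; then the corona factorization property of $B_1$ (Remark~\ref{CFP ex}, Proposition~\ref{cfp abs}) makes both extensions absorbing, so stable strong unitary equivalence upgrades to strong unitary equivalence and hence $E_1\cong E_2$. No semigroup-level gluing analysis beyond the $\mathrm{K}_*$-diagram is needed.
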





This paper is organized as follows.
In Section 2, we list some preliminaries for  total $\mathrm{K}$-theory with Dadarlat-Gong order,
extension theory.
In Section 3, we build the invariant $\underline{V}_u$---the latticed total $\mathrm{K}$-theory and show that total $\mathrm{K}$-theory  can be recovered from $\underline{V}_u$.
In Section 4, we talk about the ideal structure  of the latticed total K-theory, build the relative category together with corresponding functors and show the relations between them.
In Section 5, we prove the main classification result.


\section{Preliminaries}

\subsection{Terminologies and notations}
\begin{notion}\rm
  Let $A$ be a unital $\mathrm{C}^*$-algebra. $A$ is said to have stable rank one, written $sr(A)=1$, if the set of invertible elements of $A$ is dense. $A$ is said to have real rank zero, written $rr(A)=0$, if the set of invertible self-adjoint elements is dense in the set $A_{sa}$ of self-adjoint elements of $A$. If $A$ is not unital, let us denote the minimal unitization of $A$ by $\widetilde{A}$. A non-unital $\mathrm{C}^*$-algebra is said to have stable rank one (or real rank zero) if its unitization has stable rank one (or real rank zero).
\end{notion}

\begin{notion}\rm
  Let $A$ be a unital $\mathrm{C}^*$-algebra. Denote by $\mathcal{K}$ the compact operators on a separable infinite-dimensional Hilbert space and by $\mathcal{P}(A)$ the collection of all projections in $A$. Let $p,q\in \mathcal{P}(A)$. One says that $p$ is {\it Murray-von Neumann equivalent} to $q$ in $A$ and writes $p\sim_{\rm Mv} q$ if there exists $x\in A$ such that $x^*x=p$ and $xx^*=q$. We will write $p\lesssim_{\rm Mv} q$ if $p$ is equivalent to some sub-projection of $q$.

Let $V(A)$ be the Murray-von Neumann semigroup of $A$, i.e., $$V(A)=\{[p]_{\rm Mv}|\, p\in \mathcal{P}(A\otimes \mathcal{K})\},$$ where $[p]_{\rm Mv}$ is the class of  $p$. We use $[p]_{\mathrm{K}_0}$ to denote 
the class of $p$ in $\mathrm{K}_0(A)$.
\end{notion}

\begin{notion}\rm
$A$ is said to have {\it cancellation of projections}, if for any projections $p,q,e,f\in A$ with $pe=0$, $qf=0$, $e\sim_{\rm Mv} f$, and $p+e\sim_{\rm Mv} q+f$, then $p\sim_{\rm Mv} q$. $A$ has cancellation of projections if and only if $p\sim_{\rm Mv} q$ implies that there exists a unitary $u\in \widetilde{A}$ such that $u^*pu=q$ (see \cite[3.1]{L}). Every unital $\mathrm{C}^*$-algebra of stable rank one has cancellation of projections.

If $p$ is a projection in $A\otimes \mathcal{K}$, we denote by $I_p$ the ideal of $A\otimes \mathcal{K}$ generated by $p$. Note that if $p\sim_{\rm Mv} q$, then $I_p=I_q$, we will also use $I_{[p]_{\rm Mv}}$ to denote $I_p$. Let $I,J$ be ideals of $A$ with $I\subset J$. Denote by $\iota_{IJ}:\,I\to J$ the natural embedding map, we use $V(\iota_{IJ})$ to denote the induced map from $V(I)$ to $V(J)$.
\end{notion}

\begin{definition}\rm
We say a C*-algebra $A$ is $\sigma_{\rm p}$-{\it unital}, if $A$ has a countable approximate unit consisting of projections.
Denote by $C^*_{\sigma_{\rm p}}$ the category whose objects are separable $\sigma_{\rm p}$-unital C*-algebras and morphisms are *-homomorphisms.
\end{definition}

\begin{notion}\rm \label{AHD}{\rm (}\cite[2.2-2.3]{GJL}, \cite{GJL2}{\rm )}
We shall say a ${\rm C}^*$-algebra is an A$\mathcal{HD}$ algebra,
if it is an inductive limit of finite direct sums of algebras $M_n(\widetilde{\mathbb{I}}_p)$ and $PM_n(C(X))P$, where $$
\mathbb{I}_p=\{f\in M_p(C_0(0,1]):\,f(1)=\lambda\cdot1_p,\,1_p {\rm \,is\, the\, identity\, of}\, M_p\}
$$ is the Elliott-Thomsen dimension drop interval algebra
and $X$ is one of the following finite connected CW complexes: $\{pt\},~\mathbb{T},~[0, 1],~W_k.$ Here, $P\in M_n(C(X))$ is a projection and for $k\geq 2$, $W_k$ denotes the Moore space obtained by attaching the unit disk to the circle by a degree $k$-map, such as $f_k: \mathbb{T}\rightarrow \mathbb{T}$, $e^{{\rm i}\theta}\mapsto e^{{\rm i}k\theta}$. (This class of ${\rm C}^*$-algebras also play an important role in the classification of ${\rm C}^*$-algebras of ideal property \cite{GJLP1,GJLP2}.)

\end{notion}


\subsection{The category $\Lambda$ and total K-theory}

\begin{definition}\rm
Denote $\Lambda$ to be  the category whose objects are $\mathbb{Z}_2 \times \mathbb{Z}^{+}$ graded groups $G_0\oplus G_1\oplus\bigoplus_{n=1}^\infty (G_{0,n}\oplus G_{1,n})$ together with 
maps $\rho, \kappa$:
$$
\rho_{n}^j: G_{j} \rightarrow G_{j,n},\,\,
\kappa_{m n, m}^{j}: G_{j,m} \rightarrow G_{j,mn},\,\,
\kappa_{n, m n}^{j}: G_{j,mn} \rightarrow G_{j,m},\,\,j=0,1,
$$
and maps 
$$
\beta_{n}^{0}: G_{0,n} \rightarrow G_{1},\,\,\beta_{n}^{1}: G_{1,n} \rightarrow G_{0},
$$
satisfying the following exact sequences:
$$
\xymatrixcolsep{2pc}
\xymatrix{
{\,\,G_{0}\,\,} \ar[r]^-{\rho_n^0}
& {\,\,G_{0,n}\,\,}\ar[r]^-{\beta_n^0}
& {\,\,G_{1}\,\,} \ar[d]^-{\times n}
 \\
{\,\,G_{0}\,\,} \ar[u]^-{\times n}
& {\,\,G_{1,n}\,\,}\ar[l]^-{\beta_n^1}
& {\,\,G_{1}\,\,}\ar[l]^-{\rho_n^1} }
$$
and
$$
\xymatrixcolsep{2pc}
\xymatrix{
{\,\,G_{0,mn}\,\,} \ar[r]^-{\kappa_{n,mn}^0}
& {\,\,G_{0,n}\,\,}\ar[r]^-{\beta_{m,n}^0}
& {\,\,G_{1,m}\,\,} \ar[d]^-{\kappa_{mn,m}^1}
 \\
{\,\,G_{0,m}\,\,} \ar[u]^-{\kappa_{mn,m}^0}
& {\,\,G_{1,n}\,\,}\ar[l]^-{\beta_{m,n}^1}
& {\,\,G_{1,mn}\,\,}\ar[l]^-{\kappa_{n,mn}^1}, }
$$
where $\beta_{m,n}^0=\rho_n^{1}\circ\beta_n^0$ and $\beta_{m,n}^1=\rho_n^{0}\circ\beta_n^1.$ We will call such objects $\Lambda$-modules.

The morphisms in $\Lambda$ are graded group morphisms which are compatible with all the $\rho, \kappa,\beta$, and we will call such morphisms $\Lambda$-linear maps.
\end{definition}

\begin{notion}\label{def k-total}\rm
 (\cite[Section 4]{DG}) For $n\geq 2$, the mod-$n$ K-theory groups are defined by
$$\mathrm{K}_* (A;\mathbb{Z}_n)=\mathrm{K}_*(A\otimes C_0(W_n)),$$
where $\mathbb{Z}_n:=\mathbb{Z}/n\mathbb{Z}$, $W_n$ denotes the Moore space obtained by attaching the unit disk to the circle by a degree $n$-map.
The ${\rm C}^*$-algebra $C_0(W_n)$ of continuous functions vanishing at the base point is isomorphic to the mapping cone of the canonical map of degree $n$ from $C(\mathbb{T})$ to itself.
For $n=0$, we set ${\rm K}_{*}(A ; \mathbb{Z}_n)=$ ${\rm K}_{*}(A)$ and for $n=1, {\rm K}_{*}(A ; \mathbb{Z}_n)=0$.

For a $\mathrm{C}^{*}$-algebra $A$, one defines the total K-theory of $A$ by
$$
\underline{{\rm K}}(A)=\bigoplus_{n=0}^{\infty} {\rm K}_{*}(A ; \mathbb{Z}_n) .
$$
It is a $\mathbb{Z}_2 \times \mathbb{Z}^{+}$graded group. It was shown 
that the coefficient maps
$$
\begin{gathered}
\rho: \mathbb{Z} \rightarrow \mathbb{Z}_n, \quad \rho(1)=[1], \\
\kappa_{mn, m}: \mathbb{Z}_m \rightarrow \mathbb{Z}_{mn}, \quad \kappa_{m n, m}[1]=n[1], \\
\kappa_{n, m n}: \mathbb{Z}_{mn} \rightarrow \mathbb{Z}_n, \quad \kappa_{n, m n}[1]=[1],
\end{gathered}
$$
induce natural transformations
$$
\rho_{n}^{j}: {\rm K}_{j}(A) \rightarrow {\rm K}_{j}(A ; \mathbb{Z}_n),
$$
$$
\kappa_{m n, m}^{j}: {\rm K}_{j}(A ; \mathbb{Z}_m) \rightarrow {\rm K}_{j}(A ; \mathbb{Z}_{mn}),
$$
$$
\kappa_{n, m n}^{j}: {\rm K}_{j}(A ; \mathbb{Z}_{mn}) \rightarrow {\rm K}_{j}(A ; \mathbb{Z}_n) .
$$
The Bockstein operation
$$
\beta_{n}^{j}: {\rm K}_{j}(A ; \mathbb{Z}_n) \rightarrow {\rm K}_{j+1}(A)
$$
appears in the six-term exact sequence
$$
{\rm K}_{j}(A) \stackrel{\times n}{\longrightarrow} {\rm K}_{j}(A) \stackrel{\rho_{n}^{j}}{\longrightarrow} {\rm K}_{j}(A ; \mathbb{Z}_n) \stackrel{\beta_{n}^{j}}{\longrightarrow} {\rm K}_{j+1}(A) \stackrel{\times n}{\longrightarrow} {\rm K}_{j+1}(A)
$$
induced by the cofibre sequence
$$
A \otimes S C_{0}\left(\mathbb{T}\right) \longrightarrow A \otimes C_{0}\left(W_{n}\right) \stackrel{\beta}{\longrightarrow} A \otimes C_{0}\left(\mathbb{T}\right) \stackrel{n}{\longrightarrow} A \otimes C_{0}\left(\mathbb{T}\right),
$$
where $S C_{0}\left(\mathbb{T}\right)$ is the suspension algebra of $C_{0}\left(\mathbb{T}\right)$.

There is a second six-term exact sequence involving the Bockstein operations. This is induced by a cofibre sequence
$$
A \otimes S C_{0}\left(W_{n}\right) \longrightarrow A \otimes C_{0}\left(W_{m}\right) \longrightarrow A \otimes C_{0}\left(W_{m n}\right) \longrightarrow A \otimes C_{0}\left(W_{n}\right)
$$
and takes the form:
$$
{\rm K}_{j+1}(A ; \mathbb{Z}_n) \stackrel{\beta_{m, n}^{j+1}}{\longrightarrow} {\rm K}_{j}(A ; \mathbb{Z}_m) \stackrel{\kappa_{m n, m}^{j}}{\longrightarrow} {\rm K}_{j}(A ; \mathbb{Z}_{mn}) \stackrel{\kappa_{n, m n}^{j}}{\longrightarrow} {\rm K}_{j}(A ; \mathbb{Z}_n),
$$
where $\beta_{m, n}^{j}=\rho_{m}^{j+1} \circ \beta_{n}^{j}$.

Thus, $\underline{{\rm K}}(A)$ together with $\rho, \beta, \kappa$ becomes a $\Lambda$-module.

\end{notion}
\begin{notion}\rm
Let $A$ be a  C*-algebra with an approximate unit consisting of projections.
Recall that any ideal in $A\otimes \mathcal{K}$ is of the form $I\otimes\mathcal{K}$ for some ideal $I$ in $A$. If
$e$ is a projection in $A\otimes \mathcal{K}$, we denote by $I(e)$ the unique ideal of $A$ such that
$I(e)\otimes \mathcal{K}$ is the ideal of $A\otimes \mathcal{K}$ generated by $e$.

For $z\in \mathrm{K}_0^+(A)$,  we denote
by $\mathcal{I}(z)$ the set of all ideals $I(e)$ with the property that $z = [e] \in \mathrm{K}_0(A)$
for some projection $e$ in $A\otimes \mathcal{K}$. One may think of $\mathcal{I}(z)$ as being the ideal
support of the K-theory class $z$.
We say that a C*-algebra $ A$ has {\it  property} $(\mathcal{I})$ if $\mathcal{I}(z)$ is a singleton for
any $z \in \mathrm{K}_0^+(A)$. Equivalently, $A$ has property ($\mathcal{I}$) if and only if any two
projections in $A\otimes \mathcal{K}$ with the same K-theory class generate the same ideal.

Note that if $A$ has cancellation of projections or if $A$ is an A$\mathcal{HD}$-algebra,
then $A$ has property ($\mathcal{I}$).
If $I\otimes \mathcal{K}$ is an ideal of $A\otimes \mathcal{K}$, let us denote
$$\underline{\mathrm{K}}({I}\,|\,A)=:
\underline{\mathrm{K}}(\iota_{I})(\underline{\mathrm{K}}({I}))\subset \underline{\mathrm{K}}(A)$$
the image of $\underline{\mathrm{K}}({I})$
in $\underline{\mathrm{K}}({A})$. An element $x \in \underline{\mathrm{K}}({A})$ will be written in the form $x = (x_n^i)$ with
$x_n^i
\in \mathrm{K}_i(A;\mathbb{Z}_n)$, $i = 0, 1$. In particular, the component of $x$ in ${\rm K}_0(A)$ is
denoted by $x_0^0$.
\end{notion}

\begin{definition}\label{dg order}
\rm
Let $A$ be a separable C*-algebra.
We define $\underline{\mathrm{K}}(A)_+$ to be the subsemigroup of $\underline{\mathrm{K}}(A)$
generated by those elements
$x = (x_n^i) \in \underline{\mathrm{K}}(A)$ with the property that
$x_0^0\in \mathrm{K}_0^+(A)$ and
$$x \in \bigcup\, \{ \underline{\mathrm{K}}({I}\,|\,A)\mid \,I \in \mathcal{I}(x_0^0)\}.$$
Note that if $A$ is a simple C*-algebra, then
$$\underline{\mathrm{K}}(A)_+ = \{x\in \underline{\mathrm{K}}(A)\mid
 x_0^0\in \mathrm{K}_0^+(A)\}.$$
Suppose that $A$ and $B$ are C*-algebras and
$\phi :\, A\to B $ is a $*$-homomorphism. Then it is not hard to see that $\underline{\mathrm{K}}(\phi)(\underline{\mathrm{K}}(A)_+)
\subset \underline{\mathrm{K}}(B)_+$.

Note that our definition has a little difference from the original one in
\cite[Definition 4.6]{DG}. But if
the algebra $A$ has property $(\mathcal{I})$, they are the same. (Note that $A$ has cancellation
of projections or $A$ is simple implies $A$ has property ($\mathcal{I}$).)
We  still call this order structure {\bf  Dadarlat-Gong order}.

In particular, we may also denote
$$\mathrm{K}_*^+(A)=:\mathrm{K}_*(A)\cap \underline{\mathrm{K}}(A)_+,$$
where $\mathrm{K}_*(A)$ is identified with its natural image in $\underline{\mathrm{K}}(A)$ (This is also different from that given by Elliott  \cite{Ell} for the case of stable rank greater than one \cite{V}).
\end{definition}

Though we have a slight modification for the original definition, the following theorem stills holds.


\begin{theorem}{\rm (\cite[Porposition 4.8--4.9]{DG})}\label{ordertotal}
Suppose that $A$ has a countable approximate unit $(e_n)$ consisting
of projections. Then
\begin{enumerate}
  \item [(i)]$\underline{\mathrm{K}}(A)=\underline{\mathrm{K}}(A)_+-\underline{\mathrm{K}}(A)_+$;
  \item [(ii)]If $A$ is stably finite and has property $(\mathcal{I})$, $\underline{\mathrm{K}}(A)_+\cap\{-\underline{\mathrm{K}}(A)_+\} = \{0\}$, and hence,
$(\underline{\mathrm{K}}(A),\underline{\mathrm{K}}(A)_+)$ is an ordered group;
  \item [(iii)]For any $x\in \underline{\mathrm{K}}(A)$, there are positive integers $k$, $n$ such that $k[e_n]+x \in \underline{\mathrm{K}}(A)_+$.
\end{enumerate}
\end{theorem}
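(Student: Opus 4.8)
The plan is to treat (iii) as the engine, deduce (i) from it, and prove (ii) separately using stable finiteness together with property $(\mathcal{I})$. The one structural tool underlying everything is the continuity of total K-theory. After replacing $(e_n)$ by an increasing sequential approximate unit of projections (which a separable algebra with a countable approximate unit of projections admits), the corners $B_m := e_m A e_m$ are unital with unit $e_m$, satisfy $A = \overline{\bigcup_m B_m}$, and since each functor $\mathrm{K}_i(\,\cdot\,;\mathbb{Z}_n) = \mathrm{K}_i(\,\cdot\,\otimes C_0(W_n))$ commutes with inductive limits, one gets $\underline{\mathrm{K}}(A) = \varinjlim_m \underline{\mathrm{K}}(B_m)$ with limiting maps induced by the corner inclusions $\iota_m : B_m \hookrightarrow A$.

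For (iii), fix $x \in \underline{\mathrm{K}}(A)$. As $\underline{\mathrm{K}}(A)$ is a direct sum, $x$ has only finitely many nonzero components, so by the limit description there are an index $m$ and an element $y \in \underline{\mathrm{K}}(B_m)$ with $\underline{\mathrm{K}}(\iota_m)(y) = x$. Now I work inside the unital algebra $B_m$: the class $[e_m]$ is an order unit of $(\mathrm{K}_0(B_m), \mathrm{K}_0^+(B_m))$ — indeed any $p' \in \mathcal{P}(M_N(B_m))$ satisfies $N[e_m] - [p'] = [1_{M_N}-p'] \in \mathrm{K}_0^+(B_m)$ — so I may choose $k$ with $y_0^0 + (k-1)[e_m] = [q] \in \mathrm{K}_0^+(B_m)$. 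Then $y_0^0 + k[e_m] = [q \oplus e_m]$, and here is the point of spending one extra copy of the unit: $q\oplus e_m$ dominates a copy of $e_m$, hence is full in $B_m$, so $I(q\oplus e_m) = B_m$ and $\underline{\mathrm{K}}(B_m \mid B_m) = \underline{\mathrm{K}}(B_m)$. Therefore $y + k[e_m]$ is one of the generators of $\underline{\mathrm{K}}(B_m)_+$, i.e. $y + k[e_m] \in \underline{\mathrm{K}}(B_m)_+$. Applying the positivity-preserving map $\underline{\mathrm{K}}(\iota_m)$ and using $\underline{\mathrm{K}}(\iota_m)([e_m]) = [e_m]$ yields $x + k[e_m] \in \underline{\mathrm{K}}(A)_+$, which is (iii).

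Statement (i) is then immediate: $k[e_n]$ is itself a generator of $\underline{\mathrm{K}}(A)_+$ (its only nonzero component is $k[e_n] \in \mathrm{K}_0^+(A)$, realized by the full projection $\mathrm{diag}(e_n,\dots,e_n)$ lying in the ideal it generates), so from (iii) we write $x = (x + k[e_n]) - k[e_n]$ with both summands in $\underline{\mathrm{K}}(A)_+$. For (ii), suppose $x, -x \in \underline{\mathrm{K}}(A)_+$. Since the $\mathrm{K}_0$-component of a sum of generators is the sum of their ($\mathrm{K}_0^+$) components, $x_0^0 \in \mathrm{K}_0^+(A) \cap (-\mathrm{K}_0^+(A))$; stable finiteness forces this to vanish (if $[p]+[q]=0$ then $(p\oplus q)\oplus s \sim_{\rm Mv} s$ for some projection $s$, and finiteness of the projections of $A\otimes\mathcal{K}$ gives $p\oplus q = 0$). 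Writing $x = \sum_\alpha x^{(\alpha)}$ as a sum of generators, properness of $\mathrm{K}_0^+(A)$ then forces each $(x^{(\alpha)})_0^0 = 0$; by property $(\mathcal{I})$ the set $\mathcal{I}(0)$ is the singleton $\{0\}$ (a projection with trivial $\mathrm{K}_0$-class generates the same ideal as $0$, namely $0$), so each $x^{(\alpha)} \in \underline{\mathrm{K}}(0 \mid A) = 0$ and hence $x = 0$. Combined with (i), this makes $(\underline{\mathrm{K}}(A), \underline{\mathrm{K}}(A)_+)$ an ordered group.

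The main obstacle is the fullness bookkeeping in (iii): the (modified) Dadarlat–Gong positivity is not merely a condition on $x_0^0$ but requires an ideal $I \in \mathcal{I}(x_0^0)$ that \emph{simultaneously} supports the whole element $x$, so it is essential that after adding enough copies of the unit the $\mathrm{K}_0$-component be realized by a \emph{full} projection; the device of adding one extra copy of $[e_m]$ is exactly what guarantees this and lets $\underline{\mathrm{K}}(I\mid B_m)$ collapse onto all of $\underline{\mathrm{K}}(B_m)$. The remaining points are routine: arranging an increasing approximate unit of projections and invoking continuity of each mod-$n$ K-functor.
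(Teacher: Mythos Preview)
The paper does not supply its own proof of this statement; it is quoted from \cite[Propositions~4.8--4.9]{DG} with the remark preceding the theorem that the slight modification of $\underline{\mathrm{K}}(A)_+$ made in Definition~\ref{dg order} does not affect the conclusion. There is therefore nothing to compare against beyond the original Dadarlat--Gong argument, and your proof is correct and follows that natural line: reduce to a unital corner via continuity of total $\mathrm{K}$-theory, exploit that $[e_m]$ is an order unit there, spend one extra copy of the unit to guarantee fullness (this is precisely the point where one must be careful with the modified cone), and push forward along the positivity-preserving map $\underline{\mathrm{K}}(\iota_m)$; for (ii), use stable finiteness to kill the $\mathrm{K}_0$-component of every generator in a decomposition of $x$, and then property~$(\mathcal{I})$ to force the supporting ideal to be zero.

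The one step you label ``routine'' that deserves a word is the passage to an \emph{increasing} approximate unit of projections, since there is no real-rank-zero hypothesis here to hand you spectral projections. It is nonetheless true: given a projection $p$ already built and $e_n$ with $\|e_np-p\|$ small, functional calculus on $e_npe_n$ produces a subprojection $q\le e_n$ close to $p$; conjugating $e_n$ by a unitary $u\in\widetilde A$ close to $1$ with $u^*qu=p$ yields a projection $u^*e_nu\ge p$ that is close to $e_n$, and the induction runs. Alternatively one can sidestep the issue entirely by replacing the corners $e_mAe_m$ with the increasing ideals $J_m=I_{e_1}+\cdots+I_{e_m}$, each of which carries the full projection $e_1\oplus\cdots\oplus e_m$, and the rest of your argument goes through verbatim.
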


\subsection{Extension of C*-algebras}


\begin{proposition}{\rm (\cite[Proposition 4]{LR})}\label{lin inj}
Let $0\to B\to E\to A\to 0$
be an extension  of $C^*$-algebras. Let $\delta_{j}: \mathrm{K}_{j}(A) \rightarrow \mathrm{K}_{1-j}(B)$ for $j=0,1$, be the boundary  maps.
Assume that $A$ and $B$ have real rank zero. Then the following three conditions are equivalent :

{\rm (a)} $\delta_{0} \equiv 0$;

{\rm (b)} $rr(E)=0$;

{\rm (c)} all projections in $A$ are images of projections in $E$.
\end{proposition}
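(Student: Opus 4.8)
The plan is to treat condition (c)---the lifting of projections---as the central hub, proving $(b)\Leftrightarrow(c)$ and $(a)\Leftrightarrow(c)$, and to defer all matrix-algebra issues to the very end. First I would set up the standard six-term exact sequence associated to $0\to B\to E\xrightarrow{\pi} A\to 0$ and identify $\delta_0\colon \mathrm{K}_0(A)\to \mathrm{K}_1(B)$ with the exponential map: for a projection $p\in A$ choose a self-adjoint lift $a\in E$ with $0\le a\le 1$ and $\pi(a)=p$; then $\exp(2\pi\mathrm{i}\,a)$ is a unitary in $\widetilde{B}$ (since $\exp(2\pi\mathrm{i}\,p)=1$), and $\delta_0([p]_{\mathrm{K}_0})$ equals, up to sign, its class $[\exp(2\pi\mathrm{i}\,a)]_{\mathrm{K}_1}$.

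The two easy implications come next. For $(c)\Rightarrow(a)$, if $p=\pi(P)$ for a genuine projection $P\in E$, then $P$ is itself a self-adjoint lift and $\exp(2\pi\mathrm{i}\,P)=1-P+P=1$, so $\delta_0([p])=0$; since the same computation applies to projections over $A$ and $\mathrm{K}_0(A)$ is generated by such classes, $\delta_0\equiv 0$. For $(b)\Rightarrow(c)$, I would invoke the standard fact (Brown--Pedersen) that real rank zero forces projections in any quotient to lift to projections: given a self-adjoint lift $a$ with $0\le a\le 1$, the spectrum of $\pi(a)$ omits $1/2$, and real rank zero of $E$ lets one perturb $a$ to a self-adjoint element with a spectral gap at $1/2$, whose spectral projection lifts $p$.

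The substance is in the two remaining implications. For $(c)\Rightarrow(b)$ I would run a gluing argument: to approximate a self-adjoint $x\in E$ by one of finite spectrum, use $rr(A)=0$ to approximate $\pi(x)$ by a real linear combination of mutually orthogonal projections in $A$, lift these to mutually orthogonal projections in $E$ using (c), subtract off the corresponding combination, and use $rr(B)=0$ to approximate the remaining self-adjoint element, which lies in $B$, by one of finite spectrum; assembling these yields the desired approximation. For $(a)\Rightarrow(c)$ I would argue that the vanishing of $\delta_0([p])=[\exp(2\pi\mathrm{i}\,a)]_{\mathrm{K}_1}$ removes the only obstruction to improving the self-adjoint lift $a$ to a projection: using $rr(B)=0$ one perturbs $a$ within $B$ so as to create a spectral gap at $1/2$, the exponential class being precisely the winding obstruction to doing so, after which the spectral projection provides the lift.

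I expect $(a)\Rightarrow(c)$ to be the main obstacle, since it is where the vanishing of a $\mathrm{K}_1(B)$-class must be converted into an honest projection in $E$; here the hypothesis $rr(B)=0$ is essential, both to discretize the homotopy trivializing $\exp(2\pi\mathrm{i}\,a)$ and to keep the construction at the level of $B$ itself rather than passing to a matrix amplification. Finally I would note that real rank zero is matrix-stable, with $rr(M_n(E))=rr(E)$, and that the amplified extensions $0\to M_n(B)\to M_n(E)\to M_n(A)\to 0$ satisfy the same hypotheses, so the single-algebra equivalences upgrade automatically to the matrix-level statements needed to conclude $\delta_0\equiv 0$ on all of $\mathrm{K}_0(A)$.
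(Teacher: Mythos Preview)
The paper does not supply a proof of this proposition: it is quoted verbatim from Lin--R\o rdam \cite[Proposition~4]{LR} and used as a black box throughout. There is therefore nothing in the present paper to compare your argument against.

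Your outline is broadly the right shape, and the identifications $(c)\Rightarrow(a)$, $(b)\Rightarrow(c)$, and $(c)\Rightarrow(b)$ are standard and correctly sketched. Two points deserve tightening. First, in $(c)\Rightarrow(b)$ you need to lift a \emph{mutually orthogonal} family of projections, whereas (c) only promises lifts of individual projections; the usual fix is to lift them one at a time inside successive corners, which requires a short argument (or an appeal to the fact that (c) passes to hereditary subalgebras of $E$ mapping onto hereditary subalgebras of $A$). Second, your $(a)\Rightarrow(c)$ paragraph is the right heuristic but is not yet a proof: the step ``vanishing of $[\exp(2\pi i a)]$ in $\mathrm{K}_1(B)$ lets one perturb $a$ inside $B$ to create a spectral gap'' needs the additional input that for $rr(B)=0$ the canonical map $U(\widetilde{B})/U_0(\widetilde{B})\to \mathrm{K}_1(B)$ is an isomorphism (so the exponential is homotopic to $1$ through unitaries in $\widetilde{B}$, not merely in some matrix amplification), followed by a genuine construction that converts this homotopy into a self-adjoint perturbation of $a$ with a gap at $1/2$. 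That construction is the real content of the Lin--R\o rdam argument, and your sketch does not yet indicate how it goes.
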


\begin{definition}\rm
We say an extension $e:0\to B\to E\to A\to 0$ has {\it  trivial boundary maps}, if
both boundary maps $\delta_0$ and $\delta_1$ are  zero.
\end{definition}

\begin{definition}\label{def ext}\rm
Let $A$, $B$ be ${\rm C}^*$-algebras and
$e\,:\, 0 \to B \to E \xrightarrow{\pi} A \to 0 $
be an extension of $A$ by $B$ with Busby invariant $\tau:\,A\to {\mathcal M}(B)/B$.

We say the extension $e$ is {\it essential}, if $\tau$ is injective.
We say $e$ is {\it full}, if for every
nonzero $a \in A$, $\tau (a)$ generates all of ${\mathcal M}(B)/B$ as a two-sided, closed ideal.
When $A$ is unital, we say  $e$ is {\it unital}, if $\tau$ is unital;
we say a unital extension $e$ is {\it unital trivial}, if there is
a unital $*$-homomorphism $\rho\, : A \to E$ such that $\pi\circ\rho = id_A$.

Note that if $A$ is unital simple and $B\neq 0$, any unital extension $e$ of $A$ by $B$ is essential and full.
\end{definition}
\begin{definition}\rm \label{strong tui cong}
Let
$e_i\,:\, 0 \to B \to E_i \to A \to 0 $
be two
extensions with Busby invariants $\tau_i$ for $i = 1, 2$. Then $e_1$ and $e_2$ are called {\it strongly unitarily equivalent}, denoted by $e_1
\sim_s e_2$,
if there exists a unitary $u \in {\mathcal M}(B)$ such that $\tau_2(a) = \pi(u)\tau_1(a)\pi(u)^*$ for all $a \in A$,
where $\pi:\,{\mathcal M}(B)\to {\mathcal M}(B)/B.$ It is well-known that $e_1\sim_s e_2$ implies $E_1\cong E_2$.

Assume that $B$ is stable. Fix an isomorphism of $\mathcal{K}$ with $M_2(\mathcal{K})$; this isomorphism induces an isomorphism $B \cong M_2(B)$ and hence isomorphisms ${\mathcal M}(B) \cong M_2({\mathcal M}(B))$ and  ${\mathcal M}(B)/B \cong M_2({\mathcal M}(B)/B)$. These isomorphisms are uniquely determined up to unitary equivalence.

Let $s_1,s_2$ be two isometries in ${\mathcal M}(B\otimes\mathcal{K})$ with $s_1s_1^*+s_2s_2^*=1$.
Define the addition $e_1 \oplus e_2$ to be the extension of $A$ by $B$ with Busby invariant
$$
\tau_1\oplus\tau_2:=\pi(s_1)\tau_1\pi(s_1)^*+\pi(s_2)\tau_2\pi(s_2)^*.
$$

We say that a unital extension $e$ with Busby invariant $\tau$ is {\it absorbing} if it is strongly unitarily equivalent to its sum with any  unital trivial extension, i.e.,  $e \sim_s e\oplus \sigma$ for any unital trivial extension $\sigma$.

\end{definition}



\begin{definition}\rm

Two unital extensions $e_1$ and $e_2$ are called {\it stably strongly unitarily equivalent}, denoted by $e_1
\sim_{ss} e_2$, if there exist unital trivial extensions $\sigma_1,\sigma_2$  such that $e_1\oplus\sigma_1 \sim_s e_2\oplus\sigma_2$.

If $A$ is unital,  denote  by $\mathrm{Ext}^u_{ss}(A, B)$ the set of stably strongly unitary equivalence classes of unital extensions of $A$ by $B$.
If $e_1,e_2$ are absorbing, then $e_1\sim_{ss}e_2$ coincides  with $e_1\sim_s e_2$.
\end{definition}

Denote by $\mathcal{N}$ the ``bootstrap'' category of \cite{RS}. We will  need the following ``UUCT" result, one can  see analogous versions under the circumstance in \cite{Sk1,Sk2}; see also \cite[Theorem 4.14]{GR} for the naturality.
\begin{theorem}{\rm (}\cite[Theorem 4.9]{W}{\rm )}\label{strong wei}
Suppose that $A$ is a unital separable nuclear $C^*$-algebra with $A\in\mathcal{N}$ and $B$ is separable stable $C^*$-algebra and has a countable approximate unit consisting of projections. Then
there is a short exact sequence of groups
$$
0 \to \mathrm{Ext}_{[1]}(\mathrm{K}_*(A), \mathrm{K}_*(B)) \to \mathrm{Ext}_{ss}^u(A, B) \to {\rm Hom}_{[1]}(\mathrm{K}_*(A), \mathrm{K}_*(B)) \to 0.
$$
\end{theorem}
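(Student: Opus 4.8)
The plan is to compute $\mathrm{Ext}_{ss}^u(A,B)$ by first passing to Kasparov's bivariant theory and then invoking the Rosenberg--Schochet UCT. Since $A$ is separable and nuclear, every unital extension $e:0\to B\to E\to A\to 0$ is semisplit and hence determines a class in $\mathrm{KK}^1(A,B)=\mathrm{KK}(A,SB)$, where $SB$ denotes the suspension of $B$. Because $B$ is stable and $\sigma_{\rm p}$-unital, the Cuntz sum of Definition \ref{strong tui cong} is available and makes the extensions into a semigroup in which the unital trivial extensions serve as the stable neutral elements. The first step is to invoke the generalized Brown--Douglas--Fillmore / Kasparov absorption theory---in the form of the Elliott--Kucerovsky purely-large criterion, which applies since $B$ is stable and $\sigma_{\rm p}$-unital---to show that on absorbing representatives the relation $\sim_{ss}$ coincides with $\sim_s$. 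This makes $\mathrm{Ext}_{ss}^u(A,B)$ a well-defined abelian group whose non-unital analogue $\mathrm{Ext}_{ss}(A,B)$ (classes of not-necessarily-unital extensions) is naturally isomorphic to $\mathrm{KK}^1(A,B)$.

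Next I would pin down the right-hand map as the index (boundary) invariant. To $e$ I associate the pair $\delta_0:\mathrm{K}_0(A)\to \mathrm{K}_1(B)$ and $\delta_1:\mathrm{K}_1(A)\to \mathrm{K}_0(B)$ from the six-term sequence of $e$. Because $e$ is unital, $[1_A]$ lifts to the projection $1_E$, so $\delta_0([1_A])=0$; this is exactly the condition defining $\mathrm{Hom}_{[1]}(\mathrm{K}_0(A),\mathrm{K}_1(B))$, and therefore the invariant lands in $\mathrm{Hom}_{[1]}(\mathrm{K}_*(A),\mathrm{K}_*(B))$. Under the identifications $\mathrm{K}_i(SB)\cong \mathrm{K}_{i+1}(B)$, this map is precisely the $\mathrm{Hom}$-part of the UCT applied to $\mathrm{KK}(A,SB)$; its surjectivity onto $\mathrm{Hom}_{[1]}$ then follows from surjectivity in the UCT together with a realization argument: any admissible pair $(\delta_0,\delta_1)$ subject only to $\delta_0([1_A])=0$ is realized by an honest unital extension.

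For the kernel I would show that a unital extension with trivial boundary maps is classified by the based $\mathrm{Ext}$ group. The UCT applied to $\mathrm{KK}(A,SB)$ identifies the kernel of the $\mathrm{Hom}$-map with the non-unital term
\[
\mathrm{Ext}^1_{\mathbb{Z}}(\mathrm{K}_0(A),\mathrm{K}_0(B))\oplus \mathrm{Ext}^1_{\mathbb{Z}}(\mathrm{K}_1(A),\mathrm{K}_1(B)).
\]
The unital refinement replaces the first summand by $\mathrm{Ext}((\mathrm{K}_0(A),[1_A]),\mathrm{K}_0(B))$; the extra contribution $\mathrm{K}_0(B)/\{f([1_A]):f\in\mathrm{Hom}(\mathrm{K}_0(A),\mathrm{K}_0(B))\}$ appearing in the defining exact sequence of the based $\mathrm{Ext}$ group records the freedom in the $\mathrm{K}_0$-class to which $1_A$ is lifted in $\mathrm{K}_0(E)$. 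I would make this precise with a commutative ladder comparing the unital sequence to the pure UCT sequence for $\mathrm{KK}^1(A,B)$, with vertical maps given by forgetting unitality and by the algebraic based-$\mathrm{Ext}$ sequence, and then conclude exactness by a diagram chase and the five lemma.

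The hard part will be the bookkeeping of the unital/based structure in this last step: verifying that the $\mathrm{K}_0(B)$-torsor of unit lifts matches exactly the gap between $\mathrm{Ext}(\mathrm{K}_0(A),\mathrm{K}_0(B))$ and its based version, and that the entire assignment is well defined and natural on $\sim_{ss}$-classes rather than merely on $\sim_s$-classes. This is where the absorption theorem does the real work, guaranteeing that adjoining unital trivial extensions neither creates nor destroys classes and is compatible with the group law inherited from $\mathrm{KK}^1$. Once this compatibility and the base-point accounting are established, the UCT for $\mathrm{KK}(A,SB)$ enters as a black box and the stated short exact sequence follows.
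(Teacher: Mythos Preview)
The paper does not prove this statement at all: Theorem~\ref{strong wei} is quoted directly from \cite[Theorem 4.9]{W} and is used as a black box in the proof of the main classification result. There is therefore no argument in the paper to compare your proposal against.

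That said, your outline is a reasonable sketch of how such a ``UUCT'' is established in the literature (Skandalis, Wei, Gabe--Ruiz): identify the non-unital $\mathrm{Ext}$-group with $\mathrm{KK}^1(A,B)$ via absorption, apply the Rosenberg--Schochet UCT, and then track the base-point correction coming from the requirement that $[1_A]$ lift to $[1_E]$. One caution: you invoke the Elliott--Kucerovsky purely-large criterion to get absorption for \emph{arbitrary} unital extensions, but that criterion requires fullness (or the corona factorization property on $B$), neither of which is among the hypotheses here. In the actual proofs of this type of result one does not need every extension to be absorbing; one only needs enough absorbing representatives in each $\sim_{ss}$-class (this is what stabilizing by unital trivial extensions buys you). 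So the ``hard part'' you flag is real, but the fix is not a global absorption theorem---it is the observation that $\sim_{ss}$ is defined precisely so that every class contains an absorbing representative.
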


\section{Latticed total K-theory}

\subsection{Properties of $V(A)$}
\begin{definition}\rm
 Let $A$ be a C*-algebra. We say a projection $p$ in $A$ is {\it infinite} if it is equivalent to a proper sub-projection of itself, and $p$ is {\it finite} otherwise. If $p$ has mutually orthogonal sub-projections $q_1$ and $q_2$ such that $p \sim_{\rm Mv} q_1 \sim_{\rm Mv} q_2$, then $p$ is said to be {\it properly infinite}. Equivalently, $p$ is properly infinite if $p$ is nonzero and $p \oplus p \lesssim_{\rm Mv} p$. 
A C*-algebra is infinite if it contains a  infinite projection.
\end{definition}

\begin{definition}\rm
We say $S$ is a {\it pre-ordered monoid} if $S$ is a monoid endowed with a pre-order relation $\leq$ which is compatible with the addition on $S$,
i.e., that $s_i\leq t_i$, $i=1,2,$ implies $s_1+s_2\leq t_1+t_2$.
Note that $s\leq t$ and $t\leq s$ do not necessarily imply $s=t$ in $S$.
Let $S,T$ be two pre-ordered monoids, we say $\alpha:S\to T$ is a {\it pre-order preserving monoid morphism} if $\alpha$ is additive and preserves the pre-order. 
\end{definition}

\begin{remark}\rm
For any ${\rm C}^{*}$-algebra $A$,
$V(A)$ is a pre-ordered monoid. 
If $A$ has an approximate unit consisting of projections (see \cite{Rbook}), we have
$$
{\rm Gr}(V(A))=\mathrm{K}_0(A),\quad \rho(V(A))=\mathrm{K}_0^+(A),
$$
where ${\rm Gr}(\cdot)$ is the Grothendieck construction and $\rho:\, V(A)\to \mathrm{K}_0(A)$ with $\rho([p]_{\rm Mv})=[p]_{{\rm K}_0}$.

As an example, we consider the Cuntz algebra $\mathcal{O}_4$ (see \cite{Cu alg}),
$$V(\mathcal{O}_4)=\{0\}\sqcup \mathbb{Z}_3,$$ and for $\bar{1},\bar{2}\in V(\mathcal{O}_4)$,
one may find that $\bar{1}\leq \bar{2}$ and $\bar{2}\leq \bar{1}$, but $\bar{1}\neq \bar{2}$. That is, $V(\mathcal{O}_4)$
is just a pre-ordered monoid, not an ordered monoid. (If $A$ has cancellation of projections, $V(A)$
is an ordered monoid.)
\end{remark}
In this paper, we will mainly consider $V(A)$ instead of $\mathrm{K}_0^+(A)$,
as $V(\cdot)$ does distinguish
if $A$ is infinite or has cancellation of projections.
\begin{proposition}\label{cancel or infinite}
Let  $A$ be a stable $C^*$-algebra with an approximate unit consisting of projections. Then

(i) If $A$ is infinite, then $V(A)$ doesn't have cancellation;

(ii) If  $A$ is of stable rank one, then $V(A)$ has cancellation.
\end{proposition}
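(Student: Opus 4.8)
The plan is to handle the two parts by opposite mechanisms, using throughout that $A$ is stable so that every element of $V(A)$ is represented by a genuine projection in $A$.

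For (i) I would simply exhibit one relation witnessing the failure of cancellation. Since $A$ is infinite it contains an infinite projection $p\in\mathcal{P}(A)$, so by definition $p\sim_{\rm Mv} q$ for a proper subprojection $q\leq p$ with $q\neq p$. Put $r=p-q$, a nonzero projection with $qr=0$ and $q+r=p$. In $V(A)$ this gives $[q]_{\rm Mv}+[r]_{\rm Mv}=[p]_{\rm Mv}=[q]_{\rm Mv}$, that is $[q]_{\rm Mv}+[r]_{\rm Mv}=[q]_{\rm Mv}+0$. Were $V(A)$ cancellative we could cancel $[q]_{\rm Mv}$ to obtain $[r]_{\rm Mv}=0$, hence $r\sim_{\rm Mv}0$ and $r=0$, contradicting $r\neq 0$ (a nonzero projection is never Murray--von Neumann equivalent to $0$). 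This part is immediate once the right equation is isolated.

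For (ii) the strategy is to reduce cancellation of the monoid $V(A)$ to cancellation of projections in $A$, and to derive the latter from stable rank one. As $A$ is stable it is nonunital, so by the definition of stable rank one adopted here $sr(\widetilde{A})=1$; thus $\widetilde{A}$ is a unital $C^*$-algebra of stable rank one and hence has cancellation of projections by the result recalled in Section~2. I would then observe that Murray--von Neumann equivalence of projections lying in $A$ is computed the same way in $A$ as in $\widetilde{A}$: if $x\in\widetilde{A}$ implements $p\sim_{\rm Mv} q$ with $p,q\in A$, then $x=qx=xp=qxp\in A$, since $A$ is an ideal of $\widetilde{A}$. Consequently $A$ itself has cancellation of projections.

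It remains to pass from cancellation of projections to cancellation in $V(A)$. Given $a+c=b+c$ in $V(A)$, I would use stability of $A$ (a pair of isometries $s_1,s_2\in\mathcal{M}(A)$ with orthogonal ranges) to represent $a,b,c$ by projections and to replace the copies of $c$ by mutually orthogonal projections $r_1,r_2$ with $r_1\sim_{\rm Mv} r_2$, $pr_1=0$ and $qr_2=0$, where $a=[p]_{\rm Mv}$ and $b=[q]_{\rm Mv}$. The hypothesis then becomes $p+r_1\sim_{\rm Mv} q+r_2$, and cancellation of projections yields $p\sim_{\rm Mv} q$, i.e.\ $a=b$. The only delicate point, and the one I would treat most carefully, is the nonunital bookkeeping in (ii): ensuring stable rank one transfers to $\widetilde{A}$ and that equivalence of projections in $A$ is detected inside $\widetilde{A}$, so that the unital cancellation result may legitimately be invoked; everything else is routine.
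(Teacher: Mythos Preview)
Your argument for (i) is exactly the paper's: from an infinite projection $p$ with proper subprojection $p'\sim_{\rm Mv} p$ one gets $[p]_{\rm Mv}+[p-p']_{\rm Mv}=[p]_{\rm Mv}$, and cancellation would force $[p-p']_{\rm Mv}=0$, a contradiction. The paper in fact proves only (i) and leaves (ii) implicit as standard; your treatment of (ii), reducing to cancellation of projections in $\widetilde{A}$ via stable rank one and then using stability to realize sums as orthogonal sums, is a correct and natural way to supply the omitted details.
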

\begin{proof}
We only show (i): Suppose  $V(A)$  has cancellation.
Since $A$ contains an infinite projection $p$, i.e., there exists a sub-projection
$p'$ of $p$ such that $p'\sim_{\rm Mv} p$, then we have $[p]_{\rm Mv}+[p-p']_{\rm Mv}=[p]_{\rm Mv}$ which implies $[p-p']_{\rm Mv}=0$ with $p-p'\neq 0$. This forms a contradiction.

\end{proof}

\begin{proposition}\label{V-injective}
Let $A$ be a $C^{*}$-algebra and suppose $I$ is an ideal of $A$. 
Then the induced map $V(\iota_{IA}):\,V(I)\to V(A)$ is an injective pre-order preserving monoid morphism.
\end{proposition}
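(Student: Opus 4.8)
The plan is to verify the three required properties---additivity, pre-order preservation, and injectivity---in turn, with injectivity being the only step that needs real work. Throughout I would identify $I\otimes\mathcal{K}$ with the closed two-sided ideal of $A\otimes\mathcal{K}$ arising as the kernel of the quotient map $A\otimes\mathcal{K}\to(A/I)\otimes\mathcal{K}$, so that $\iota_{IA}\otimes{\rm id}$ is simply the inclusion and $V(\iota_{IA})$ sends the class $[p]_{\rm Mv}$ of a projection $p\in\mathcal{P}(I\otimes\mathcal{K})$ to the class of the same $p$ regarded in $A\otimes\mathcal{K}$.

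For additivity I would just invoke functoriality of $V(\cdot)$: since $\iota_{IA}$ is a $*$-homomorphism, $V(\iota_{IA})$ respects the orthogonal-sum addition and sends $[0]_{\rm Mv}$ to $[0]_{\rm Mv}$. For pre-order preservation, I would start from $[p]_{\rm Mv}\leq[q]_{\rm Mv}$ in $V(I)$, i.e. $p\lesssim_{\rm Mv}q$ inside $I\otimes\mathcal{K}$, and note that a subprojection $q'\leq q$ in $I\otimes\mathcal{K}$ with $p\sim_{\rm Mv}q'$ remains a subprojection of $q$ inside $A\otimes\mathcal{K}$; hence $p\lesssim_{\rm Mv}q$ there, and $V(\iota_{IA})$ preserves $\leq$.

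The crux is injectivity, where the point is that an equivalence realized in the ambient algebra can be pushed into the ideal. I would take $p,q\in\mathcal{P}(I\otimes\mathcal{K})$ with $V(\iota_{IA})([p]_{\rm Mv})=V(\iota_{IA})([q]_{\rm Mv})$, i.e. $p\sim_{\rm Mv}q$ in $A\otimes\mathcal{K}$, and fix $x\in A\otimes\mathcal{K}$ with $x^*x=p$ and $xx^*=q$. The key observation is the cut-down identity $x=qx$: since $x^*qx=x^*(xx^*)x=(x^*x)^2=p=x^*x$ and $q^2=q$, one computes $(x-qx)^*(x-qx)=x^*x-x^*qx=0$. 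Because $I\otimes\mathcal{K}$ is an ideal and $q\in I\otimes\mathcal{K}$, the product $qx$---and therefore $x=qx$ itself---lies in $I\otimes\mathcal{K}$. Thus $x$ implements $p\sim_{\rm Mv}q$ already within $I\otimes\mathcal{K}$, which gives $[p]_{\rm Mv}=[q]_{\rm Mv}$ in $V(I)$.

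I expect the only genuinely delicate point to be this last realization step. Its whole content is the algebraic identity $x=qx$ combined with the absorbing property of the ideal $I\otimes\mathcal{K}$; no approximation, functional calculus, or appeal to real rank zero or cancellation is needed, which is precisely what makes the statement valid for an arbitrary C*-algebra $A$ and an arbitrary ideal $I$.
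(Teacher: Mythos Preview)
Your proof is correct and follows essentially the same approach as the paper's. The paper declares additivity and pre-order preservation obvious, and for injectivity it takes $v\in A\otimes\mathcal{K}$ with $vv^*=p$, $v^*v=q$ and observes that $pv\in I\otimes\mathcal{K}$ with $(pv)(pv)^*=p$ and $(pv)^*(pv)=q$; this is exactly your cut-down by the range projection, just without explicitly writing the identity $v=pv$.
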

\begin{proof}
It is obvious that $V(\iota_{IA})$ is a well-defined pre-order preserving monoid morphism.
We will show the injectivity.
For any projections $p,q$  in $I\otimes \mathcal{K}$, if $p\sim_{\rm Mv} q$  in $A\otimes \mathcal{K}$,
that is, there exists a $v\in A\otimes \mathcal{K}$ such that
$vv^*=p$ and $v^*v=q.$ Then we have $pv\in I\otimes \mathcal{K}$ and $(pv)(pv)^*=p$ and $(pv)^*(pv)=q$, hence, $p\sim_{\rm Mv} q$  in $I\otimes \mathcal{K}$.

\end{proof}
Let $I,J$ be ideals of $A$ with $I\subset J$. Thus, $V(\iota_{IJ}):\,V(I)\to V(J)$ is injective. 
We will always identify $V(I)$ as a subset of $V(J)$
through
$V(\iota_{IJ})$.

\subsection{Cuntz's KK picture}

\begin{notion}\label{cutnz new}\rm
(\cite{Cu1}) For each ${\rm C}^{*}$-algebra $A$, Cuntz defined the ${\rm C}^{*}$-algebra $Q A=A * A$ as the free product of $A$ by itself. $Q A$ is generated as a ${\rm C}^{*}$-algebra by $\{\iota(a), \bar{\iota}(a): a \in A\}$ with respect to the largest ${\rm C}^*$-norm, where $\iota$ and $\bar{\iota}$ denote the inclusions of two copies of $A$ into the free product. He also defined $q A$ as the ideal of $Q A$ generated by the differences $\{\iota(a)-\bar{\iota}(a)\mid a \in A\}$ and showed that for any  ${\rm C}^*$-algebras $A, B$,
$$
{\rm  KK}(A,B)=[qA,B\otimes \mathcal{K}],
$$
where $[qA,B\otimes \mathcal{K}]$ consists of all the homotopy classes of homomorphisms from $qA$ to $B\otimes \mathcal{K}$.

Let $\psi:\,A\to B$ be a homomorphism, denote by $SA$  the suspension algebra of $A$, i.e., $A\otimes C_0(0,1)$ and set $S\psi=\psi\otimes {\rm id}_{C_0(0,1)}.$ Then we have
 \begin{eqnarray*}
   {\rm K}_*(A;\mathbb{Z}_n) & = & {\rm K}_0(A;\mathbb{Z}_n)\oplus {\rm K}_1(A;\mathbb{Z}_n)  \\
     &= & {\rm  KK}(\mathbb{C},A\otimes  C_0(W_n))\oplus {\rm  KK}(\mathbb{C},SA\otimes  C_0(W_n))\\
     &= & [q\mathbb{C},A\otimes  C_0(W_n)\otimes\mathcal{K}]\oplus [q\mathbb{C},SA\otimes  C_0(W_n)\otimes\mathcal{K}].
  \end{eqnarray*}
\end{notion}
\begin{notion}\rm \label{theta and delta}
Let $A$ be a separable $\sigma_{\rm p}$-unital  ${\rm C}^*$-algebra and let $I$ and $J$ be ideals of $A\otimes \mathcal{K}$ such that $I\subset J$.
Denote by
$\delta_{IJ}$ the map ${\rm \underline{ K}}(\iota_{IJ})
: \underline{\mathrm{K}}(I)\to \underline{\mathrm{K}}(J)$, i.e.,
\begin{align*}
   & \delta_{IJ}([e],[{u}],\oplus_{n=1}^{\infty}([{s}_{n,0}],[{s}_{n,1}])) \\
   =& ({\rm K}_0(\iota_{IJ})([e]) ,{\rm K}_1(\iota_{IJ})([{u}]),\oplus_{n=1}^{\infty}
({\rm K}_0(\iota_{IJ};\mathbb{Z}_n)([{s}_{n,0}]),{\rm K}_1(\iota_{IJ};\mathbb{Z}_n)([{s}_{n,1}])))\\
=&([\iota_{IJ}(e)],[\iota^\sim_{IJ}(u)], \oplus_{n=1}^{\infty}
([(\iota_{IJ}\otimes {\rm id}_{C_0(W_n)})\circ s_{n,0}],[(S\iota_{IJ}\otimes {\rm id}_{C_0(W_n)})\circ s_{n,1}]))
\end{align*}
where $\iota^\sim_{IJ}$ is the unitized morphism of $\iota_{IJ}$, the maps
$${\rm K}_0(\iota_{IJ};\mathbb{Z}_n): {\rm K}_0(I;\mathbb{Z}_n)\rightarrow {\rm K}_0(J;\mathbb{Z}_n),\,\,{\rm K}_1(\iota_{IJ};\mathbb{Z}_n): {\rm K}_1(I;\mathbb{Z}_n)\rightarrow {\rm K}_1(J;\mathbb{Z}_n)$$
are induced by $\iota_{IJ}$, $[e]\in {\rm K}^+_0(I)$, $[{u}]\in {\rm K}_1(I)$, $[s_{n,0}]\in[q\mathbb{C},I\otimes  C_0(W_n)]$ and
$s_{n,1}\in [q\mathbb{C},SI\otimes  C_0(W_n)]$.

We will always identify
${\rm K}_1(I)\times\bigoplus_{n=1}^{\infty} {\rm K}_* (I; \mathbb{Z}_n)$ with its natural image in
${\rm \underline{K}}(I)=\bigoplus_{n=0}^{\infty} {\rm K}_{*}(I ; \mathbb{Z}_n)$, i.e.,
each $([{u}],\oplus_{n=1}^{\infty}([{s}_{n,0}],[{s}_{n,1}]))$ is identified with $(0,[{u}],\oplus_{n=1}^{\infty}([{s}_{n,0}],[{s}_{n,1}]))\in {\rm \underline{K}}(I)$.
Then the restriction of $\delta_{IJ}$ also induces a map
${\rm K}_1(I)\times\bigoplus_{n=1}^{\infty} {\rm K}_* (I; \mathbb{Z}_n)\to{\rm K}_1(J)\times\bigoplus_{n=1}^{\infty} {\rm K}_* (J; \mathbb{Z}_n)$, which is  still denoted by $\delta_{IJ}$.

\end{notion}

\subsection{Latticed total K-theory}

\begin{definition}\rm

Let $A$ be a separable $\sigma_{\rm p}$-unital C*-algebra and  $p,e\in \mathcal{P}(A\otimes\mathcal{K})$. 
Let $u,v$ be unitary elements of $I_p^\sim$ and $I_e^\sim$, respectively, and
let
$$s_{n,0}:\,q\mathbb{C}\to I_p\otimes C_0(W_n),\quad
t_{n,0}:\,q\mathbb{C}\to I_e\otimes C_0(W_n)$$
and
$$s_{n,1}:\,q\mathbb{C}\to SI_p\otimes C_0(W_n),\quad t_{n,1}:\,q\mathbb{C}\to SI_e\otimes C_0(W_n)$$ be homomorphisms for $n=1,2,\cdots$.

We say $(p,u,\oplus_{n=1}^{\infty}(s_{n,0},s_{n,1}))\lesssim_T(e,v,\oplus_{n=1}^{\infty}(t_{n,0},t_{n,1}))$,
if
$$
p\lesssim_{\rm Mv} e,\quad [\iota_{I_pI_e}^\sim(u)]=[v]\in {\rm K}_1(I_e),
$$
$$
[(\iota_{I_pI_e}\otimes {\rm id}_{C_0(W_n)})\circ s_{n,0})]=[t_{n,0}]\in {\rm K}_0(I_e;\mathbb{Z}_n)$$and
$$
[(S\iota_{I_pI_e}\otimes {\rm id}_{C_0(W_n)})\circ s_{n,1}]=[t_{n,1}]\in {\rm K}_1(I_e;\mathbb{Z}_n),\quad n=1,2,\cdots,
$$
which is equivalent to
$$
p\lesssim_{\rm Mv} e,\quad \delta_{I_pI_e}([u],\oplus_{n=1}^{\infty}([s_{n,0}],[s_{n,1}]))=([v],\oplus_{n=1}^{\infty}([t_{n,0}],[t_{n,1}])),
$$
where $\iota_{I_pI_e}$ (see \ref{theta and delta}) is the canonical inclusion from $I_p$ to $I_e$. ($\iota_{I_pI_e}$ induces the map $\delta_{I_pI_e}$, i.e.,
${\rm \underline{K}}(\iota_{I_pI_e})=\delta_{I_pI_e}$.)

\end{definition}
It is not hard to check that the relation $\lesssim_T$  is reflexive and transitive.

\begin{definition}\label{natural def ltk}
\rm
Let $A$ be a separable $\sigma_{\rm p}$-unital  ${\rm C}^*$-algebra.
Denote $\mathfrak{V}(A)$ the set of all the tuples $(p,u,\oplus_{n=1}^{\infty}(s_{n,0},s_{n,1}))$, where $p\in \mathcal{P}(A\otimes\mathcal{K})$, $u\in \mathcal{U}(I_p^\sim)$, $s_{n,0}\in {\rm Hom}(q\mathbb{C}, I_p\otimes C_0(W_n))$ and  $s_{n,1}\in {\rm Hom}(q\mathbb{C}, SI_p\otimes C_0(W_n))$.

Now we define the equivalent relation $\sim_T$ on $\mathfrak{V}(A)$ (called the total Murray--von Neumann equivalence) as follows.

Given $[(p,u,\oplus_{n=1}^{\infty}(s_{n,0},s_{n,1}))],[(e,v,\oplus_{n=1}^{\infty}(t_{n,0},t_{n,1}))]\in \underline{V}(A)$, we say
$$
(p,u,\oplus_{n=1}^{\infty}(s_{n,0},s_{n,1}))\sim_T(e,v,\oplus_{n=1}^{\infty}(t_{n,0},t_{n,1})),
$$
if
$$
p\sim_{\rm Mv} e\quad{ \rm and}\quad ([u],\oplus_{n=1}^{\infty}([s_{n,0}],[s_{n,1}]))=([v],\oplus_{n=1}^{\infty}([t_{n,0}],[t_{n,1}])).
$$
Denote by $[(p,u,\oplus_{n=1}^{\infty}(s_{n,0},s_{n,1}))]$ the equivalent class of $(p,u,\oplus_{n=1}^{\infty}(s_{n,0},s_{n,1}))$. We construct the latticed total K-theory of $A$ as follows:
$$
\underline{V}(A):=\mathfrak{V}(A)/\sim_T=\{[(p,u,\oplus_{n=1}^{\infty}(s_{n,0},s_{n,1}))]\mid (p,u,\oplus_{n=1}^{\infty}(s_{n,0},s_{n,1}))\in \mathfrak{V}(A)\}.
$$

For any two $[(p,u,\oplus_{n=1}^{\infty}(s_{n,0},s_{n,1}))],[(e,v,\oplus_{n=1}^{\infty}(t_{n,0},t_{n,1}))]\in \underline{V}(A)$, we say $$[(p,u,\oplus_{n=1}^{\infty}(s_{n,0},s_{n,1}))]\leq [(e,v,\oplus_{n=1}^{\infty}(t_{n,0},t_{n,1}))],$$ if
$$(p,u,\oplus_{n=1}^{\infty}(s_{n,0},s_{n,1}))\lesssim_T (e,v,\oplus_{n=1}^{\infty}(t_{n,0},t_{n,1})).$$

The following addition is well-defined on  $\underline{V}(A)$:
$$
[(p,u,\oplus_{n=1}^{\infty}(s_{n,0},s_{n,1}))]+[(e,v,\oplus_{n=1}^{\infty}(t_{n,0},t_{n,1}))]
$$$$
:=[(p\oplus e,u\oplus v,\oplus_{n=1}^{\infty}(\underline{s_{n,0}+t_{n,0}},\overline{t_{n,0}+t_{n,1}})],
$$
where $$\underline{s_{n,0}+t_{n,0}}=((\iota_{I_pI_{p\oplus e}}\otimes {\rm id}_{C_0(W_n)})\circ s_{n,0})\oplus ((\iota_{I_eI_{p\oplus e}}\otimes {\rm id}_{C_0(W_n)})\circ t_{n,0})$$ and
$$\overline{s_{n,1}+t_{n,1}}=((S\iota_{I_pI_{p\oplus e}}\otimes {\rm id}_{C_0(W_n)})\circ s_{n,1}) \oplus ((S\iota_{I_eI_{p\oplus e}}\otimes {\rm id}_{C_0(W_n)})\circ t_{n,1}).$$
For convenience, we point out that we have
$$
[(p,u,\oplus_{n=1}^{\infty}(s_{n,0},s_{n,1}))]+[(e,v,\oplus_{n=1}^{\infty}(t_{n,0},t_{n,1}))]
=[(c,w,\oplus_{n=1}^{\infty}(r_{n,0},r_{n,1}))]\in \underline{V}(A),$$ if and only if
$
p\oplus e\sim_{\rm Mv}c
$
and
$$
\delta_{I_p I_c}([u],\oplus_{n=1}^{\infty}([s_{n,0}],[s_{n,1}])) +\delta_{I_e I_c}([v],\oplus_{n=1}^{\infty}([t_{n,0}],[t_{n,1}]))
=([w],\oplus_{n=1}^{\infty}([r_{n,0}],[r_{n,1}])).
$$

Note that $[(0,1_{\mathbb{C}},\oplus_{n=1}^{\infty}(0,0))]$ is the neutral element and we obtain a pre-ordered monoid $\underline{V}(A)$.
Particularly, denoting
$$
\underline{V}(A)_+:=\{x\in \underline{V}(A)|\,[(0,1_{\mathbb{C}},\oplus_{n=1}^{\infty}(0,0))]\leq x \},
$$
we have $V(A)\cong \underline{V}(A)_+$ as pre-ordered monoid naturally and we will always identify them with each other.
\end{definition}

\begin{remark}\label{v total psi}
Let $A$ and $B$ be separable $\sigma_{\rm p}$-unital ${\rm C}^*$-algebras and let $\psi:\, A\to B$ be a $*$-homomorphism.
We still denote $\psi\otimes {\rm id}_\mathcal{K}$ by $\psi$. Let $p\in \mathcal{P}(A\otimes \mathcal{K})$ and $e\in \mathcal{P}(B\otimes \mathcal{K})$, suppose that $\psi(I_p)\subset I_e$, where $I_p$ and $I_e$ are ideals of $A\otimes \mathcal{K}$ and $B\otimes \mathcal{K}$ generated by $p$ and $e$, respectively.
Denote the restriction map $\psi|_{I_p\to I_e}:\,I_p\to I_e$ by $\psi_{I_p I_e}$ and  denote $\psi_{I_pI_{e}}^\sim$ the unitized morphism of $\psi_{I_p I_e}$.

For any $[(p,u,\oplus_{n=1}^{\infty}(s_{n,0},s_{n,1}))]\in \underline{V}(A)$, set $\underline{V}(\psi)$ for $\psi$ as follows,
$$
\underline{V}(\psi)([(p,u,\oplus_{n=1}^{\infty}(s_{n,0},s_{n,1}))])$$
$$=[(\psi(p),\psi_{I_pI_{\psi(p)}}^\sim(u),\oplus_{n=1}^{\infty}
( (\psi_{I_pI_{\psi(p)}}\otimes{\rm id}_{ C_0(W_n)})\circ s_{n,0},(S\psi_{I_pI_{\psi(p)}}\otimes{\rm id}_{ C_0(W_n)})\circ s_{n,1}))],
$$
where
$$\psi_{I_pI_{\psi(p)}}^\sim(u)\in I_{\psi(p)}^\sim,$$
$$(\psi_{I_pI_{\psi(p)}}\otimes{\rm id}_{ C_0(W_n)})\circ s_{n,0} \in {\rm Hom}( q\mathbb{C},I_{\psi(p)}\otimes C_0(W_n))$$
and
$$(S\psi_{I_pI_{\psi(p)}}\otimes{\rm id}_{ C_0(W_n)})\circ s_{n,1}\in {\rm Hom}( q\mathbb{C}, SI_{\psi(p)}\otimes C_0(W_n)).$$

\subsection{The functor $\underline{V}(\cdot)$}$\quad\\$
${\quad}$ It is easily seen that $\underline{V}(\psi)$ is a well-defined  monoid morphism. Here we only check $\underline{V}(\psi)$ is pre-order preserving:

For $[(p,{u},\oplus_{n=1}^{\infty}({s}_{n,0},{s}_{n,1}))]\leq [(e,{v},\oplus_{n=1}^{\infty}({t}_{n,0},{t}_{n,1}))]$,
we have
$$
p\lesssim_{\rm Mv} e,\quad [\iota_{I_pI_e}^\sim(u)]=[v]\in {\rm K}_1(I_e),
$$
$$
[(\iota_{I_pI_e}\otimes {\rm id}_{C_0(W_n)})\circ s_{n,0})]=[t_{n,0}]\in {\rm K}_0(I_e;\mathbb{Z}_n)$$and
$$
[(S\iota_{I_pI_e}\otimes {\rm id}_{C_0(W_n)})\circ s_{n,1}]=[t_{n,1}]\in {\rm K}_1(I_e;\mathbb{Z}_n),\quad n=1,2,\cdots,
$$
which is equivalent with
$$
p\lesssim_{\rm Mv} e,\quad \delta_{I_pI_e}([u],\oplus_{n=1}^{\infty}([s_{n,0}],[s_{n,1}]))=([v],\oplus_{n=1}^{\infty}([t_{n,0}],[t_{n,1}])).
$$
Then we have $\psi (p)\lesssim_{\rm Mv} \psi(e)$ and the following  diagram is commutative,
$$
\xymatrixcolsep{3pc}
\xymatrix{
{\,\,I_p\,\,} \ar[d]_-{\iota_{I_pI_e}} \ar[r]^-{\psi_{I_p I_{\psi(p)}}}
& {\,\,I_{\psi(p)}\,\,}  \ar[d]^-{\iota_{I_{\psi(p)}I_{\psi(e)}}}
\\
{\,\, I_e\,\,} \ar[r]^-{\psi_{I_e I_{\psi(e)}}}
& {\,\,I_{\psi(e)} \,\,}  }
$$
where $\psi_{I_p I_{\psi(p)}}$ is the restriction map of $\psi$ from $I_p$ to $I_{\psi(p)}$.
This implies that
$$
[\iota_{I_{\psi(p)}I_{\psi(e)}}^\sim \circ \psi_{I_p I_{\psi(p)}}^\sim(u)]
=[\psi_{I_e I_{\psi(e)}}^\sim\circ \iota_{I_{p}I_{e}}^\sim(u)]
=[\psi_{I_e I_{\psi(e)}}^\sim(v)]\in {\rm K}_1(I_{\psi(e)}),
$$
\begin{align*}
 &[((\iota_{I_{\psi(p)}I_{\psi(e)}}\circ\psi_{I_p I_{\psi(p)}})\otimes {\rm id}_{C_0(W_n)})\circ s_{n,0}] \\
 =& [((\psi_{I_e I_{\psi(e)}}\circ\iota_{I_{p}I_{e}})\otimes {\rm id}_{C_0(W_n)})\circ s_{n,0}] \\
   =& [(\psi_{I_e I_{\psi(e)}}\otimes {\rm id}_{C_0(W_n)})\circ t_{n,0}]\in {\rm K}_0(I_e;\mathbb{Z}_n)
\end{align*}
and
\begin{align*}
 & [((S\iota_{I_{\psi(p)}I_{\psi(e)}}\circ S\psi_{I_p I_{\psi(p)}})\otimes {\rm id}_{C_0(W_n)})\circ s_{n,1}]\\
 = & [((S\psi_{I_e I_{\psi(e)}}\circ S\iota_{I_{p}I_{e}})\otimes {\rm id}_{C_0(W_n)})\circ s_{n,1}] \\
   =& [(S\psi_{I_e I_{\psi(e)}}\otimes {\rm id}_{C_0(W_n)})\circ t_{n,1}]\in {\rm K}_1(I_e;\mathbb{Z}_n),
\end{align*}
where $n=1,2,\cdots$, which means that
$$
\underline{V}(\psi)([(p,u,\oplus_{n=1}^{\infty}(s_{n,0},s_{n,1}))])\leq \underline{V}(\psi)([(e,v,\oplus_{n=1}^{\infty}(t_{n,0},t_{n,1}))]).
$$
That is, $\underline{V}(\psi)$ is a pre-order preserving map.

Moreover, $\underline{V}(\cdot)$ is a functor from the category $C^*_{\sigma_{\rm p}}$ to the category  of pre-ordered monoids. 
\end{remark}

\subsection{Link with total K-theory}

\begin{theorem}\label{Gr V total}
If $A$ is a C*-algebra with  a countable approximate unit $(e_k)$ consisting of projections, we have a natural isomorphism
$$\alpha_A:\,(\mathrm{Gr}(\underline{V}(A)),\rho(\underline{V}(A)))\to (\underline{\mathrm{K}}(A),\underline{\mathrm{K}}(A)_+),$$
where $\rho:\,\underline{V}(A)\to \mathrm{Gr}(\underline{V}(A))$ with $\rho(x)=[(x,0)]_{\mathrm{Gr}}$.

Moreover, $\alpha_{(\cdot)}$ yields a natural equivalence between $\mathrm{Gr}(\underline{V}(\cdot))$ and $\underline{\mathrm{K}}(\cdot)$,
where $\mathrm{Gr}(\underline{V}(\cdot))$ and $\underline{\mathrm{K}}(\cdot)$ are regarded as functors from the category $C^*_{\sigma_{\rm p}}$
to the category of $\Lambda$-modules.
\end{theorem}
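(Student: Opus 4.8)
The plan is to build the isomorphism at the level of monoids first and then pass to the group completion. Given a representative $(p,u,\oplus_n(s_{n,0},s_{n,1}))$, I would define a map $\phi_A\colon \underline{V}(A)\to \underline{\mathrm{K}}(A)$ sending it to the element of $\underline{\mathrm{K}}(A)$ whose $\mathrm{K}_0$-component is $[p]_{\mathrm{K}_0}$ and whose higher components are $\delta_{I_p,\,A\otimes\mathcal{K}}([u],\oplus_n([s_{n,0}],[s_{n,1}]))$, the pushforward of the coefficient data along the inclusion $\iota_{I_p}\colon I_p\hookrightarrow A\otimes\mathcal{K}$ in the sense of \ref{theta and delta}. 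Well-definedness on $\sim_T$-classes is immediate, since $p\sim_{\mathrm{Mv}}e$ forces $I_p=I_e$ and $[p]_{\mathrm{K}_0}=[e]_{\mathrm{K}_0}$, and the defining hypothesis of $\sim_T$ already equates the coefficient classes inside $\underline{\mathrm{K}}(I_p)$. Additivity of $\phi_A$ follows from the addition formula of Definition \ref{natural def ltk} together with functoriality of $\underline{\mathrm{K}}$ applied to the commuting inclusions $I_p,I_e\hookrightarrow I_{p\oplus e}\hookrightarrow A\otimes\mathcal{K}$. By the universal property of the Grothendieck construction, $\phi_A$ induces a group homomorphism $\alpha_A\colon \mathrm{Gr}(\underline{V}(A))\to \underline{\mathrm{K}}(A)$ with $\alpha_A([(x,y)]_{\mathrm{Gr}})=\phi_A(x)-\phi_A(y)$, and by construction $\alpha_A\circ\rho=\phi_A$.

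Surjectivity and the order statement I would handle at once by showing $\phi_A(\underline{V}(A))=\underline{\mathrm{K}}(A)_+$. The inclusion $\phi_A(\underline{V}(A))\subseteq \underline{\mathrm{K}}(A)_+$ is built into the Dadarlat--Gong cone, as $\phi_A([(p,u,\oplus_n(s_{n,0},s_{n,1}))])$ has positive $\mathrm{K}_0$-part $[p]_{\mathrm{K}_0}$ and lies in $\underline{\mathrm{K}}(I_p\,|\,A)$ with $I_p\in\mathcal{I}([p]_{\mathrm{K}_0})$. For the reverse inclusion it suffices to hit each generator $\eta$ of $\underline{\mathrm{K}}(A)_+$; such an $\eta$ satisfies $\eta_0^0=[p]_{\mathrm{K}_0}$ for a projection $p$ with $I_p\in\mathcal{I}(\eta_0^0)$, and lies in the graded subgroup $\delta_{I_p}(\underline{\mathrm{K}}(I_p))$. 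Because every ideal of $A\otimes\mathcal{K}$ is stable, every class in $\mathrm{K}_1(I_p)$ and $\mathrm{K}_*(I_p;\mathbb{Z}_n)$ is realized by an honest unitary in $I_p^\sim$ and honest homomorphisms $q\mathbb{C}\to I_p\otimes C_0(W_n)$ through the Cuntz picture of \ref{cutnz new}, so the higher part of $\eta$ is $\phi_A$ of a genuine tuple over $p$. Since $\phi_A$ is additive and $\underline{\mathrm{K}}(A)_+$ is generated by such $\eta$, we get $\underline{\mathrm{K}}(A)_+\subseteq \phi_A(\underline{V}(A))$, and then Theorem \ref{ordertotal}(i) gives $\underline{\mathrm{K}}(A)=\underline{\mathrm{K}}(A)_+-\underline{\mathrm{K}}(A)_+$, so $\alpha_A$ is onto. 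Consequently $\alpha_A$ carries $\rho(\underline{V}(A))$ bijectively onto $\phi_A(\underline{V}(A))=\underline{\mathrm{K}}(A)_+$, which is exactly the claimed isomorphism of pre-ordered groups.

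The crux is injectivity of $\alpha_A$, and this is where I expect the main work. Suppose $\phi_A([(p,u,\oplus_n(s_{n,0},s_{n,1}))])=\phi_A([(e,v,\oplus_n(t_{n,0},t_{n,1}))])$. Fix an increasing approximate unit $(E_m)$ of projections for $A\otimes\mathcal{K}$, so that the ideals $I_{E_m}$ increase with dense union and $\underline{\mathrm{K}}(A)=\underline{\mathrm{K}}(A\otimes\mathcal{K})=\varinjlim_m \underline{\mathrm{K}}(I_{E_m})$ by continuity of $\underline{\mathrm{K}}$ (using that $\otimes\, C_0(W_n)$ commutes with inductive limits). From $[p]_{\mathrm{K}_0}=[e]_{\mathrm{K}_0}$ and $\mathrm{Gr}(V(A))=\mathrm{K}_0(A)$ there is a projection $r_0$ with $p\oplus r_0\sim_{\mathrm{Mv}}e\oplus r_0$; choosing $m$ large enough that $r_0\lesssim_{\mathrm{Mv}}E_m$ and $I_p,I_e\subseteq I_{E_m}$ yields $p\oplus E_m\sim_{\mathrm{Mv}}e\oplus E_m$ and $I_{p\oplus E_m}=I_{e\oplus E_m}=I_{E_m}$. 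Since the two higher coefficient data have equal images in $\underline{\mathrm{K}}(A)=\varinjlim_m\underline{\mathrm{K}}(I_{E_m})$, they already coincide in $\underline{\mathrm{K}}(I_{E_m})$ after enlarging $m$; this is the step that promotes equality in the top algebra to equality at a finite ideal stage, and it is precisely what repairs the possible non-injectivity of the individual inclusion maps $\mathrm{K}_*(I_p)\to\mathrm{K}_*(A)$. Taking $z=[(E_m,1,\oplus_n(0,0))]$, whose coefficient data is trivial, one then gets $[(p,u,\oplus_n(s_{n,0},s_{n,1}))]+z=[(e,v,\oplus_n(t_{n,0},t_{n,1}))]+z$ in $\underline{V}(A)$, hence $[(x,y)]_{\mathrm{Gr}}=0$.

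Finally, for the $\Lambda$-module assertion and the naturality of $\alpha_{(\cdot)}$, I would transport the $\Lambda$-structure along the isomorphism $\alpha_A$; note that the coefficient operations $\rho,\kappa,\beta$ do not preserve the monoid $\underline{V}(A)$ (a nonzero coefficient class requires a nonzero projection part), so they genuinely live only on the group completion. Naturality then reduces to the monoid-level identity $\phi_B\circ \underline{V}(\psi)=\underline{\mathrm{K}}(\psi)\circ\phi_A$ for every morphism $\psi\colon A\to B$ in $C^*_{\sigma_{\mathrm p}}$, which follows from the commuting square of ideal inclusions displayed in Remark \ref{v total psi} together with the functoriality of $\underline{\mathrm{K}}$. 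Since each $\alpha_A$ is then a $\Lambda$-linear isomorphism, $\alpha_{(\cdot)}$ is the desired natural equivalence between $\mathrm{Gr}(\underline{V}(\cdot))$ and $\underline{\mathrm{K}}(\cdot)$.
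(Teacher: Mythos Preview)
Your proposal is correct and follows essentially the same strategy as the paper's own proof: define the monoid map into $\underline{\mathrm{K}}(A)$, identify its image with $\underline{\mathrm{K}}(A)_+$ via Definition~\ref{dg order}, and---for the crucial injectivity step---use the approximate unit of projections to realise $A\otimes\mathcal{K}$ as an increasing union of singly-generated ideals so that equality of coefficient data in $\underline{\mathrm{K}}(A)$ descends to some $\underline{\mathrm{K}}(I_{E_m})$, yielding a stabilising element $z=[(E_m,1,\oplus_n(0,0))]$. The paper phrases the injectivity via the auxiliary projections $e_k'=e\oplus q\oplus p\oplus\bigoplus_{i\le k}e_i$ and the observation that only finitely many entries are nonzero, while you invoke continuity of $\underline{\mathrm{K}}$ under inductive limits; these are the same argument in different clothing.
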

\begin{proof}
Assume $A$ is stable.
Define a map
$$
\alpha:\,\underline{V}(A)\to \underline{\mathrm{K}}(A)_+
$$
$$
\alpha([(p,u,\oplus_{n=1}^{\infty}(s_{n,0},s_{n,1}))])=([p]_{\mathrm{K}_0},[u],\oplus_{n=1}^{\infty}([s_{n,0}],[s_{n,1}])).
$$
From the definition of $\underline{\rm K}(A)_+$ listed in \ref{dg order}, $\alpha$ is a well-defined surjective monoid morphism.
($\alpha$ is just surjective, not necessarily injective; our modification fo Dadalat-Gong order guarantees $\alpha(\underline{V}(A))\subset \underline{\mathrm{K}}(A)_+$.)

  Note that ${\rm {K}}_0^+(A)$ is a sub-cone of $ {\rm \underline{K}}(A)_+$ (Theorem \ref{ordertotal} (i)), it induces a pre-order on ${\rm \underline{K}}(A)_+$, that is, for  $ (f,\overline{f}),(g,\overline{g})\in{\rm \underline{K}}(A)_+$ ($f,g\in {\rm K}_0^+(A)$ and $\overline{f},\overline{g}\in {\rm K}_1(A)\times\oplus_{n=1}^{\infty}{\rm K}_*(A;\mathbb{Z}_n)$), we say
$
(f,\overline{f})\leq(g,\overline{g}),
$
if
$
  g-f\in {\rm {K}}_0^+(A)\,\,{\rm and} \,\,\overline{f}=\overline{g}.
$


Then $\alpha$ is a per-order preserving monoid morphism.

  Suppose that
  $$
  \alpha[(e,{u},\oplus_{n=1}^{\infty}({s}_{n,0},{s}_{n,1}))]
  =\alpha[(q,{v},\oplus_{n=1}^{\infty}({t}_{n,0},{t}_{n,1}))],
  $$
  which is
  $$
  ([e]_{\mathrm{K}_0},\delta_{I_eA}([{u}],\oplus_{n=1}^{\infty}( [{s}_{n,0}],[{s}_{n,1}])))=
  ([q]_{\mathrm{K}_0},\delta_{I_qA}([{v}],\oplus_{n=1}^{\infty}( [{t}_{n,0}],[{t}_{n,1}]))).
  $$
$[e]_{\mathrm{K}_0}=[q]_{\mathrm{K}_0}$ implies that there exists a projection $p\in A$ such that
$$e\oplus p\sim_{\rm Mv} q\oplus p.$$

  As $(e_k)$ is a countable approximate unit  consisting of projections,
  denoting $e_k'=e\oplus q\oplus p\oplus \bigoplus_{i=1}^k e_i\in M_{k+3}(A)\cong A$ for $k\in \mathbb{N}$, we have
$$
I_{e_1'}\to I_{e_2'}  \to I_{e_3'}  \to\cdots
$$
is an inductive system with $\lim\limits_{k\to +\infty}   I_{e_k'}=A$.

Note that only finite many entries of   $([e],\delta_{I_eA}([{u}],\oplus_{n=1}^{\infty}( [{s}_{n,0}],[{s}_{n,1}])))$ and
 $ ([q],\delta_{I_qA}([{v}],\oplus_{n=1}^{\infty}( [{t}_{n,0}],[{t}_{n,1}])))$ are non-zero.
There exists a large enough $k_0$ such that
$$
\delta_{I_eI_{e_{k_0}'}}([e]_{{\rm K}_0},[{u}],\oplus_{n=1}^{\infty}( [{s}_{n,0}],[{s}_{n,1}])))=
  \delta_{I_qI_{e_{k_0}'}}([q]_{{\rm K}_0},[{v}],\oplus_{n=1}^{\infty}( [{t}_{n,0}],[{t}_{n,1}])).
$$
Then
  $$
  [({e_{k_0}'}\oplus e, \iota^\sim_{I_eI_{e_{k_0}'}}({u}), \oplus_{n=1}^{\infty}((\iota_{I_eI_{e_{k_0}'}}\otimes {\rm id}_{C_0(W_n)})\circ{s}_{n,0},
  (S\iota_{I_eI_{e_{k_0}'}}\otimes {\rm id}_{C_0(W_n)})\circ{s}_{n,1}))]
  $$$$=
  [({e_{k_0}'}\oplus q, \iota^\sim_{I_eI_{e_{k_0}'}}({v}), \oplus_{n=1}^{\infty}((\iota_{I_qI_{e_{k_0}'}}\otimes {\rm id}_{C_0(W_n)})\circ{t}_{n,0},
  (S\iota_{I_qI_{e_{k_0}'}}\otimes {\rm id}_{C_0(W_n)})\circ{t}_{n,1}))],
  $$
  which is
  $$
  [({e_{k_0}'},1_\mathbb{C},\oplus_{n=1}^{\infty}(0,0))]+[(e,{u},
  \oplus_{n=1}^{\infty}({s}_{n,0},{s}_{n,1}))]
  $$$$=
  [({e_{k_0}'},1_\mathbb{C},\oplus_{n=1}^{\infty}(0,0))]+[(q,{v},
  \oplus_{n=1}^{\infty}({t}_{n,0},{t}_{n,1}))].
  $$

  This means that for any $x,y\in \underline{V}(A)$, if $\alpha(x)=\alpha(y)$, under the Grothendieck construction, the difference between $x$ and $y$ vanishes.

  Consider the natural map $\rho:\, \underline{V}(A)\to {\rm Gr}(\underline{V}(A))$. If $\rho(x)=\rho(y)$, there exists $z\in \underline{V}(A)$ such that
$x+z=y+z$, hence, $$\alpha(x)+\alpha(z)=\alpha(y)+\alpha(z).$$ By Theorem \ref{ordertotal} (i),
$\alpha(x),\alpha(y),\alpha(z)$ are elements in the group $\underline{\mathrm{K}}(A)$, we get $\alpha(x)=\alpha(y)$.

  Then we have the canonical isomorphisms
  $$
  {\rm Gr}(\underline{V}(A) )\cong{\rm Gr}(\alpha(\underline{V}(A))= {\rm Gr}({\rm \underline{K}}(A)_+)={\rm \underline{K}}(A)
  $$
 and
 $$
 ({\rm Gr}(\underline{V}(A)),\rho(\underline{V}(A))\cong({\rm \underline{K}}(A),{\rm \underline{K}}(A)_+).
 $$

  Since  ${\rm \underline{K}}(A)$ is a $\Lambda$-module, then ${\rm Gr}(\underline{V}(A))$ can also be regarded as $\Lambda$-module
  through the canonical isomorphism.
   (In particular, from the identification of $V(A)$ and $\underline{V}(A)_+$ (at the end of \ref{natural def ltk}), we have $({\rm Gr}({V}(A)),\rho({V}(A))\cong ({\rm {K}_0}(A),{\rm {K}_0^+}(A))$).

For each $A\in C^*_{\sigma_{\rm p}}$,
denote the above canonical isomorphism by
$$\alpha_A:\,\mathrm{Gr}(\underline{V}(A))\to \underline{\mathrm{K}}(A).$$
Let $B\in C^*_{\sigma_{\rm p}}$ and $\psi:\,A\rightarrow B$ be a $*$-homomorphism, by Remark \ref{v total psi}, we have the induced map
$\underline{V}(\psi): \underline{V}(A)\rightarrow  \underline{V}(B)$.

Together with the following natural commutative diagram,
 $$
\xymatrixcolsep{3pc}
\xymatrix{
{\,\,\mathrm{Gr}(\underline{V}(A))\,\,} \ar[d]_-{\mathrm{Gr}(\underline{V}(\psi))} \ar[r]^-{\alpha_A}
& {\,\,\underline{\mathrm{K}}(A)\,\,}  \ar[d]^-{\underline{\mathrm{K}}(\psi)}
\\
{\,\, \mathrm{Gr}(\underline{V}(B))\,\,} \ar[r]^-{\alpha_B}
& {\,\,\underline{\mathrm{K}}(B)\,\,}  }
$$
we have the family $(\alpha_A)_{A\in C^*_{\sigma_{\rm p}}}$ is the desired natural equivalence of functors from $\mathrm{Gr}(\underline{V}(\cdot))$ to $\underline{\mathrm{K}}(\cdot)$.

\end{proof}


\subsection{An alternative observation}

\begin{notion}\rm


    Denote by ${\rm Lat}_{\rm p}(A)$ the subset of ${\rm Lat}(A)$ consisting of all the ideals of $A$ that contains a full projection. Hence, any ideal in ${\rm Lat}_{\rm p}(A)$ is singly-generated by a projection.
Define $V_{\rm p}(I):=\{[e]_{\rm Mv}\in V(A)\,|\, I_e=I\}$. In other words, $V_{\rm p}(I)$ consists of the elements of $V(A)$ that are full in $V(I)$. Thus,
if an ideal $I$ doesn't contain any full projections, we have
$V_{\rm p}(I)=\varnothing$.
\end{notion}

With a similar argument in \cite[Definition 3.18, Theorem 3.19]{AL}, we have the following abstract description of latticed total K-theory.

\begin{theorem}\label{cu total def}
  Let $A$ be a  separable $\sigma_{\rm p}$-unital ${C}^*$-algebra. 
  Then we have a natural isomorphism of pre-ordered monoid as follows:
  $$
  \underline{V}(A)
  \cong\coprod_{I\in {\rm Lat}_{\rm p}(A\otimes \mathcal{K})} V_{\rm p}(I)\times {\rm K}_1(I)\times\bigoplus_{n=1}^{\infty} {\rm K}_* (I; \mathbb{Z}_n),
  $$
  where we equip the right side with addition and pre-order as follows:
  For any
  $$
  (x,\mathfrak{u},\oplus_{n=1}^{\infty}(\mathfrak{s}_{n,0},\mathfrak{s}_{n,1}))\in V_{\rm p}(I_x)\times {\rm K}_1(I_x)\times\bigoplus_{n=1}^{\infty} {\rm K}_* (I_x; \mathbb{Z}_n)
  $$
  and
$$
  (y,\mathfrak{v},\oplus_{n=1}^{\infty}(\mathfrak{t}_{n,0},\mathfrak{t}_{n,1}))\in V_{\rm p}(I_y)\times {\rm K}_1( I_y)\times\bigoplus_{n=1}^{\infty} {\rm K}_* (I_y; \mathbb{Z}_n),
  $$
then
$$
(x,\mathfrak{u},\oplus_{n=1}^{\infty}(\mathfrak{s}_{n,0},\mathfrak{s}_{n,1}))
+(y,\mathfrak{v},\oplus_{n=1}^{\infty}(\mathfrak{t}_{n,0},\mathfrak{t}_{n,1}))
$$
$$
=(x+y,\delta_{I_xI_{x+y}}(\mathfrak{u},\oplus_{n=1}^{\infty}(\mathfrak{s}_{n,0},\mathfrak{s}_{n,1}))
+\delta_{I_yI_{x+y}}(\mathfrak{v},\oplus_{n=1}^{\infty}(\mathfrak{t}_{n,0},\mathfrak{t}_{n,1})))
$$
and
$$
(x,\mathfrak{u},\oplus_{n=1}^{\infty}(\mathfrak{s}_{n,0},\mathfrak{s}_{n,1}))
\leq(y,\mathfrak{v},\oplus_{n=1}^{\infty}(\mathfrak{t}_{n,0},\mathfrak{t}_{n,1})),
$$
if
$$\,\,x\leq y~~{\rm in}~~ V(A)\,\,{\rm and}\,\,\delta_{I_xI_y}
(\mathfrak{u},\oplus_{n=1}^{\infty}(\mathfrak{s}_{n,0},\mathfrak{s}_{n,1}))=(\mathfrak{v},
\oplus_{n=1}^{\infty}(\mathfrak{t}_{n,0},\mathfrak{t}_{n,1})),
$$
where $\delta_{I_xI_y}$ is the natural map $\underline{\rm K}( I_x)\rightarrow \underline{\rm K}( I_y)$. 
Note that
 $(\mathfrak{u},\oplus_{n=1}^{\infty}(\mathfrak{s}_{n,0}
,\mathfrak{s}_{n,1}))$ and $(\mathfrak{v}, \oplus_{n=1}^{\infty}(\mathfrak{t}_{n,0},\mathfrak{t}_{n,1}))$ are identified as $(0,\mathfrak{u},\oplus_{n=1}^{\infty}(\mathfrak{s}_{n,0},\mathfrak{s}_{n,1}))\in {\rm \underline{K}}(I_x)$ and $(0,\mathfrak{v},\oplus_{n=1}^{\infty}(\mathfrak{t}_{n,0},\mathfrak{t}_{n,1}))\in {\rm \underline{K}}(I_y)$, respectively.
\end{theorem}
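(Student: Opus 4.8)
The plan is to write down the obvious forward map and verify, by unwinding definitions, that it is a bijective pre-order preserving monoid isomorphism; the entire content is that the projection appearing in a tuple remembers exactly the ideal it generates and, beyond that, only its Murray--von Neumann class. Concretely, I would define
$$
\Phi:\ \underline{V}(A)\longrightarrow \coprod_{I\in {\rm Lat}_{\rm p}(A\otimes\mathcal{K})} V_{\rm p}(I)\times {\rm K}_1(I)\times\bigoplus_{n=1}^{\infty}{\rm K}_*(I;\mathbb{Z}_n)
$$
by sending $[(p,u,\oplus_{n=1}^{\infty}(s_{n,0},s_{n,1}))]$ to the tuple $([p]_{\rm Mv},[u],\oplus_{n=1}^{\infty}([s_{n,0}],[s_{n,1}]))$, placed in the component indexed by $I=I_p$. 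This makes sense because $I_p\in {\rm Lat}_{\rm p}(A\otimes\mathcal{K})$ (as $p$ is full in $I_p$), $[p]_{\rm Mv}\in V_{\rm p}(I_p)$ by definition, and $[u]\in {\rm K}_1(I_p)$, $[s_{n,j}]\in {\rm K}_j(I_p;\mathbb{Z}_n)$ through the Cuntz picture of \ref{cutnz new} and \ref{theta and delta}.

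First I would establish that $\Phi$ is well defined and injective together, both resting on the fact recorded in the preliminaries that $p\sim_{\rm Mv} q$ forces $I_p=I_q$. If two tuples are $\sim_T$-equivalent, then their projections generate the same ideal, so they land in the same component, and by the definition of $\sim_T$ their remaining coordinates have equal $\mathrm{K}$-theory classes; conversely, equality of $\Phi$-images says the projections generate the same ideal and are Murray--von Neumann equivalent while all $\mathrm{K}$-theory classes agree, which is exactly $\sim_T$. For surjectivity, given a component $I$ and a tuple $(x,\mathfrak u,\oplus_{n=1}^{\infty}(\mathfrak s_{n,0},\mathfrak s_{n,1}))$, I would choose a projection $p$ with $[p]_{\rm Mv}=x$ and $I_p=I$ (possible since $x\in V_{\rm p}(I)$), then realize $\mathfrak u$ by a unitary in $I_p^\sim$ and each $\mathfrak s_{n,j}$ by an honest homomorphism out of $q\mathbb{C}$ into $I_p\otimes C_0(W_n)$, respectively $SI_p\otimes C_0(W_n)$. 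Here I use that every ideal of $A\otimes\mathcal{K}$ is stable, so that each class in ${\rm K}_1(I_p)$ and ${\rm K}_*(I_p;\mathbb{Z}_n)$ is represented at the level of a single unitary and a single homomorphism, in agreement with \ref{theta and delta}.

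The monoid and order statements then follow by directly comparing the two sets of formulas, and this is the only place where real care is needed. The addition on $\underline{V}(A)$ in Definition \ref{natural def ltk} is built precisely from the inclusions $\iota_{I_pI_{p\oplus e}}$ and $\iota_{I_eI_{p\oplus e}}$, that is, from the maps $\delta_{I_pI_{p\oplus e}}$ and $\delta_{I_eI_{p\oplus e}}$ into the ideal generated by $p\oplus e$; since $I_{p\oplus e}=I_p+I_e=I_x+I_y$, this coincides verbatim with the addition prescribed on the coproduct, so $\Phi$ is additive. For the pre-order, $\lesssim_T$ unwinds to $p\lesssim_{\rm Mv} e$ together with $\delta_{I_pI_e}([u],\oplus_{n=1}^{\infty}([s_{n,0}],[s_{n,1}]))=([v],\oplus_{n=1}^{\infty}([t_{n,0}],[t_{n,1}]))$, which translates exactly into $x\leq y$ in $V(A)$ and $\delta_{I_xI_y}(\cdots)=(\cdots)$; hence $\Phi$ both preserves and reflects $\leq$.

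I expect the main obstacle to be the consistent bookkeeping of the pushforward maps $\delta$ when the two summands sit over different ideals $I_p,I_e\subset I_{p\oplus e}$. One must check that the relevant squares of inclusions commute, as in the diagram of Remark \ref{v total psi}, so that the $\mathrm{K}$-theory data is transported compatibly, and that $I_{p\oplus e}$ genuinely realizes the join $I_p+I_e$ inside ${\rm Lat}_{\rm p}(A\otimes\mathcal{K})$. Once this is in place, everything else is a routine translation of definitions, and the overall argument parallels \cite[Definition 3.18, Theorem 3.19]{AL}.
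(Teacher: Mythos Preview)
Your proposal is correct and is exactly the argument the paper has in mind: the paper gives no proof beyond the sentence ``With a similar argument in \cite[Definition 3.18, Theorem 3.19]{AL}'', and you have simply written out that routine verification, even citing the same reference at the end. The only point worth flagging is the surjectivity step, where you correctly invoke stability of ideals in $A\otimes\mathcal{K}$ to represent $\mathrm{K}$-classes by single unitaries and by honest homomorphisms $q\mathbb{C}\to I_p\otimes C_0(W_n)$ (without an extra $\mathcal{K}$); this is precisely the implicit identification used in \ref{theta and delta}.
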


\section{Ideal and Latticed morphism}

\subsection{Ideals of  the invariant}

\begin{definition}\label{PD} {\rm (}\cite[Definition 3.3]{L2}{\rm )}~\rm
Let $S$ be a pre-ordered monoid. We say that $S$ is {\it positively directed}, if for any $x \in S$, there exists $p_{x} \in S$ such that $x+p_{x} \geq 0$. 

\end{definition}
\begin{definition}\label{PS}{\rm (}\cite[Definition 3.9]{L2}{\rm )}~\rm
Let $S$ be a positively directed pre-ordered monoid. Let $M$ be
a submonoid of $S$. We say $M$ is an {\it ideal} of $S$ if it satisfies the following.
\begin{enumerate}
\item[(i)] $M$ is a positively directed pre-ordered monoid.
\item[(ii)]  For any $x \in S$, if $(x + P_x) \cap M \neq \varnothing$, then $x \in M$, where $P_x := \{y \in S\mid
x + y\geq 0\}.$
\end{enumerate}

In particular, suppose $S$ is a pre-ordered monoid satisfying that for any $s\in S$, we have $s\geq 0$. Then the
following are equivalent:
\begin{enumerate}
\item[(1)] A submonoid $M$ is an ideal of $S$;
\item[(2)] For any $x\in S$, if $x+y\in M$ for some $y\in S$, we have $x\in M$,
(i.e.,
$M$ is (algebraic) hereditary in $S$).
\end{enumerate}
\end{definition}
The following proposition is a direct corollary of \cite[Theorem 2.3]{Z}
\begin{proposition}\label{Zhang lemma}
If $A$ is a separable $C^*$-algebra of real rank zero,
there is a natural isomorphism:
${\rm Lat}(A)\cong {\rm Lat}(V(A)).$
\end{proposition}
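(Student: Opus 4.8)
The plan is to realise the isomorphism through the canonical assignment $\Phi\colon {\rm Lat}(A)\to {\rm Lat}(V(A))$ sending a (closed two-sided) ideal $I$ to the submonoid $V(I)\subseteq V(A)$, where $V(I)$ is identified with its image under the injection $V(\iota_{IA})$ furnished by Proposition \ref{V-injective}. First I would record two structural remarks about the target lattice. Since every class satisfies $0=[0]_{\rm Mv}\lesssim_{\rm Mv}[p]_{\rm Mv}$, every element of $V(A)$ is $\geq 0$, so $V(A)$ is positively directed in the sense of Definition \ref{PD} and the simplified criterion at the end of Definition \ref{PS} applies: a submonoid of $V(A)$ is an ideal exactly when it is algebraically hereditary. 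Moreover the pre-order on $V(A)$ is the algebraic one, namely $x\leq z$ if and only if $z=x+w$ for some $w\in V(A)$: indeed $[p]_{\rm Mv}\lesssim_{\rm Mv}[r]_{\rm Mv}$ means $p\sim_{\rm Mv} p'\leq r$, whence $[r]_{\rm Mv}=[p]_{\rm Mv}+[r-p']_{\rm Mv}$. Consequently the elements of ${\rm Lat}(V(A))$ are precisely the order-hereditary submonoids of $(V(A),\lesssim_{\rm Mv})$.

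Next I would verify the easy direction, that $V(I)$ always lies in ${\rm Lat}(V(A))$. It is a submonoid by Proposition \ref{V-injective}. For hereditariness, suppose $[p]_{\rm Mv}+[q]_{\rm Mv}\in V(I)$; then $p\oplus q\sim_{\rm Mv} r$ for some projection $r\in I\otimes\mathcal{K}$, so transitivity gives $p\lesssim_{\rm Mv} r$, hence a subprojection $p'\leq r$ with $p'\sim_{\rm Mv} p$, and $p'=rp'r\in I\otimes\mathcal{K}$ because $I\otimes\mathcal{K}$ is an ideal. Thus $[p]_{\rm Mv}\in V(I)$, and $V(I)$ is hereditary. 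Order preservation of $\Phi$ is immediate, since $I\subseteq J$ forces $V(I)\subseteq V(J)$.

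The substance of the argument, and the only place where the real rank zero hypothesis is truly used, is the bijectivity of $\Phi$ with order-preserving inverse; this is exactly the content I would extract from \cite[Theorem 2.3]{Z}. The inverse $\Psi$ sends an order-hereditary submonoid $M$ to the ideal $I_M$ of $A$ determined by letting $I_M\otimes\mathcal{K}$ be the closed two-sided ideal of $A\otimes\mathcal K$ generated by $\{p\in \mathcal{P}(A\otimes\mathcal{K}):[p]_{\rm Mv}\in M\}$ (using that every ideal of $A\otimes\mathcal K$ has the form $I\otimes\mathcal K$). That $\Psi\circ\Phi={\rm id}$ rests on the real rank zero fact that a real rank zero ideal is the closed span of its projections, so $I$ is recovered from $V(I)$; that $\Phi\circ\Psi={\rm id}$ amounts to the equality $V(I_M)=M$, where $M\subseteq V(I_M)$ is clear from the construction and the reverse inclusion $V(I_M)\subseteq M$ is the delicate point.

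I expect this reverse inclusion to be the main obstacle, and it is precisely the input that the structure theory of real rank zero C*-algebras provides: any projection in the ideal generated by a family of projections is, up to Murray--von Neumann equivalence, subordinate to a finite orthogonal sum of subprojections of the generators, so its class lies in the hereditary submonoid generated by the generating classes, namely $M$. Once the lattice isomorphism of \cite[Theorem 2.3]{Z} is expressed in the projection picture, the proposition follows by transporting it through the identifications above, and its naturality is inherited from the functoriality of $V(\cdot)$ recorded in Section 3.
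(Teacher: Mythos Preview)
Your proposal is correct and matches the paper's approach: the paper offers no proof at all, simply declaring the proposition ``a direct corollary of \cite[Theorem 2.3]{Z}'', and you have unpacked exactly that citation, correctly identifying the map $I\mapsto V(I)$, the hereditary-submonoid characterisation of ideals in $V(A)$, and the real rank zero input (projections generate ideals and satisfy a Riesz-type decomposition) needed for surjectivity of $\Phi$ and the inclusion $V(I_M)\subseteq M$. Your outline is thus a faithful expansion of what the paper leaves implicit.
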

We also raise the following version:
\begin{theorem}\label{total lat iso}
Let $A$ be a separable ${C}^*$-algebra of real rank zero. Denote by
${\rm Lat}(\underline{V}(A))$ the collection
of all the ideals of  $\underline{V}(A)$.
Then the map
\begin{eqnarray*}
  \Phi: {\rm Lat} (A) &\to & {\rm Lat}(\underline{V}(A)), \\
  I &\mapsto& \underline{V}(I)
\end{eqnarray*}
is an isomorphism of complete lattices. 
In particular, $A$ is simple if
and only if $\underline{V}(A)$ is simple.
\end{theorem}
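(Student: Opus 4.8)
The plan is to reduce the entire statement to the corresponding lattice isomorphism on the positive cone. By Proposition \ref{Zhang lemma} we already have a complete-lattice isomorphism $\mathrm{Lat}(A)\cong\mathrm{Lat}(V(A))$, and by the identification recorded at the end of Definition \ref{natural def ltk} we have $V(A)\cong\underline{V}(A)_+$. Using the concrete model of Theorem \ref{cu total def}, I will regard an element of $\underline{V}(A)$ as a tuple $(x,\mathfrak{u},\oplus_{n=1}^{\infty}(\mathfrak{s}_{n,0},\mathfrak{s}_{n,1}))$ whose \emph{base} $x$ lies in $V_{\rm p}(I_x)$ and whose coefficient data lie in $\mathrm{K}_1(I_x)\times\bigoplus_n\mathrm{K}_*(I_x;\mathbb{Z}_n)$. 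From the order in Theorem \ref{cu total def} one sees that the positive elements are exactly those with vanishing coefficient data, so $\underline{V}(A)_+$ is $V(A)$, and positive elements compare only to positive elements. The guiding principle I will establish is that every ideal $M$ of $\underline{V}(A)$ is completely recovered from its positive part $M_+:=M\cap V(A)$; granting this, $\Phi$ becomes the composite of Zhang's assignment $I\mapsto V(I)$ with the passage $M_+\mapsto M$.

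First I would verify that $\Phi$ is well defined and injective. Since $p=p^3$, a projection $p\in I\otimes\mathcal{K}$ generates the same ideal $I_p$ inside $I\otimes\mathcal{K}$ and inside $A\otimes\mathcal{K}$, so the functorial map $\underline{V}(\iota_{IA})$ merely re-indexes tuples; its injectivity reduces to the injectivity of $V(\iota_{IA})$ from Proposition \ref{V-injective} on the base together with the intrinsicality of the coefficients. I would then check the two axioms of Definition \ref{PS} for $\underline{V}(I)$, namely positive directedness and the hereditary-type condition. Injectivity of $\Phi$ is then immediate, because $(\underline{V}(I))_+=V(I)$ and the equality $V(I)=V(J)$ inside $V(A)$ forces $I=J$ by Proposition \ref{Zhang lemma}.

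The crux is surjectivity. Let $M\in\mathrm{Lat}(\underline{V}(A))$. A first observation is that $\underline{V}(A)$ is positively directed: for $z=(x,\mathfrak{u},\oplus_n(\mathfrak{s}_{n,0},\mathfrak{s}_{n,1}))$ the element $z'=(x,-\mathfrak{u},\oplus_n(-\mathfrak{s}_{n,0},-\mathfrak{s}_{n,1}))$ (a legitimate element, since the coefficients range over full K-groups) satisfies $z+z'=(2x,0,0)\geq 0$. Using this, I would prove the key equivalence
\[
z=(x,\mathfrak{u},\oplus_n(\mathfrak{s}_{n,0},\mathfrak{s}_{n,1}))\in M\iff (x,0,0)\in M_+ .
\]
For $\Leftarrow$: from $(x,0,0)\in M_+$ we get $(2x,0,0)\in M_+\subset M$, and since $z'\in P_z$ with $z+z'=(2x,0,0)\in M$, axiom (ii) of Definition \ref{PS} gives $z\in M$. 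For $\Rightarrow$: positive directedness of $M$ produces $p_z\in M$ with $z+p_z=(w,0,0)\in M_+$ where $w\geq x$, and the heredity of $M_+$—which one first checks is an ideal of $V(A)$, again via axiom (ii)—descends from $w$ to $x$. Finally Proposition \ref{Zhang lemma} furnishes an ideal $I$ of $A$ with $V(I)=M_+$, and the displayed equivalence identifies $M$ with $\{z:\mathrm{base}(z)\in V(I)\}=\underline{V}(I)=\Phi(I)$.

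It remains to upgrade $\Phi$ to an isomorphism of complete lattices and to deduce the simplicity statement. Both $\Phi$ and its inverse visibly preserve inclusions, so $\Phi$ is an order isomorphism; being the composite of the complete-lattice isomorphism of Proposition \ref{Zhang lemma} with the bijection $M_+\leftrightarrow M$, it preserves arbitrary meets and joins. Simplicity of $A$ then corresponds to $\mathrm{Lat}(\underline{V}(A))$ having only the two trivial ideals. I expect the main obstacle to be the crux equivalence of the previous step: because the pre-order on $\underline{V}(A)$ does not detect the coefficient layers directly, one must genuinely exploit positive directedness together with the hereditary ideal axiom to rule out an ideal that contains a base projection but omits some of the total-K-theory data sitting over it.
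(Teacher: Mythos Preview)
Your proposal is correct and follows essentially the same route as the paper: both arguments use the concrete model of Theorem \ref{cu total def} to identify $\underline{V}(A)_+$ with $V(A)$, exploit the negation trick $z'=(x,-\mathfrak{u},\oplus_n(-\mathfrak{s}_{n,0},-\mathfrak{s}_{n,1}))$ to obtain positive directedness, and then show that every ideal $M$ of $\underline{V}(A)$ is determined by $M_+$, after which Proposition \ref{Zhang lemma} finishes the job. Your explicit articulation of the equivalence $z\in M\iff(x,0,0)\in M_+$ is exactly the content of the paper's two-inclusion check $\underline{V}(I)\subset M$ and $M\subset\underline{V}(I)$.
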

\begin{proof}
Assume $A$ is stable and we identify $V(A)$ with $\underline{V}(A)_+$. By Theorem \ref{cu total def}, $(x,\mathfrak{u},\bigoplus_{n=1}^{\infty}(\mathfrak{s}_{n,0},\mathfrak{s}_{n,1}))
\in\underline{V}(A)$ belongs to $\underline{V}(I)$ if and only if $x \in V(I)$.

Step 1:  we show $\Phi$ is well defined,
that is, for any ideal $I$ of $A$, $\underline{V}(I)$ is an ideal of $\underline{V}(A)$.

By Proposition \ref{Zhang lemma}, we have $V(I)$ is an ideal of $V(A)$, i.e., $V(I)$ is hereditary in $V(A)$.
Given any   $(x,\mathfrak{u},\bigoplus_{n=1}^{\infty}(\mathfrak{s}_{n,0},\mathfrak{s}_{n,1}))\in\underline{V}(I)$, by Theorem  \ref{cu total def}, we have $x \in V(I)$.
Then $(x,-\mathfrak{u},\bigoplus_{n=1}^{\infty}(-\mathfrak{s}_{n,0},-\mathfrak{s}_{n,1}))\in\underline{V}(I)$
and
$$\textstyle
(x,\mathfrak{u},\bigoplus\limits_{n=1}^{\infty}(\mathfrak{s}_{n,0},\mathfrak{s}_{n,1}))
+(x,-\mathfrak{u},\bigoplus\limits_{n=1}^{\infty}(-\mathfrak{s}_{n,0},-\mathfrak{s}_{n,1}))=(2x,0,\bigoplus\limits_{n=1}^{\infty}(0,0))\in\underline{V}(I)
$$
which is positive. By Definition \ref{PD}, $\underline{V}(I)$ is positively directed.

Given any   $(x,\mathfrak{u},\bigoplus_{n=1}^{\infty}(\mathfrak{s}_{n,0},\mathfrak{s}_{n,1}))\in\underline{V}(A)$.
If there exists an element $(y,\mathfrak{v},\bigoplus_{n=1}^{\infty}(\mathfrak{t}_{n,0},\mathfrak{t}_{n,1}))\in\underline{V}(A)$
such that
$$\textstyle
(x,\mathfrak{u},\bigoplus\limits_{n=1}^{\infty}(\mathfrak{s}_{n,0},\mathfrak{s}_{n,1}))
+(y,\mathfrak{v},\bigoplus\limits_{n=1}^{\infty}(\mathfrak{t}_{n,0},\mathfrak{t}_{n,1}))
=(x+y,0,\bigoplus\limits_{n=1}^{\infty}(0,0))\in \underline{V}(I)
$$
which is positive.  Then we have $x+y\in V(I)$, by Definition \ref{PS} (2), $x\in V(I)$.
That is,  $(x,\mathfrak{u},\bigoplus_{n=1}^{\infty}(\mathfrak{s}_{n,0},\mathfrak{s}_{n,1}))\in \underline{V}(I),$ by Definition \ref{PS} (ii), we have $\underline{V}(I)$   is an ideal of $\underline{V}(A)$.

Step 2: Given any ideal $M$ of $\underline{V}(A)$, we will show that there exists an ideal $I$ of $A$ such that $M=\underline{V}(I)$.

Denote $M_+:=\{s\in M|\, s\geq 0 \}$.
By  Theorem  \ref{cu total def}, any $s\in M_+$
has the form $(x,0,\bigoplus_{n=1}^{\infty}(0,0))$, thus,
$
M_+\subset {V}(A).
$
By Definition \ref{PD}, it is easy to see that $M_+$ is an ideal of ${V}(A)$, and by Proposition \ref{Zhang lemma},
there is a unique ideal $I$ of $A$ with $V(I)=M_+$.
Now we need to check $\underline{V}(I)=M$.

For any $(x,\mathfrak{u},\bigoplus_{n=1}^{\infty}(\mathfrak{s}_{n,0},\mathfrak{s}_{n,1}))\in \underline{V}(I),$
then $(x,-\mathfrak{u},\bigoplus_{n=1}^{\infty}(-\mathfrak{s}_{n,0},-\mathfrak{s}_{n,1}))\in\underline{V}(I)$
and
$$\textstyle
(x,\mathfrak{u},\bigoplus\limits_{n=1}^{\infty}(\mathfrak{s}_{n,0},\mathfrak{s}_{n,1}))
+(x,-\mathfrak{u},\bigoplus\limits_{n=1}^{\infty}(-\mathfrak{s}_{n,0},-\mathfrak{s}_{n,1}))
$$
$$\textstyle
=(2x,0,\bigoplus\limits_{n=1}^{\infty}(0,0))\in{V}(I)=M_+\subset M.
$$
That $M$ is an ideal of $\underline{V}(A)$ implies $(x,\mathfrak{u},\bigoplus_{n=1}^{\infty}(\mathfrak{s}_{n,0},\mathfrak{s}_{n,1}))\in M.$
That is, $\underline{V}(I)\subset M$.

Similarly, we can obtain $ M \subset\underline{V}(I)$, then $\underline{V}(I)=M$.

Indeed, Step 2 also implies that $\underline{V}(I)$ is one-to-one corresponding to $V(I)$.
Thus combining both the two steps with Proposition \ref{Zhang lemma}, we have
 the map
\begin{eqnarray*}
  \Phi: {\rm Lat} (A) &\to & {\rm Lat}(\underline{V}(A)), \\
  I &\mapsto& \underline{V}(I)
\end{eqnarray*}
is an isomorphism of complete lattices.

\end{proof}

For convenience, we write the inverse map as follows:
$$
\Psi : {\rm Lat}(\underline{V}(A)) \to {\rm Lat}(A),$$
$$\textstyle
J\mapsto \{y \in A \mid\, yy^* \leq p ~~{\rm for ~~some ~~projection}~~ p ~~{\rm in}~~ A ~~{\rm with}~~ [p]_{\rm Mv}\in J_+\}$$
where $J_+=\{s\in J \mid 0\leq s\}$ is the positive cone of $J$.

\subsection{The category $\underline{\mathcal{V}}$}

\begin{notion}\rm
Let $S$ be a positively directed pre-ordered monoid.
We say $S$ is a $\underline{\mathcal{V}}$-cone,
if

(i) the collection of all the  ideals of $S$ forms a lattice;

(ii) for any ideals $I_1,I_2$ of $S$, we have following natural commutative diagram in $\Lambda$-category
$$
\xymatrixcolsep{2pc}
\xymatrix{
{\,\,\mathrm{Gr}(I_1)\,\,} \ar[r]^-{}
& {\,\,\mathrm{Gr}(I_1+I_2)\,\,}
 \\
{\,\,\mathrm{Gr}(I_1\cap I_2)\,\,} \ar[r]_-{}\ar[u]^-{}
& {\,\,\mathrm{Gr}(I_2)\,\,.}\ar[u]^-{}}
$$

\end{notion}
\begin{proposition}\label{v(a) Lambda cong}
Let $A$ be a separable ${C}^*$-algebra of real rank zero, $\underline{V}(A)$ is a $\underline{\mathcal{V}}$-cone.
\end{proposition}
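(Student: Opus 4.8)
The plan is to check the three clauses in the definition of a $\underline{\mathcal{V}}$-cone one at a time, feeding everything back to the two structural results already available: the natural $\Lambda$-module isomorphism $\alpha:\mathrm{Gr}(\underline{V}(\cdot))\xrightarrow{\sim}\underline{\mathrm{K}}(\cdot)$ of Theorem \ref{Gr V total}, and the complete-lattice isomorphism $\Phi:\mathrm{Lat}(A)\to\mathrm{Lat}(\underline{V}(A))$, $I\mapsto\underline{V}(I)$, of Theorem \ref{total lat iso}.

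First I would record that $\underline{V}(A)$ is positively directed, which is needed before the phrase ``$\underline{\mathcal{V}}$-cone'' even makes sense. Using the description of Theorem \ref{cu total def}, an arbitrary element has the form $(x,\mathfrak{u},\oplus_{n}(\mathfrak{s}_{n,0},\mathfrak{s}_{n,1}))$; negating the group-valued components gives $(x,-\mathfrak{u},\oplus_{n}(-\mathfrak{s}_{n,0},-\mathfrak{s}_{n,1}))$, and their sum is $(2x,0,\oplus_{n}(0,0))\in\underline{V}(A)_+\cong V(A)$, hence $\geq 0$. (This is exactly the computation carried out, for a general ideal, in Step~1 of the proof of Theorem \ref{total lat iso}.) Clause~(i) is then immediate: $\Phi$ identifies $\mathrm{Lat}(\underline{V}(A))$ with the complete lattice $\mathrm{Lat}(A)$, so the ideals of $\underline{V}(A)$ form a lattice.

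For clause~(ii), fix ideals $I_1,I_2$ of $\underline{V}(A)$ and write $I_k=\underline{V}(J_k)$ with $J_k\in\mathrm{Lat}(A)$ via $\Phi$. Because $\Phi$ is a lattice isomorphism and the meet and join in $\mathrm{Lat}(A)$ are $J_1\cap J_2$ and the sum $J_1+J_2$ (which is again a closed ideal), the meet and join of $I_1,I_2$ are $I_1\cap I_2=\underline{V}(J_1\cap J_2)$ and $I_1+I_2=\underline{V}(J_1+J_2)$. Applying the natural isomorphism $\alpha$ of Theorem \ref{Gr V total} at each of the four ideals turns the square in the definition into
$$
\xymatrixcolsep{2pc}
\xymatrix{
{\,\,\underline{\mathrm{K}}(J_1)\,\,} \ar[r]^-{}
& {\,\,\underline{\mathrm{K}}(J_1+J_2)\,\,}
 \\
{\,\,\underline{\mathrm{K}}(J_1\cap J_2)\,\,} \ar[r]_-{}\ar[u]^-{}
& {\,\,\underline{\mathrm{K}}(J_2)\,\,,}\ar[u]^-{}}
$$
each arrow being $\underline{\mathrm{K}}(\cdot)$ applied to an inclusion of ideals of $A$; here one uses that the ideal inclusion $\underline{V}(J_1\cap J_2)\hookrightarrow\underline{V}(J_1)$ inside $\underline{V}(A)$ is literally $\underline{V}(\iota)$ for $\iota:J_1\cap J_2\hookrightarrow J_1$ (the ideal generated by a projection is unchanged when passing to a larger ideal), so the naturality square of Theorem \ref{Gr V total} identifies $\mathrm{Gr}$ of the inclusion with $\underline{\mathrm{K}}(\iota)$.

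Finally I would observe that the four inclusions $J_1\cap J_2\hookrightarrow J_1\hookrightarrow J_1+J_2$ and $J_1\cap J_2\hookrightarrow J_2\hookrightarrow J_1+J_2$ form a commutative square of $*$-homomorphisms, since both composites equal the inclusion $J_1\cap J_2\hookrightarrow J_1+J_2$. Applying the functor $\underline{\mathrm{K}}(\cdot)$ therefore yields a commutative square in the $\Lambda$-category, and transporting back along the $\Lambda$-isomorphisms $\alpha$ gives commutativity of the original square on the $\mathrm{Gr}$-side; this establishes clause~(ii) and hence that $\underline{V}(A)$ is a $\underline{\mathcal{V}}$-cone. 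I expect the only genuinely delicate point to be the bookkeeping in the previous paragraph---verifying that the abstract lattice operations on $\mathrm{Lat}(\underline{V}(A))$ are carried to $\cap$ and $+$ of $C^*$-ideals, and that ideal inclusions of $\underline{V}$-objects coincide with the functorial maps $\underline{V}(\iota)$. Once this is pinned down, the commutativity itself is purely formal, resting entirely on functoriality of $\underline{\mathrm{K}}(\cdot)$ and the naturality of $\alpha$.
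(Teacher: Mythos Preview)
Your proposal is correct and follows essentially the same route as the paper: both arguments invoke Theorem \ref{total lat iso} to translate ideals of $\underline{V}(A)$ into $C^*$-ideals of $A$, observe that the square of ideal inclusions commutes at the algebra level, and then apply Theorem \ref{Gr V total} (functoriality and naturality of $\alpha$) to obtain the required commutative $\Lambda$-diagram. Your version is slightly more explicit in checking positive directedness and in justifying that the lattice operations and inclusion maps match under $\Phi$, but the substance is identical.
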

\begin{proof}
Assume $A$ is stable. By Proposition \ref{total lat iso}, all the  ideals of $\underline{V}(A)$ forms a lattice and
for any ideals $J_1,J_2$ of $\underline{V}(A)$, there exists ideals $I_1,I_2$ of $A$ such that
$\underline{V}(I_i)=J_i,\quad i=1,2.$

Then we obtain a commutative diagram in the category  of separable ${\rm C}^*$-algebras of real rank zero
as follows
$$
\xymatrixcolsep{2pc}
\xymatrix{
{\,\,I_1\,\,} \ar[r]^-{}
& {\,\,I_1+ I_2\,\,}
 \\
{\,\,I_1\cap  I_2\,\,} \ar[r]_-{}\ar[u]^-{}
& {\,\,I_2\,\,}\ar[u]^-{},}
$$
where the corresponding maps are the natural inclusions.
By Theorem \ref{Gr V total}, we have the following commutative diagram
$$
\xymatrixcolsep{2pc}
\xymatrix{
{\,\,{\rm Gr}(\underline{V}(I_1))\cong \underline{\mathrm{K}}(I_1)\,\,} \ar[r]^-{}
& {\,\,{\rm Gr}(\underline{V}(I_1+ I_2))\cong \underline{\mathrm{K}}(I_1+I_2)\,\,}
 \\
{\,\,{\rm Gr}(\underline{V}(I_1\cap  I_2))\cong \underline{\mathrm{K}}(I_1\cap I_2)\,\,} \ar[r]_-{}\ar[u]^-{}
& {\,\,{\rm Gr}(\underline{V}(I_2))\cong \underline{\mathrm{K}}(I_2)\,\,}\ar[u]^-{}.}
$$

\end{proof}

\begin{definition}\label{def lambda cat}\rm
Define the $\underline{\mathcal{V}}$-category   as follows:
$$
{\rm Ob}(\underline{\mathcal{V}})=\{\,S\,|\,S\,\,{\rm is\,\, a\,\, \underline{\mathcal{V}}\text{-}cone}\,\};
$$
let $X,Y\in {\rm Ob}(\underline{\mathcal{V}})$, we say the map $\phi:\,X\to Y$ is a $\underline{\mathcal{V}}$-morphism if $\phi$ satisfies the following two conditions:

(1) $\phi$ is a  pre-order preserving monoid morphism;

(2) For any  ideal couples $(I_1,J_1)$, $(I_2,J_2)$ of $(X,Y)$ with $J_i$ as the ideal in $Y$ containing $\phi(I_i)$ for each $i=1,2,$ we have the following diagram commute naturally in $\Lambda$-category.
\begin{displaymath}
\xymatrixcolsep{0.8pc}
\xymatrix{
{\rm Gr}(I_1) \ar[rr]^-{}\ar[dr]^-{} && {\rm Gr}(I_1+ I_2) \ar[dr]^-{} \\
&{\rm Gr}(J_1) \ar[rr]^-{}&&
{\rm Gr}(J_1+ J_2) \\
&{\rm Gr}(J_1\cap J_2)  \ar[rr]^-{} \ar[u]^-{}&&
{\rm Gr}(J_2)  \ar[u]^-{}\\
{\rm Gr}(I_1\cap I_2) \ar[rr]^-{}\ar[ur]^-{} \ar[uuu]^-{} && {\rm Gr}(I_2) \ar[ur]^-{}\ar[uuu]^-{}
}
\end{displaymath}
\end{definition}






\begin{definition}\label{cone scale}\rm
Given a  $\underline{\mathcal{V}}$-cone $S$ and $\Delta\subset S_+=\{s\in S\mid\, 0\leq s\}$, we say
$\Delta$ is a {\it scale} of $S$ if $\Delta$ is upper directed, hereditary, full subset of $S_{+}$, i.e.,

(i) $\forall\, x_1, x_2 \in \Delta,\, \exists\, x \in \Delta: x_1 \leq x, x_2 \leq x$,

(ii) $\forall\, x \in S_{+},\, \forall\, y \in \Delta: x \leq y \Rightarrow x \in \Delta$,

(iii) $\forall \, x \in S_{+},\, \exists\, y \in \Delta,\, \exists\, k \in \mathbb{N}: x \leq k y$.

\end{definition}
\begin{proposition}
If $A$ is  a ${C}^*$-algebra 
and $A$ has a countable approximate unit $(e_k)$ consisting of projections,  then
$\Delta(A)=\{[(p,1_\mathbb{C},\oplus_{n=1}^{\infty}(0,0))]\,|\,p\in A\}$ is a scale of $\underline{V}(A)$. (Note that $\Delta(A)$ can be also identified as the
subset
$\{[p]_{\rm Mv}\,|\,p\in A\}$ of $V(A)$.)
\end{proposition}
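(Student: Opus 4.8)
The plan is to verify directly the three defining conditions of Definition \ref{cone scale}, working entirely inside the positive cone $S_+ = \underline{V}(A)_+ \cong V(A)$ with its pre-order. I would deliberately \emph{not} reduce to the stable case here (unlike the earlier proofs), since $\Delta(A)$ genuinely depends on which projections lie in $A$ itself rather than merely in $A\otimes\mathcal{K}$; nor is real rank zero needed, only the approximate unit of projections. First I would record that under the identification of $\Delta(A)$ with $\{[p]_{\rm Mv}\mid p\in\mathcal{P}(A)\}\subseteq V(A)$ each element carries trivial decoration, so that for $p,q\in\mathcal{P}(A)$ one has $[(p,1_{\mathbb{C}},\oplus_{n}(0,0))]\le[(q,1_{\mathbb{C}},\oplus_{n}(0,0))]$ exactly when $p\lesssim_{\rm Mv}q$. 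Likewise every $x\in S_+$ is of the form $[(r,1_{\mathbb{C}},\oplus_n(0,0))]$ for some $r\in\mathcal{P}(A\otimes\mathcal{K})$. Thus all three scale axioms collapse to statements about Murray--von Neumann subequivalence in $V(A)$.

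The technical engine is the standard perturbation fact: if $e,p$ are projections with $\|ep-p\|<1$, then $p\lesssim_{\rm Mv}e$. Indeed, $a=pep$ is positive and invertible in the corner $pAp$ (as $\|a-p\|\le\|ep-p\|<1$), and setting $w=ep\,a^{-1/2}$ with $a^{-1/2}\in pAp$ gives $w^*w=a^{-1/2}(pep)a^{-1/2}=p$ and $ww^*=ep\,a^{-1}\,pe\le e$, so $p=w^*w\sim_{\rm Mv}ww^*\le e$. Since $(e_k)$ is an approximate unit, $e_k p\to p$ for every $p\in\mathcal{P}(A)$, hence $\|e_k p-p\|<1$ for $k$ large, whence $p\lesssim_{\rm Mv}e_k$.

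Conditions (i) and (ii) then follow quickly. For upper directedness (i), given $p,q\in\mathcal{P}(A)$ I would choose a single $e_k$ dominating both (possible by the engine above), so that $[e_k]_{\rm Mv}\in\Delta(A)$ majorizes $[p]_{\rm Mv}$ and $[q]_{\rm Mv}$. For heredity (ii), take $x\in S_+$ and $y=[q]_{\rm Mv}\in\Delta(A)$ with $q\in\mathcal{P}(A)$ and $x\le y$; writing $x=[r]_{\rm Mv}$ with $r\in\mathcal{P}(A\otimes\mathcal{K})$, the inequality means $r\lesssim_{\rm Mv}q$, so $r\sim_{\rm Mv}q'$ for a subprojection $q'\le q$. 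Since $q'\in qAq\subseteq A$, we get $x=[q']_{\rm Mv}\in\Delta(A)$.

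The fullness condition (iii) is the point requiring the most care, and I expect it to be the main (though modest) obstacle. Given $x=[r]_{\rm Mv}\in S_+$ with $r\in\mathcal{P}(A\otimes\mathcal{K})$, I would first use density of $\bigcup_m M_m(A)$ in $A\otimes\mathcal{K}$ to replace $r$, up to Murray--von Neumann equivalence, by a projection in some $M_m(A)$. Then $\mathrm{diag}(e_N,\dots,e_N)$ ($m$ copies) is an approximate unit of $M_m(A)$, so by the perturbation engine $r\lesssim_{\rm Mv}\mathrm{diag}(e_N,\dots,e_N)$ for $N$ large, and $\mathrm{diag}(e_N,\dots,e_N)\sim_{\rm Mv}e_N^{\oplus m}$ gives $x\le m\,[e_N]_{\rm Mv}$ in $V(A)$. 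Taking $y=[e_N]_{\rm Mv}\in\Delta(A)$ (as $e_N\in\mathcal{P}(A)$) and $k=m$ completes the verification, and hence $\Delta(A)$ is a scale of $\underline{V}(A)$.
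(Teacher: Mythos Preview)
Your proposal is correct and follows essentially the same route as the paper: verify the three scale axioms directly on $V(A)$ using the perturbation fact $\|e_kp-p\|<1\Rightarrow p\lesssim_{\rm Mv}e_k$, and for fullness pass to $M_m(A)$ with approximate unit $e_N\otimes 1_m$. Your write-up is in fact more careful than the paper's (you justify the perturbation lemma and spell out heredity rather than declaring it ``trivial''), but the underlying argument is the same.
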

\begin{proof} We check the conditions in  Definition \ref{cone scale}.

(i): for any $ x_1, x_2 \in \Delta(A)$,
we lift $x_1,x_2$ to projections $p_1,p_2$ in $A$, then there exists a large enough $k_0\geq 0$ such that
$$
\|p_i-e_{k_0}p_i\|< 1, \quad i=1,2.
$$
Thus, we have $p_i \lesssim_{\rm Mv}e_{k_0}$, $i=1,2$.

(ii):  trivial.

(iii): for any  $x \in \underline{V}(A)_+=V(A)$, there exists an integer $n\geq 0$ such that $x$ can be lift
to a projection $p$ in $M_n(A)$. Since $(e_k\otimes 1_n)_k$ forms an approximate unit of $M_n(A)$ and
there exists a large enough $k_1\geq 0$ such that
$$
\|p-(e_{k_1}\otimes 1_n)\cdot p\|< 1.
$$
Then we obtain $p\lesssim_{\rm Mv}   e_{k_1}\otimes 1_n$, that is, $[p]_{\rm Mv}\leq n\cdot[e_{k_1}]_{\rm Mv}$.

\end{proof}
\subsection{Recover the total K-theory}
\begin{definition}\rm
Define $\underline{\mathcal{V}}_u$
the category whose objects are  all tuples $(S,\Delta_S)$ with $S$ is a $\underline{\mathcal{V}}$-cone and $\Delta_S$ is a scale of $S$ and morphisms are  $\underline{\mathcal{V}}$-morphisms preserving the scale. We say $\phi: (S,\Delta_S)\to(T,\Delta_T)$ is a $\underline{\mathcal{V}}_u$-morphism if
 $\phi:S\to T$ is a $\underline{\mathcal{V}}$-morphism and $\phi(\Delta_S)\subset \Delta_T$.





\end{definition}

\begin{proposition}\label{scaled total v recover}
The assignment
  \begin{eqnarray*}
    \underline{V}_u:\,  C_{rr0}^* & \rightarrow & \underline{\mathcal{V}}_u  \\
    A &\mapsto & (\underline{V}(A),\Delta(A)) \\
    \psi &\mapsto & \underline{V}(\psi)
  \end{eqnarray*}
  from the category of  separable ${C}^*$-algebras of real rank zero to the category $\underline{\mathcal{V}}_u$ is a covariant functor.
\end{proposition}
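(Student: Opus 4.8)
The plan is to verify the three defining properties of a covariant functor: that each object is sent to an object of $\underline{\mathcal{V}}_u$, that each morphism is sent to a $\underline{\mathcal{V}}_u$-morphism, and that identities and composites are preserved. The first of these is almost entirely disposed of by earlier results. Since a separable C*-algebra of real rank zero has a countable approximate unit consisting of projections, Proposition \ref{v(a) Lambda cong} shows $\underline{V}(A)$ is a $\underline{\mathcal{V}}$-cone, and the preceding proposition shows $\Delta(A)$ is a scale of $\underline{V}(A)$; hence $(\underline{V}(A),\Delta(A))\in{\rm Ob}(\underline{\mathcal{V}}_u)$.

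For the morphism part, let $\psi:A\to B$ be a $*$-homomorphism between real rank zero algebras. That $\underline{V}(\psi)$ is a pre-order preserving monoid morphism was already established in the discussion following Remark \ref{v total psi}. Scale preservation is immediate: for a projection $p\in A$ we have $\underline{V}(\psi)([(p,1_\mathbb{C},\oplus_{n=1}^\infty(0,0))])=[(\psi(p),1_\mathbb{C},\oplus_{n=1}^\infty(0,0))]$, and since $\psi(p)$ is again a projection lying in $B$, this element belongs to $\Delta(B)$; thus $\underline{V}(\psi)(\Delta(A))\subset\Delta(B)$.

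The substantive step is condition (2) of Definition \ref{def lambda cat}: the commutativity of the cube attached to two ideal couples $(I_1,J_1),(I_2,J_2)$. Here I would first use Theorem \ref{total lat iso} to replace the abstract ideals $I_1,I_2$ of $\underline{V}(A)$ and $J_1,J_2$ of $\underline{V}(B)$ by honest C*-algebra ideals $\widetilde I_1,\widetilde I_2$ of $A$ and $\widetilde J_1,\widetilde J_2$ of $B$, under which $I_k=\underline{V}(\widetilde I_k)$, $J_k=\underline{V}(\widetilde J_k)$, and the lattice operations $\cap,+$ correspond. The hypothesis that $J_k$ contains $\underline{V}(\psi)(I_k)$ translates, via Theorem \ref{cu total def}, into $\psi(\widetilde I_k)\subset\widetilde J_k$, so $\psi$ restricts to $*$-homomorphisms $\widetilde I_k\to\widetilde J_k$, $\widetilde I_1\cap\widetilde I_2\to\widetilde J_1\cap\widetilde J_2$, and $\widetilde I_1+\widetilde I_2\to\widetilde J_1+\widetilde J_2$, assembling into a commuting cube of C*-algebras whose edges are these inclusions and restrictions. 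Applying the functor $\underline{\mathrm{K}}(\cdot)$ to this cube and invoking the natural isomorphism $\mathrm{Gr}(\underline{V}(\cdot))\cong\underline{\mathrm{K}}(\cdot)$ of Theorem \ref{Gr V total} — in particular the naturality square recorded there, which identifies the vertical edges $\mathrm{Gr}(I_k)\to\mathrm{Gr}(J_k)$ with $\underline{\mathrm{K}}$ of the restricted maps — turns the required commutativity into the functoriality of $\underline{\mathrm{K}}$, which holds automatically. The main obstacle is precisely this translation: one must check that the lattice isomorphism of Theorem \ref{total lat iso} intertwines $\underline{V}(\psi)$ with $\psi$ at the level of ideals, so that the cube of $\underline{\mathcal{V}}$-cones genuinely arises from a cube of C*-algebras. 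Once this compatibility is in hand, commutativity is a formal consequence of functoriality.

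Finally, I would verify functoriality of $\underline{V}_u$ directly from the formula in Remark \ref{v total psi}. For the identity, $\underline{V}(\mathrm{id}_A)$ fixes every representative $(p,u,\oplus_{n=1}^\infty(s_{n,0},s_{n,1}))$ componentwise, hence equals the identity on $\underline{V}(A)$. For a composite $A\xrightarrow{\phi}B\xrightarrow{\psi}C$, the chain rule $(\psi\circ\phi)_{I_pI_{\psi\phi(p)}}=\psi_{I_{\phi(p)}I_{\psi\phi(p)}}\circ\phi_{I_pI_{\phi(p)}}$, together with its unitized and suspended analogues applied to each component, shows $\underline{V}(\psi\circ\phi)=\underline{V}(\psi)\circ\underline{V}(\phi)$, completing the proof.
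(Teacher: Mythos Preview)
Your proposal is correct and follows essentially the same route as the paper: both verify the object assignment via Proposition \ref{v(a) Lambda cong} and the scale proposition, dispose of scale preservation trivially, and establish condition (2) of Definition \ref{def lambda cat} by invoking Theorem \ref{total lat iso} to replace abstract ideals by C*-algebra ideals, forming the commuting cube of C*-algebras, and then applying $\underline{\mathrm{K}}$ together with the natural equivalence of Theorem \ref{Gr V total}. Your explicit verification of the identity and composition axioms is a welcome addition that the paper leaves implicit.
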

\begin{proof}
We may assume that $A,B$ are separable ${\rm C}^*$-algebras of real rank zero. By Proposition \ref{v(a) Lambda cong} and Definition \ref{cone scale},
we have $(\underline{V}(A),\Delta(A))$ is an object in the category $\underline{\mathcal{V}}_u$.

For any homomorphism $\psi:\, A\to B$,
recall the construction in Remark \ref{v total psi}, it is trivial that
$\underline{V}(\psi)(\Delta(A))\subset \Delta(B)$.
Let us show that $\underline{V}(\psi)$ is a $\underline{\mathcal{V}}$-morphism.
(In Remark \ref{v total psi}, it has been shown that $\underline{V}(\psi)$ is pre-order preserving monoid morphism.)

For any  ideal couples $(I_1,J_1)$, $(I_2,J_2)$ of $(\underline{V}(A),\underline{V}(B))$ with $J_i$ as the ideal in $\underline{V}(B)$ containing $\underline{V}(\psi)(I_i)$ for each $i=1,2,$
by Theorem \ref{total lat iso},
there exist
ideal couples $(I_1',J_1')$, $(I_2',J_2')$ of $(A,B)$ with $J_i'$ as the ideal in $B$ containing $\psi(I_i')$
such that
$
\underline{V}(I_i')=I_i,\,\, \underline{V}(J_i')=J_i,
$ for each $i=1,2$.

This implies the following commutative diagram in the category $C_{rr0}^*$.
$$
\xymatrixcolsep{1.3pc}
\xymatrix{
I_1' \ar[rr]^-{}\ar[dr]^-{} && I_1'+ I_2' \ar[dr]^-{} \\
&J_1' \ar[rr]^-{}&&
J_1'+ J_2' \\
&J_1'\cap J_2' \ar[rr]^-{}\ar[u]^-{}&&
J_2'  \ar[u]^-{}\\
I_1'\cap I_2' \ar[rr]^-{}\ar[ur]^-{}\ar[uuu]^-{} && I_2' \ar[ur]^-{}\ar[uuu]^-{}
}
$$
Then the following induced diagram commutes naturally in $\Lambda$-category.
$$
\xymatrixcolsep{1.3pc}
\xymatrix{
\underline{\mathrm{K}}(I_1') \ar[rr]^-{}\ar[dr]^-{} && \underline{\mathrm{K}}(I_1'+ I_2') \ar[dr]^-{} \\
&\underline{\mathrm{K}}(J_1') \ar[rr]^-{}&&
\underline{\mathrm{K}}(J_1'+ J_2') \\
&\underline{\mathrm{K}}(J_1'\cap J_2') \ar[rr]^-{} \ar[u]^-{}&&
\underline{\mathrm{K}}(J_2')  \ar[u]^-{}\\
\underline{\mathrm{K}}(I_1'\cap I_2') \ar[rr]^-{}\ar[ur]^-{} \ar[uuu]^-{}&& \underline{\mathrm{K}}(I_2') \ar[ur]^-{}\ar[uuu]^-{}
}
$$
That is, by Theorem \ref{Gr V total}, the following diagram commutes naturally in $\Lambda$-category, which concludes the proof.
$$
\xymatrixcolsep{0.3pc}
\xymatrix{
{\rm Gr}(\underline{V}(I_1')) \ar[rr]^-{}\ar[dr]^-{} && {\rm Gr}(\underline{V}(I_1'+ I_2')) \ar[dr]^-{} \\
&{\rm Gr}(\underline{V}(J_1')) \ar[rr]^-{}&&
{\rm Gr}(\underline{V}(J_1'+ J_2')) \\
&{\rm Gr}(\underline{V}(J_1'\cap J_2')) \ar[rr]^-{}\ar[u]^-{}&&
{\rm Gr}(\underline{V}(J_2'))  \ar[u]^-{}\\
{\rm Gr}(\underline{V}(I_1'\cap I_2')) \ar[rr]^-{}\ar[ur]^-{}\ar[uuu]^-{} && {\rm Gr}(\underline{V}(I_2')) \ar[ur]^-{}\ar[uuu]^-{}
}
$$

\end{proof}

As expected, we can recover the total K-theory from $\underline{V}$,
which is a direct corollary of Theorem \ref{Gr V total}. 
\begin{proposition}\label{recover prop}
  The assignment
\begin{eqnarray*}
  H:\,  \underline{\mathcal{V}}_u  &\rightarrow& \Lambda_u\\
  (S,\Delta) &\mapsto & ({\rm Gr}(S),\rho(S),
  \rho(\Delta))\\
  \phi &\mapsto & {\rm Gr}(\phi)
\end{eqnarray*}
is a functor, where $\rho:\, S\to {\rm Gr}(S)$ is the natural map $(\rho(x)=[(x,0)]_{\rm Gr})$ and $\Lambda_u$ denotes the category whose objects are ordered scaled $\Lambda$-modules and morphisms are pre-order preserving scaled $\Lambda$-morphisms  in the standard sense. 

 The functor $H$ yields a natural equivalence $H\circ \underline{V}_u\cong(\underline{\mathrm{K}}(\cdot),\underline{\mathrm{K}}(\cdot)_+,\Sigma(\cdot))$, which means, for any  separable {C}*-algebras $A$ of real rank zero, 
we have a natural isomorphism:
$$H\circ \underline{V}_u(A)\cong(\underline{\mathrm{K}}(A),\underline{\mathrm{K}}(A)_+,\Sigma(A)),
$$
where $\Sigma(A)=:\{[p]_{\mathrm{K}_0}|\, p {\rm ~~is ~~a ~~projection~~ in} ~~ A\}.$

\end{proposition}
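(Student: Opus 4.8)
The plan is to treat the two assertions of the proposition separately: first that $H$ is a well-defined covariant functor $\underline{\mathcal{V}}_u\to\Lambda_u$, and then that $H\circ\underline{V}_u$ is naturally equivalent to $(\underline{\mathrm{K}}(\cdot),\underline{\mathrm{K}}(\cdot)_+,\Sigma(\cdot))$. The second assertion is where Theorem \ref{Gr V total} carries essentially all the weight; the first is bookkeeping against the definitions of $\underline{\mathcal{V}}$-cone and $\underline{\mathcal{V}}$-morphism.

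First I would check that $H$ is well defined on objects. Given $(S,\Delta)\in\underline{\mathcal{V}}_u$, since $S$ is an ideal of itself, specializing axiom (ii) of a $\underline{\mathcal{V}}$-cone to $I_1=I_2=S$ endows $\mathrm{Gr}(S)$ with a $\Lambda$-module structure, and $\rho(S)$ is the distinguished positive cone. It remains to verify that $\rho(\Delta)$ is a scale of $(\mathrm{Gr}(S),\rho(S))$ in the sense required by $\Lambda_u$; because $\rho\colon S\to\mathrm{Gr}(S)$ is an order-preserving monoid morphism and $\Delta$ is upward directed, hereditary, and full in $S_+$ by Definition \ref{cone scale}, each of these three properties transfers to $\rho(\Delta)$ inside $\rho(S)$ directly. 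Hence $(\mathrm{Gr}(S),\rho(S),\rho(\Delta))$ is an ordered scaled $\Lambda$-module.

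Next I would check that $H$ is well defined on morphisms. For a $\underline{\mathcal{V}}_u$-morphism $\phi\colon(S,\Delta_S)\to(T,\Delta_T)$, the cube diagram in condition (2) of Definition \ref{def lambda cat}, specialized along ideals of $S$ and their images in $T$, forces the vertical maps to be $\Lambda$-morphisms; in particular the induced map $\mathrm{Gr}(\phi)\colon\mathrm{Gr}(S)\to\mathrm{Gr}(T)$ is $\Lambda$-linear. It is order preserving since $\phi$ preserves the pre-order, and it is scale preserving since $\phi(\Delta_S)\subset\Delta_T$ yields $\mathrm{Gr}(\phi)(\rho(\Delta_S))\subset\rho(\Delta_T)$. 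Preservation of identities and composites is then immediate from the functoriality of the Grothendieck construction, so $H$ is a functor.

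Finally, for the natural equivalence I would invoke Theorem \ref{Gr V total}, which for each separable C*-algebra $A$ of real rank zero supplies a natural isomorphism $\alpha_A\colon(\mathrm{Gr}(\underline{V}(A)),\rho(\underline{V}(A)))\to(\underline{\mathrm{K}}(A),\underline{\mathrm{K}}(A)_+)$ of ordered $\Lambda$-modules, natural in $A$. The only additional point is that $\alpha_A$ carries the scale: under the identification $\Delta(A)=\{[p]_{\rm Mv}\mid p\in A\}\subset V(A)\cong\underline{V}(A)_+$, the isomorphism $\alpha_A$ sends $[p]_{\rm Mv}$ to $[p]_{\mathrm{K}_0}$, whence $\alpha_A(\rho(\Delta(A)))=\{[p]_{\mathrm{K}_0}\mid p\in A\}=\Sigma(A)$. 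Combining this scale identification with the naturality square for $\alpha$ from Theorem \ref{Gr V total} gives the desired natural equivalence $H\circ\underline{V}_u\cong(\underline{\mathrm{K}}(\cdot),\underline{\mathrm{K}}(\cdot)_+,\Sigma(\cdot))$. The bulk of the argument is genuinely a corollary of Theorem \ref{Gr V total}; the only real verifications are the transfer of the scale axioms under $\rho$ and the identification $\alpha_A(\rho(\Delta(A)))=\Sigma(A)$, and the main (mild) obstacle is confirming that the abstractly stated $\underline{\mathcal{V}}$-cone and $\underline{\mathcal{V}}$-morphism axioms are precisely the specializations of the ideal diagrams needed to make $\mathrm{Gr}$ produce $\Lambda$-modules and $\Lambda$-linear maps.
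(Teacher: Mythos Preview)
Your proposal is correct and matches the paper's approach: the paper gives no explicit proof, simply declaring the proposition ``a direct corollary of Theorem \ref{Gr V total}'', and your argument does exactly that---invoking Theorem \ref{Gr V total} for the natural equivalence while filling in the routine bookkeeping (functoriality of $H$, transfer of the scale under $\rho$, and the identification $\alpha_A(\rho(\Delta(A)))=\Sigma(A)$) that the paper leaves implicit.
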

Note that from Proposition \ref{scaled total v recover}, the condition (2) in Definition \ref{def lambda cat} is a natural one.
Moreover, we shall still emphasize that
$$\underline{V}(A)\cong \underline{V}(B)$$
doesn't only mean there exists a
graded pre-order preserving monoid isomorphism from $\underline{V}(A)$ to $\underline{V}(B)$, the extra requirement (condition (2) in Definition \ref{def lambda cat}) on the lattice structure is necessary.

\begin{example}
Let $A=E\otimes\mathcal{K},B=E'\otimes\mathcal{K}$, where $E,E'$ are C*-algebras in \cite[Theorem 3.3]{DEb}. Thus $A,B$ are A$\mathcal{HD}$ algebras of real rank zero and
there exists an ordered $\underline{\mathrm{K}}_{\langle\beta\rangle}$-isomorphism $\gamma$ (i.e.,
$\gamma$ is graded ordered group isomorphism commutating with all $\rho,\kappa$ maps, but not commutating with some $\beta$ maps) such that  
$
\underline{\mathrm{K}}_{\langle\beta\rangle}(A)\cong \underline{\mathrm{K}}_{\langle\beta\rangle}(B),
$ while $A\ncong B$.
But there exists an ordered scaled graded monoid isomorphism $\eta:\,\underline{V}(A)\to \underline{V}(B),$
whose $\mathrm{Gr}(\eta):\, \underline{\rm K}(A)\to \underline{\rm K}(B)$ (via Proposition \ref{recover prop}, we identify $\mathrm{Gr}(\underline{\rm V}(A))$ with $\underline{\rm K}(A)$) is ordered, graded isomorphism, but not $\Lambda$-linear.
\end{example}

\begin{remark}
The example above shows that it is necessary to check the $\Lambda$ structure of the invariants between the algebras. We point out that it is also necessary to keep the $\Lambda$ structure of the invariants between the ideals. One may refer to \cite{ALbock} for such technical discussions on the relations between commutativity and Bockstein operations.

\end{remark}

\subsection{Application to extension}
\begin{notion}\rm
Given pre-ordered monoids  $S$ and $T$, and $\phi:\, S \to T$ a pre-order preserving monoid morphism. 
Denote by ${\rm Im}(\phi)$ and  ${\rm Ker}(\phi)$ the image and the kernel of $\phi$ (i.e., the elements mapped to 0), in the standard sense.
\end{notion}
\begin{theorem}\label{exact V}
Let $A$ be a C*-algebra of real rank zero, $I$ is an ideal of $A$. Then we have
$$
0\to V(I)\xrightarrow{V(\iota)} V(A)\xrightarrow{V(\pi)} V(A/I)\to 0
$$
is exact with respect to ${\rm Im}(\iota)={\rm Ker}(\pi)$.
\end{theorem}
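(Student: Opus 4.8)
The plan is to verify exactness at the three spots separately, treating injectivity of $V(\iota)$ and surjectivity of $V(\pi)$ as the two ``end'' conditions and $\mathrm{Im}(V(\iota)) = \mathrm{Ker}(V(\pi))$ as the ``middle'' condition. Injectivity of $V(\iota)$ is already in hand: it is precisely the content of Proposition \ref{V-injective} applied to the ideal $I$ of $A$, so nothing new is required there. The only genuinely analytic input sits in the surjectivity of $V(\pi)$, whereas the middle exactness is a purely algebraic consequence of the definition of Murray--von Neumann equivalence.

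For surjectivity, I would first pass from $A$ to $A \otimes \mathcal{K}$, using that real rank zero is preserved under tensoring with $\mathcal{K}$ and under passage to ideals and quotients; hence each of $A \otimes \mathcal{K}$, $I \otimes \mathcal{K}$ and $(A/I) \otimes \mathcal{K} \cong (A \otimes \mathcal{K})/(I \otimes \mathcal{K})$ has real rank zero. I would then apply the equivalence (b)$\,\Leftrightarrow\,$(c) of Proposition \ref{lin inj} to the extension $0 \to I \otimes \mathcal{K} \to A \otimes \mathcal{K} \to (A/I) \otimes \mathcal{K} \to 0$: since $rr(A \otimes \mathcal{K}) = 0$, every projection $\bar q$ in $(A/I) \otimes \mathcal{K}$ is the image of a projection $q$ in $A \otimes \mathcal{K}$. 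As every class in $V(A/I)$ is represented by such a $\bar q$, we get $V(\pi)([q]_{\rm Mv}) = [\bar q]_{\rm Mv}$, so $V(\pi)$ is surjective.

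For the middle, the inclusion $\mathrm{Im}(V(\iota)) \subseteq \mathrm{Ker}(V(\pi))$ is immediate from $\pi \circ \iota = 0$, which gives $V(\pi) \circ V(\iota) = V(\pi \circ \iota) = 0$ by functoriality. For the reverse inclusion, suppose $[p]_{\rm Mv} \in \mathrm{Ker}(V(\pi))$, so $[\pi(p)]_{\rm Mv} = 0$ in $V(A/I)$, i.e. $\pi(p) \sim_{\rm Mv} 0$; since the only projection Murray--von Neumann equivalent to $0$ is $0$ itself, this forces $\pi(p) = 0$ and hence $p \in I \otimes \mathcal{K}$. Thus $[p]_{\rm Mv}$ is the image under $V(\iota)$ of the class it defines in $V(I)$, and so lies in $\mathrm{Im}(V(\iota))$.

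The single nontrivial step is the surjectivity, and its crux is the projection lifting supplied by real rank zero via Proposition \ref{lin inj}; without that hypothesis one cannot ensure that classes in $V(A/I)$ arise from honest projections in $A \otimes \mathcal{K}$. Everything else is formal, and I expect no further obstacle.
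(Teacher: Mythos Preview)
Your proposal is correct and follows essentially the same approach as the paper: both use Proposition \ref{V-injective} for injectivity, Proposition \ref{lin inj} for surjectivity via projection lifting in the stabilized extension, and the observation that $\pi(p)\sim_{\rm Mv} 0$ forces $\pi(p)=0$ (hence $p\in I\otimes\mathcal{K}$) for the middle exactness. You are simply a bit more explicit about why real rank zero passes to the stabilization and why the only projection equivalent to $0$ is $0$.
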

\begin{proof}
Consider the exact sequence:
$$
0\to I\otimes \mathcal{K}\xrightarrow{\iota} A\otimes \mathcal{K}\xrightarrow{\pi} (A/I)\otimes \mathcal{K}\to 0.
$$
By Proposition \ref{V-injective}, $V(\iota)$ is injective. By Proposition \ref{lin inj}, we have
$V(\pi)$ is surjective. And it is obvious that ${\rm Im}(V(\iota))\subset {\rm Ker}(V(\pi))$,
now we show $ {\rm Ker}(V(\pi))\subset {\rm Im}(V(\iota))$.

For any projection $p\in A\otimes \mathcal{K}$ with $[\pi(p)]=0\in V(A/I)$,
we have $\pi(p)\sim_{\rm Mv}0$, i.e., $[\pi(p)]_{{\rm Mv}}=0$. From exactness, we have $p\in {\rm Im}(\iota)$, i.e., $[p]_{\rm Mv}\in {\rm Im}(V(\iota))$.
\end{proof}
Without the assumption of real rank zero, the above result is not true. One may concern: $0\to C_0(0,1)\to C[0,1]\to \mathbb{C}\oplus \mathbb{C}\to 0.$

\begin{lemma}\label{infinite or sr1}
Given two extensions:
$$0\to B_i\xrightarrow{\iota_i} E_i \xrightarrow{\pi_i} A_i\to 0,\quad i=1,2,$$
with all $B_i, E_i, A_i$ are stable C*-algebras of real rank zero,
where $B_i$ is identified as its image---the ideal of $E_i$ through $\iota_i$.
Suppose we have  $\alpha:\, V(E_1)\to V(E_2)$ is a  pre-ordered monoid isomorphism and  $\alpha_0:\,V(B_1)\to V(B_2)$ is also a  pre-ordered monoid isomorphism as the restriction map of $\alpha$, i.e., we have
the following commutative diagram
$$
\xymatrixcolsep{2pc}
\xymatrix{
{\,\,0\,\,} \ar[r]^-{}
& {\,\,{V}(B_1)\,\,} \ar[d]_-{\alpha_0} \ar[r]^-{{V}(\iota_1)}
& {\,\,{V}(E_1)\,\,} \ar[d]_-{\alpha} \ar[r]^-{{V}( \pi_1) }
& {\,\,{V}(A_1)\,\,}  \ar[r]^-{}
& {\,\,0\,\,\,} \\
{\,\,0\,\,} \ar[r]^-{}
& {\,\,{V}(B_2)\,\,} \ar[r]_-{ {V}(\iota_2)}
& {\,\,{V}(E_2) \,\,} \ar[r]_-{{V}(\pi_2)}
& {\,\,{V}(A_2) \,\,} \ar[r]_-{}
& {\,\,0\,\,}.}
$$
Then $A_1$ is infinite if and only if $A_2$ is infinite.
\end{lemma}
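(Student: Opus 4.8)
The plan is to reduce the statement to a purely monoid-theoretic property of the pair $(V(E_i), V(B_i))$ that is manifestly transported by the isomorphism $\alpha$. The first ingredient is a characterization of infiniteness inside the Murray--von Neumann semigroup: for a stable C*-algebra $A$, the algebra $A$ is infinite if and only if there exist $\bar a, \bar b \in V(A)$ with $\bar b \neq 0$ and $\bar a + \bar b = \bar a$. Indeed, if $P$ is an infinite projection in $A$ with $P \sim_{\rm Mv} P_0 \lneq P$, then $\bar a = [P]_{\rm Mv}$ and $\bar b = [P - P_0]_{\rm Mv} \neq 0$ satisfy $\bar a + \bar b = \bar a$; conversely, lifting $\bar a, \bar b$ to projections in $A$ and using stability to make them orthogonal produces a projection $P'$ with a proper subprojection equivalent to $P'$, so $A$ is infinite.

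Second, I introduce the property that will be transported: say that $(V(E), V(B))$ is \emph{$V(B)$-non-cancellative} if there exist $x \in V(E)$, $z \in V(E) \setminus V(B)$ and $b, b' \in V(B)$ with $x + z + b = x + b'$. Because $\alpha$ is an isomorphism restricting to an isomorphism $\alpha_0 : V(B_1) \to V(B_2)$ (so $\alpha(V(B_1)) = V(B_2)$, and hence $\alpha$ maps $V(E_1) \setminus V(B_1)$ onto $V(E_2) \setminus V(B_2)$ and carries relations of the above form to relations of the same form), the property of being $V(B)$-non-cancellative is preserved by $\alpha$ in both directions.

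The third and central step is to show, for each $i$, that $A_i$ is infinite if and only if $(V(E_i), V(B_i))$ is $V(B_i)$-non-cancellative. The ``if'' direction is easy: applying $V(\pi_i)$ to a relation $x + z + b = x + b'$ and using $V(\pi_i)(V(B_i)) = 0$ together with the exactness of Theorem \ref{exact V} (so that $z \notin V(B_i) = \mathrm{Ker}(V(\pi_i))$ forces $V(\pi_i)(z) \neq 0$) yields $\bar a + \bar b = \bar a$ with $\bar b \neq 0$ in $V(A_i)$, whence $A_i$ is infinite by the first step. The ``only if'' direction is the main obstacle: starting from $\bar a + \bar b = \bar a$, $\bar b \neq 0$ in $V(A_i)$, I lift $\bar a, \bar b$ to $x, z \in V(E_i)$ via the surjectivity of $V(\pi_i)$ (Theorem \ref{exact V}), with $z \notin V(B_i)$; the difficulty is to convert the equality $V(\pi_i)(x + z) = V(\pi_i)(x)$ into an honest relation $x + z + b = x + b'$ in $V(E_i)$ with $b, b' \in V(B_i)$. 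This amounts to lifting a Murray--von Neumann equivalence through $\pi_i$ modulo the ideal $B_i$, i.e.\ to showing that two projections of $E_i$ with equivalent images in $A_i$ become equivalent after adding orthogonal projections from $B_i$; here one uses that $E_i$ has real rank zero (so, by Proposition \ref{lin inj}, $\delta_0 = 0$ and projections lift) and that $B_i$ has real rank zero, lifting the implementing partial isometry to the corner and correcting its defects, which lie in $B_i$, by standard real-rank-zero projection manipulations.

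Combining the three steps gives the chain $A_1 \text{ infinite} \iff (V(E_1), V(B_1)) \text{ is } V(B_1)\text{-non-cancellative} \iff (V(E_2), V(B_2)) \text{ is } V(B_2)\text{-non-cancellative} \iff A_2 \text{ infinite}$, where the middle equivalence is exactly the transport by $\alpha$ from the second step and the outer equivalences are the third step applied to $i = 1$ and $i = 2$; applying the same argument to $\alpha^{-1}$ handles the reverse implication symmetrically. I expect the only genuine work to be the ``only if'' direction of the third step (lifting equivalences modulo $B_i$); everything else is bookkeeping with the already-established exactness and with the isomorphism $\alpha$.
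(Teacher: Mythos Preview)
Your overall architecture is correct and is essentially the paper's strategy, repackaged more cleanly: isolate a monoid-theoretic property of the pair $(V(E),V(B))$ that is equivalent to infiniteness of the quotient and is manifestly transported by $\alpha$. The paper does not name the property, but its relation~(2), namely $[p_1']+[e_1]+[h]+\kappa=[p_1']+[g]+\kappa$ with $[e_1]\notin V(B_1)$, $[g],[h]\in V(B_1)$ and $\kappa\in V(E_1)$, is exactly your ``$V(B)$-non-cancellative'' witness $x+z+b=x+b'$ with $x=[p_1']+\kappa$; the paper's relation~(1) is the trivial orthogonality $[p_1]=[p_1']+[e_1]$, which you absorb into your first step.

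The one place where you diverge from the paper is the ``only if'' direction of step~3, and here your proposal is under-specified. You want: if $V(\pi_i)(x+z)=V(\pi_i)(x)$, then $x'+z+b=x'+b'$ for some $x'\in V(E_i)$ and $b,b'\in V(B_i)$. The paper obtains this from the six-term exact sequence: $[P]_{\mathrm{K}_0}-[Q]_{\mathrm{K}_0}\in\ker\mathrm{K}_0(\pi_i)=\mathrm{Im}\,\mathrm{K}_0(\iota_i)$ gives projections $d,f\in B_i$ with $[P]+[f]=[Q]+[d]$ in $\mathrm{K}_0(E_i)$, hence $[P]+[f]+[s]=[Q]+[d]+[s]$ in $V(E_i)$ for some $s$; the extra $[s]$ is absorbed into your $x'$. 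Your alternative---lift the implementing partial isometry $\bar v$ to $v=QvP$ and ``correct its defects by standard real-rank-zero projection manipulations''---is not standard and faces a genuine obstacle: real rank zero does \emph{not} guarantee that $v^*v$ has a spectral gap in $(0,1)$, so you cannot simply take a spectral projection to produce equivalent subprojections of $P$ and $Q$ with complements in $B_i$. Making this route work requires more than routine manipulations. The $\mathrm{K}$-theory argument is short and robust; if you adopt it, your proof becomes a cleaner rewrite of the paper's.
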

\begin{proof}
Suppose $A_1$ is infinite, i.e., $A_1$ has an infinite projection $p$ with $p=p'+e$, $p'\neq 0$, $e\neq 0$, $p' e=0$ and $p'\sim_{\rm Mv}p$.
Since $E_1$ has real rank zero, by Proposition \ref{lin inj}, we can lift $p'$ and $e$ to projections $p_0'$ and $e_0$ in $E_1$.
Particularly, $p_0', e_0\notin B_1$, i.e., $[p_0']_{\rm Mv}, [e_0]_{\rm Mv}\notin V(B_1)$.
As $E_1$ is stable,
let $s_1,s_2$ be two isometries in ${\mathcal M}(E_1)$ with $s_1s_1^*+s_2s_2^*=1$. (Such a pair $s_1, s_2$ are called $\mathcal{O}_2$-isometries.)
Denote  $p_1'=s_1 p_0' s_1^*$ and $e_1=s_2 e_0 s_2^*$, both of which are still projections in $E_1$
and  $$p_1 e_1=0,\quad p_1'\sim_{\rm Mv}p_0',\quad e_1\sim_{\rm Mv} e_0.$$
Now denote $p_1=p_1'+e_1$, then we have
$$\pi_1(p_1)=\pi_1(p_1')+\pi_1(e_1)\sim_{\rm Mv} p'+e=p\sim_{\rm Mv}p'\sim_{\rm Mv}\pi_1(p_1')\in A_1.
$$
This implies
$$\mathrm{K}_0(\pi_1)([p_1]_{\mathrm{K}_0})=\mathrm{K}_0(\pi_1)([p_1']_{\mathrm{K}_0}+[e_1]_{\mathrm{K}_0})
$$
and
$$
\mathrm{K}_0(\pi_1)([p_1]_{\mathrm{K}_0})=\mathrm{K}_0(\pi_1)([p_1']_{\mathrm{K}_0}).
$$
By  the six-term exact sequence, there exist projections $d,f,g,h$ in $B_1$
with
$$
[p_1]_{\mathrm{K}_0}=[p_1']_{\mathrm{K}_0}+[e_1]_{\mathrm{K}_0}+[d]_{\mathrm{K}_0} -[f]_{\mathrm{K}_0}
$$
and
$$
[p_1]_{{\rm K}_0}=[p_1']_{{\rm K}_0}+[g]_{\mathrm{K}_0} -[h]_{\mathrm{K}_0}.
$$
This means that there exist projections $s,t \in E_1$ with
$$
[p_1]_{\rm Mv}+[f]_{\rm Mv}+[s]_{\rm Mv}=[p_1']_{\rm Mv}+[e_1]_{\rm Mv}+[d]_{\rm Mv} +[s]_{\rm Mv}
$$
and
$$
[p_1]_{\rm Mv}+[h]_{\rm Mv}+[t]_{\rm Mv}=[p_1']_{\rm Mv}+[g]_{\rm Mv}+[t]_{\rm Mv}.
$$
Denote $\kappa=[s]_{\rm Mv}+[t]_{\rm Mv}$, we have
$$
(1):\quad[p_1]_{\rm Mv}+[f]_{\rm Mv}+\kappa=[p_1']_{\rm Mv}+[e_1]_{\rm Mv}+[d]_{\rm Mv} +\kappa
$$
and
$$
(2):\quad[p_1]_{\rm Mv}+[h]_{\rm Mv}+\kappa=[p_1']_{\rm Mv}+[g]_{\rm Mv}+\kappa.
$$
Since $E_2$ is stable, (by using the seven isometries in $\mathcal{O}_7\subset \mathcal{M}(E_2)$,) we can take mutually orthogonal projections $p_2', e_2,d_2,f_2,g_2,h_2,\xi$ in $E_2$ with
$$
[p_2']_{\rm Mv}=\alpha([p_1']_{\rm Mv}),\quad
[e_2]_{\rm Mv}=\alpha([e_1]_{\rm Mv}),
$$$$
[d_2]_{\rm Mv}=\alpha_0([d]_{\rm Mv}),\quad
[f_2]_{\rm Mv}=\alpha_0([f]_{\rm Mv}),
$$$$
[g_2]_{\rm Mv}=\alpha_0([g]_{\rm Mv}),\quad
[h_2]_{\rm Mv}=\alpha_0([h]_{\rm Mv}),\quad
[\xi]_{\rm Mv}=\alpha(\kappa).
$$
Set $p_2=p_2'+e_2$, then
$$
[p_2]_{\rm Mv}=\alpha([p_1']_{\rm Mv})+\alpha([e_1]_{\rm Mv})=\alpha([p_1]_{\rm Mv}).
$$
In $V(E_2)$, $(1)$ and $(2)$ induce
$$
(1'):\quad[p_2]_{{\rm Mv}}+[f_2]_{{\rm Mv}}+[\xi]_{{\rm Mv}}=[p_2']_{{\rm Mv}}+[e_2]_{MV}+[d_2]_{\rm Mv} +[\xi]_{\rm Mv}
$$
and
$$
(2'):\quad[p_2]_{\rm Mv}+[h_2]_{\rm Mv}+[\xi]_{\rm Mv}=[p_2']_{\rm Mv}+[g_2]_{\rm Mv}+[\xi]_{\rm Mv}.
$$
When we map the above into $V(A_2)$, we obtain
$$
(1''):\quad[\pi_2(p_2+\xi)]_{\rm Mv}=[\pi_2(p_2'+e_2+\xi)]_{\rm Mv}
$$
and
$$
(2''):\quad[\pi_2(p_2+\xi)]_{\rm Mv}=[\pi_2((p_2'+\xi)]_{\rm Mv}.
$$
Particularly, we have
$$(1'''):\quad\pi_2(p_2+\xi)=\pi_2(p_2'+\xi)+\pi_2(e_2).
$$
Note that $p_2, e_2\in E_2\backslash B_2$, i.e., $\pi_2(p_2)$ and $\pi_2(e_2)$ are non-zero projections in $A_2$.  So $(1''')$ and $(2'')$ implies $\pi_2(p_2+\xi)$ is an infinite projection in $A_2$.
\end{proof}

\begin{remark}\rm\label{v total exact not}
Let $A$ be a separable C*-algebra  of real rank zero, $I$ is an ideal of $A$. However, unlike Theorem \ref{exact V}, we
don't always have
$$
0\to \underline{V}(I)\xrightarrow{\underline{V}(\iota)} \underline{V}(A)\xrightarrow{\underline{V}(\pi)} \underline{V}(A/I)\to 0
$$
is exact ($\underline{V}(\pi)$ may not be surjective),
though $\underline{V}(\iota)$ is always injective by Theorem \ref{total lat iso}.
If one assume $\underline{V}(\pi)$ is surjective, by Theorem \ref{ordertotal} (i) and Theorem \ref{Gr V total}, one will get
$\underline{\mathrm{K}}(\pi)$ surjective. We refer the readers to \cite[Remark 2.4]{AL} for an example whose $\underline{\mathrm{K}}(\pi)$ is not surjective.


\end{remark}











\section{Classification}

\subsection{Corona factorization property}

\begin{definition}\rm
A C*-algebra $A$ is said to be {\it purely infinite} if $A$ has no nonzero abelian quotients and if for every pair of positive elements $a, b$ in $A$, such that $a$ lies in the closed two-sided ideal generated by $b$, there exists a sequence $\left\{r_j\right\}_{j=1}^{\infty}$ in $A$ with $r_j^* b r_j \rightarrow a$.
\end{definition}
The following class is named to recognize the importance of Eberhard Kirchberg's contributions to the classification of these $C^*$-algebras.

\begin{definition}[\cite{Rbook}]\rm
  A {\it Kirchberg algebra} is a simple, purely infinite, nuclear, separable C*-algebra.
\end{definition}

The corona factorization property is an interesting algebraic condition to characterize  absorbing extensions; see Ng's survey paper \cite{Ng}.
\begin{definition}\rm
  Let $B$ be a separable stable C*-algebra. Then $B$ is said to have the {\it corona factorization
property}, if every norm-full projection in $\mathcal{M}(B)$ is Murray-von Neumann equivalent to $1_{\mathcal{M}(B)}$.
\end{definition}
\begin{proposition} {\rm (}\cite[Theorem 1.4]{KN2},\cite{Ng}{\rm)} \label{cfp abs}
If $B$ is a separable stable C*-algebra with the corona factorization property and $A$ is unital nuclear, then every unital norm-full extension of $A$ by $B$ is absorbing.
\end{proposition}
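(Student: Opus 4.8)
This proposition is precisely \cite[Theorem 1.4]{KN2} (see also the survey \cite{Ng}), and the route I would take factors through the intermediate notion of a \emph{purely large} extension. Recall that an extension $e:0\to B\to E\to A\to 0$ is purely large if for every $x\in E\setminus B$ there is a stable ${\rm C}^*$-subalgebra $D\subseteq \overline{x^*Bx}$ with $\overline{BDB}=B$. The plan is to establish the two implications
\[
\text{(full, $B$ stable with CFP)}\;\Longrightarrow\;\text{purely large}\;\Longrightarrow\;\text{absorbing},
\]
each of which is a known characterisation, and then assemble them.

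For the second implication I would invoke the Elliott--Kucerovsky absorption theorem \cite[Theorem 6]{EK}: for a unital extension of separable ${\rm C}^*$-algebras with $A$ nuclear and $B$ stable, being absorbing in the unital sense of Definition \ref{strong tui cong} is equivalent to being purely large. Since our standing hypotheses already supply $A$ unital nuclear and $B$ separable stable, this step is purely a citation and reduces the entire statement to verifying that $e$ is purely large. Note that fullness is a genuine hypothesis here (unlike in the simple case recorded at the end of Definition \ref{def ext}), since $A$ is only assumed nuclear.

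The first implication is the genuinely technical step, and it is where the corona factorization property enters, following Kucerovsky--Ng \cite{KN}. Here, given $x\in E\setminus B$, one must produce the stable subalgebra $D\subseteq\overline{x^*Bx}$ whose ideal closure is all of $B$. Fullness of the Busby invariant guarantees that the hereditary subalgebra $\overline{x^*Bx}$ is large enough to generate $B$ as an ideal; the delicate point is upgrading this to a genuinely \emph{stable} subalgebra. This is exactly what the corona factorization property controls: the hypothesis that every norm-full projection in $\mathcal{M}(B)$ is Murray--von Neumann equivalent to $1_{\mathcal{M}(B)}$ allows one to absorb arbitrarily many mutually orthogonal copies of a given full projection, and it is this absorption mechanism that manufactures the stability of $D$.

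The main obstacle is therefore this first step: translating the essentially order-theoretic CFP condition on projections in $\mathcal{M}(B)$ into the analytic assertion that $\overline{x^*Bx}$ contains a stable subalgebra with full ideal closure. This demands careful work inside the multiplier and corona algebras of $B$ — constructing the relevant Murray--von Neumann subequivalences and a suitable sequence of orthogonal full projections — and it is precisely the content of the Kucerovsky--Ng analysis. Once that lemma is in hand, the passage to absorption is formal via Elliott--Kucerovsky, and the proposition follows.
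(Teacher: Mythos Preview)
The paper gives no proof of this proposition; it is simply recorded as a citation of \cite[Theorem 1.4]{KN2} and \cite{Ng}. Your sketch---factoring through the notion of a purely large extension, invoking Elliott--Kucerovsky for the implication ``purely large $\Rightarrow$ absorbing'' and Kucerovsky--Ng for ``full with CFP $\Rightarrow$ purely large''---is exactly the standard argument behind those references, and in fact matches almost verbatim a passage the authors left in the \LaTeX\ source but suppressed inside an \texttt{\textbackslash iffalse\,\dots\,\textbackslash fi} block just before the proposition.
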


\begin{remark} \label{CFP ex}
Many classes of separable C*-algebras are known to have the corona factorization property, e.g., any Kirchberg algebra (\cite[Proposition 2.1]{Ng}), all C*-algebras with finite nuclear dimension (\cite[Corollary 3.5]{R CFP}). Note that, by \cite[Section 4]{KW} and \cite{WZ}, an A$\mathcal{HD}$ algebra (\ref{AHD}) has nuclear dimension no more than two.
\end{remark}



\subsection{Main classification theorem}

\begin{definition}\rm
Given two separable C*-algebras $A,B$, we say
$\underline{V}(A)$ and $\underline{V}(B)$ are {\it latticed isomorphic}, denoted by
$\underline{V}(A)\cong \underline{V}(B),$ if
there is a $\underline{\mathcal{V}}$-isomorphism $\phi:\,\underline{V}(A)\to \underline{V}(B)$.

We say
$\underline{V}(A)$ and $\underline{V}(B)$ are {\it latticed scaled isomorphic}, denoted by
$$(\underline{V}(A),\Delta(A))\cong (\underline{V}(B),\Delta(B)),$$ if
there is a $\underline{\mathcal{V}}_u$-isomorphism $\phi:\,(\underline{V}(A),\Delta(A))\to (\underline{V}(B),\Delta(B))$.
\end{definition}
\begin{theorem}\label{Gabe crelle}{\rm (}\cite[Corollary 6.14]{Gabe crelle}{\rm )}
Let $A$ and $B$ be separable, nuclear C*-algebras which are either both
stable or both unital. Then $A \otimes \mathcal{O}_2 \cong B \otimes \mathcal{O}_2$ if and only if ${\rm Lat}(A)\cong{\rm Lat}(B)$.
More concretely, each isomorphism $\alpha:\,{\rm Lat}(A) \to {\rm Lat}(B)$ can be lifted to
a $*$-isomorphism $\psi:\, A \otimes \mathcal{O}_2\to B \otimes \mathcal{O}_2$.
\end{theorem}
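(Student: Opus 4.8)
The plan is to prove the two implications separately, with the forward direction being routine and the backward direction carrying all the weight through a two-sided approximate intertwining. For the easy direction, suppose $A\otimes\mathcal{O}_2\cong B\otimes\mathcal{O}_2$. Since $\mathcal{O}_2$ is simple and nuclear, every ideal of $A\otimes\mathcal{O}_2$ has the form $\overline{I\otimes\mathcal{O}_2}$ for a unique ideal $I$ of $A$ (the closed two-sided ideals of a tensor product with a simple nuclear factor are precisely the tensor products of ideals of the other factor), so $I\mapsto \overline{I\otimes\mathcal{O}_2}$ is a lattice isomorphism ${\rm Lat}(A)\cong{\rm Lat}(A\otimes\mathcal{O}_2)$, and likewise for $B$. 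Composing, ${\rm Lat}(A)\cong{\rm Lat}(A\otimes\mathcal{O}_2)\cong{\rm Lat}(B\otimes\mathcal{O}_2)\cong{\rm Lat}(B)$.

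For the converse, which is the substance, I would first note that the statement is insensitive to stabilization (passing from $A\otimes\mathcal{O}_2$ to $A\otimes\mathcal{K}\otimes\mathcal{O}_2$ preserves both sides), so that both the stable and the unital cases can be treated by the same intertwining, keeping track of the unit in the unital case. The skeleton is an Elliott-type argument: given a lattice isomorphism $\alpha\colon {\rm Lat}(A)\to{\rm Lat}(B)$, I would construct nuclear $*$-homomorphisms $\phi\colon A\otimes\mathcal{O}_2\to B\otimes\mathcal{O}_2$ and $\psi\colon B\otimes\mathcal{O}_2\to A\otimes\mathcal{O}_2$ inducing $\alpha$ and $\alpha^{-1}$ on ideal lattices, respectively, in such a way that $\psi\circ\phi$ and $\phi\circ\psi$ are approximately unitarily equivalent to the respective identity maps; a standard approximate-intertwining argument then assembles these into the desired $*$-isomorphism $\psi$, which by construction lifts $\alpha$.

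Executing this skeleton rests on two pillars, and in both of them the $\mathcal{O}_2$-absorption is decisive. The first is an existence theorem: given $\alpha$, one must produce a nuclear $*$-homomorphism $A\otimes\mathcal{O}_2\to B\otimes\mathcal{O}_2$ realizing $\alpha$ on ideals. Here I would invoke Kirchberg's $\mathcal{O}_2$-embedding theorem in an ideal-compatible form, arranging the embedding so that the induced map on ideal lattices matches a prescribed one, and then upgrade this one-sided datum to an honest ideal-respecting $*$-homomorphism using nuclearity (the Choi--Effros lifting together with completely positive approximation) and a further tensor with $\mathcal{O}_2$. The second is a uniqueness theorem: two full, ideal-compatible nuclear $*$-homomorphisms into an $\mathcal{O}_2$-stable target that induce the same map on ${\rm Lat}$ are approximately unitarily equivalent. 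This is exactly where pure infiniteness and the vanishing of $K$-theoretic obstructions enter: because $A\otimes\mathcal{O}_2$ has trivial ideal-related $KK$-theory, no invariant finer than the ideal lattice can obstruct the comparison, and the comparison itself is carried out by a Kirchberg--Phillips-style intertwining that exploits absorption of full elements in $\mathcal{O}_2$-stable algebras into any fixed matching support.

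The hard part will be the uniqueness theorem, and more precisely the passage from ``same action on the ideal lattice'' to ``approximately unitarily equivalent.'' The obstacle is genuinely structural: one must verify that the lattice already encodes all classification data, i.e.\ that $\mathcal{O}_2$-absorption annihilates every $KK$-type invariant in the ideal-related sense, and then leverage this through an infinite-repeat and absorption argument adapted to the whole ideal structure rather than to a single simple quotient. Once existence and uniqueness are established, the two-sided intertwining and the concrete lifting of $\alpha$ to the isomorphism $\psi$ are formal, and the ``more concretely'' clause about lifting each $\alpha$ is read off directly from the construction of $\psi$.
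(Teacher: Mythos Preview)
The paper does not prove this statement; it is quoted verbatim as an external result, namely Corollary~6.14 of Gabe's paper \cite{Gabe crelle}, and is used as a black box in the proof of Corollary~\ref{gabe corollay} and Theorem~\ref{main thm}. There is therefore no ``paper's own proof'' to compare against.

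That said, your outline is a fair high-level sketch of how Gabe's theorem is actually established: the forward direction is exactly the lattice identification ${\rm Lat}(A)\cong{\rm Lat}(A\otimes\mathcal{O}_2)$ via simplicity and nuclearity of $\mathcal{O}_2$, and the backward direction proceeds through an ideal-related existence/uniqueness package for nuclear $*$-homomorphisms into $\mathcal{O}_2$-stable targets, followed by an Elliott intertwining. Where your sketch is thin is precisely where the real work lies: the ideal-compatible $\mathcal{O}_2$-embedding and the uniqueness-up-to-approximate-unitary-equivalence for maps agreeing on ${\rm Lat}$ are substantial theorems in their own right (the latter requiring that $\mathcal{O}_2$-stability kills all ideal-related $KK$-obstructions), and you would need to either cite them precisely or supply full arguments rather than gesture at ``Kirchberg--Phillips-style intertwining.'' For the purposes of the present paper, however, none of this is needed---the result is simply imported.
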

\begin{corollary}\label{gabe corollay}
Let $A$ and $B$ be separable, nuclear, $\mathcal{O}_2$-stable (i.e., $A\otimes \mathcal{O}_2\cong A$) C*-algebras of real rank zero, which are either
both stable or both unital. Then $A\cong B$ if and only if $\underline{V}_u(A)\cong \underline{V}_u(B)$.
\end{corollary}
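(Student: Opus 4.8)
The plan is to reduce the statement to Gabe's classification theorem (Theorem~\ref{Gabe crelle}) through the ideal lattice, exploiting the fact that $\mathcal{O}_2$-stability collapses all the K-theoretic content carried by $\underline{V}_u$, so that only the lattice of ideals survives as a genuine invariant. The forward implication is immediate from functoriality: since $\underline{V}_u$ is a functor (Proposition~\ref{scaled total v recover}), any $*$-isomorphism $A\cong B$ induces a $\underline{\mathcal{V}}_u$-isomorphism $\underline{V}_u(A)\cong\underline{V}_u(B)$. So the work is entirely in the converse.

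For the converse, suppose $\phi\colon(\underline{V}(A),\Delta(A))\to(\underline{V}(B),\Delta(B))$ is a $\underline{\mathcal{V}}_u$-isomorphism. First I would forget the scale and view $\phi$ merely as an isomorphism of pre-ordered monoids. Because both $\phi$ and $\phi^{-1}$ are additive and preserve the pre-order, $\phi$ carries ideals in the sense of Definition~\ref{PS} onto ideals: positive directedness (Definition~\ref{PD}) and the hereditary condition are each preserved and reflected by an order-isomorphism. Consequently $\phi$ restricts to an inclusion-preserving bijection ${\rm Lat}(\underline{V}(A))\to{\rm Lat}(\underline{V}(B))$, that is, an isomorphism of complete lattices. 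Combining this with Theorem~\ref{total lat iso}, which (using that $A$ and $B$ have real rank zero) supplies complete-lattice isomorphisms ${\rm Lat}(A)\cong{\rm Lat}(\underline{V}(A))$ and ${\rm Lat}(B)\cong{\rm Lat}(\underline{V}(B))$, I obtain a lattice isomorphism ${\rm Lat}(A)\cong{\rm Lat}(B)$.

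With the lattice isomorphism in hand, I would invoke Gabe's theorem. Since $A$ and $B$ are separable and nuclear and are assumed to be either both stable or both unital, Theorem~\ref{Gabe crelle} applies and yields $A\otimes\mathcal{O}_2\cong B\otimes\mathcal{O}_2$. Finally, the $\mathcal{O}_2$-stability hypothesis gives
$$
A\cong A\otimes\mathcal{O}_2\cong B\otimes\mathcal{O}_2\cong B,
$$
which completes the argument.

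The only step that is not purely formal is the passage from a $\underline{\mathcal{V}}$-isomorphism to an isomorphism of the ideal lattices, and I expect this to be the single point requiring care; however, it reduces to the observation that an order-isomorphism of pre-ordered monoids preserves and reflects precisely the conditions defining an ideal, so it is routine. Everything else is an assembly of already-established results---functoriality of $\underline{V}_u$, Theorem~\ref{total lat iso}, and Gabe's Theorem~\ref{Gabe crelle}---together with the defining property of $\mathcal{O}_2$-stability. It is worth emphasizing that no use is made of the finer $\Lambda$-module data recorded by $\underline{V}_u$: for $\mathcal{O}_2$-stable algebras this data is trivial, and the entire classification is governed by the lattice structure, which is exactly what $\underline{V}_u$ encodes faithfully.
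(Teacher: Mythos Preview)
Your proposal is correct and follows essentially the same route as the paper: extract from the $\underline{\mathcal{V}}_u$-isomorphism an isomorphism $\mathrm{Lat}(\underline{V}(A))\cong\mathrm{Lat}(\underline{V}(B))$, compose with Theorem~\ref{total lat iso} to get $\mathrm{Lat}(A)\cong\mathrm{Lat}(B)$, and then apply Theorem~\ref{Gabe crelle} together with $\mathcal{O}_2$-stability. The paper compresses this into two sentences (declaring the lattice isomorphism to arise ``in an obvious way''), whereas you spell out why an order-isomorphism of pre-ordered monoids preserves ideals and make the forward direction and the final $A\cong A\otimes\mathcal{O}_2\cong B\otimes\mathcal{O}_2\cong B$ chain explicit---but the substance is the same.
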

\begin{proof}
Given a  $\underline{\mathcal{V}}_u$-isomorphism $\phi:\,(\underline{V}(A),\Delta(A))\to (\underline{V}(B),\Delta(B))$,
combining with Theorem \ref{total lat iso} and Proposition \ref{v(a) Lambda cong},
$\phi$  induces a lattice isomorphism:
$$\mathrm{Lat}(\phi):\, \mathrm{Lat}(A)\cong\mathrm{Lat}(\underline{V}(A))\to \mathrm{Lat}(\underline{V}(B))\cong\mathrm{Lat}(B)$$
in an obvious way.
Thus, by Theorem \ref{Gabe crelle}, we conclude the proof.
\end{proof}

\begin{proposition} \label{O2 fin nuc}
  Let $A$ be a separable nuclear $\mathcal{O}_2$-stable $C^*$-algebra. Then $A$ has nuclear dimension no more than $3$.
\end{proposition}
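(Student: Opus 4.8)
The plan is to reduce the statement to the nuclear dimension of $\mathcal{O}_2$ itself, exploiting $\mathcal{O}_2$-stability. First, since $A\cong A\otimes\mathcal{O}_2$, it suffices to bound $\dim_{\mathrm{nuc}}(A\otimes\mathcal{O}_2)$ uniformly over all separable nuclear $A$. I would also record that $\mathcal{O}_2$-stability is in fact equivalent here to $\mathcal{O}_\infty$-stability: since $\mathcal{O}_2\otimes\mathcal{O}_\infty$ is a unital Kirchberg algebra in the UCT class with vanishing $\mathrm{K}$-theory, the Kirchberg--Phillips theorem gives $\mathcal{O}_2\otimes\mathcal{O}_\infty\cong\mathcal{O}_2$, whence $A\cong A\otimes\mathcal{O}_2\cong A\otimes\mathcal{O}_2\otimes\mathcal{O}_\infty$. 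This lets me feed the problem into whichever of the two absorption theories is more convenient.

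The conceptual ingredients are: (i) $\mathcal{O}_2$, being a Kirchberg algebra, has finite nuclear dimension \cite{WZ} (in fact $\dim_{\mathrm{nuc}}(\mathcal{O}_2)=1$); and (ii) the tensor-product estimate $\dim_{\mathrm{nuc}}(C\otimes D)+1\le(\dim_{\mathrm{nuc}}C+1)(\dim_{\mathrm{nuc}}D+1)$ of Winter--Zacharias \cite{WZ}. If one could exhibit $A\otimes\mathcal{O}_2$ in the form $C\otimes\mathcal{O}_2$ with $\dim_{\mathrm{nuc}}(C)\le 1$, then (ii) would give $\dim_{\mathrm{nuc}}(A\otimes\mathcal{O}_2)\le 2\cdot 2-1=3$, the asserted bound. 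The mechanism for producing this ``$1$-dimensional'' behaviour is the two-coloured, order-zero completely positive approximation that $\mathcal{O}_2$-absorption forces: a plain completely positive approximation of $A$ through finite-dimensional algebras, afforded by nuclearity, is to be converted into a genuinely order-zero, two-coloured approximation after tensoring with one copy of $\mathcal{O}_2$, the non-multiplicative defect being absorbed by the near-inner half-flip of $\mathcal{O}_2$. Tensoring with the remaining copy of $\mathcal{O}_2$ (itself two-coloured, as $\dim_{\mathrm{nuc}}(\mathcal{O}_2)=1$) then multiplies the colours to four, i.e. yields $\dim_{\mathrm{nuc}}\le 3$.

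The genuine obstacle is that nuclearity alone provides \emph{no} control on the order of the approximating completely positive maps, and an arbitrary separable nuclear $A$ may well have infinite nuclear dimension; so the reduction ``$C$ with $\dim_{\mathrm{nuc}}(C)\le 1$'' cannot be taken with $C=A$ and must instead be carried out approximately, inside the central sequence algebra $F(A)\cong F(A\otimes\mathcal{O}_2)$. The decisive use of $\mathcal{O}_2$-stability is the unital embedding $\mathcal{O}_2\hookrightarrow F(A)$, which supplies the isometries needed to split the multiplicative defect and recombine it into order-zero colours; making this reindexing precise via an $\varepsilon$-test argument is where the real work lies. This is exactly the content of the structure theory of $\mathcal{O}_2$-stable (equivalently $\mathcal{O}_\infty$-stable) separable nuclear $C^*$-algebras, which I would invoke to conclude $\dim_{\mathrm{nuc}}(A)=\dim_{\mathrm{nuc}}(A\otimes\mathcal{O}_2)\le 3$.
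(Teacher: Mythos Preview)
Your strategy and the paper's are the same in spirit: both use that $\mathcal{O}_2\cong\mathcal{O}_2\otimes\mathcal{O}_\infty$ to pass to $\mathcal{O}_\infty$-stability, and both finish with the Winter--Zacharias tensor estimate against $\dim_{\mathrm{nuc}}(\mathcal{O}_2)=1$. The difference is economy. The paper simply cites \cite[Theorem~A]{BGSW}: every separable nuclear $\mathcal{O}_\infty$-stable $C^*$-algebra has nuclear dimension at most $1$. Then
\[
\dim_{\mathrm{nuc}}A=\dim_{\mathrm{nuc}}(A\otimes\mathcal{O}_2)=\dim_{\mathrm{nuc}}\bigl((A\otimes\mathcal{O}_\infty)\otimes\mathcal{O}_2\bigr)\le (1{+}1)(1{+}1)-1=3.
\]
You, by contrast, spend most of your proposal sketching the \emph{proof} of the BGSW result (two-coloured order-zero approximations, the half-flip, embeddings into the central sequence algebra, $\varepsilon$-tests). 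That machinery is exactly what BGSW packages as a black box; the ``$C$ with $\dim_{\mathrm{nuc}}(C)\le 1$'' you are searching for is just $C=A\otimes\mathcal{O}_\infty$ (equivalently $C=A$ itself). Once you cite BGSW, the whole central-sequence discussion is redundant. In fact BGSW alone already gives $\dim_{\mathrm{nuc}}(A)\le 1$, so the stated bound of $3$ is not sharp.

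One small correction: you write that $\mathcal{O}_2$-stability is ``equivalent here'' to $\mathcal{O}_\infty$-stability. Only the implication $\mathcal{O}_2$-stable $\Rightarrow$ $\mathcal{O}_\infty$-stable holds (and is all that is needed); the converse fails in general, e.g.\ $\mathcal{O}_\infty$ itself is $\mathcal{O}_\infty$-stable but has nontrivial $K$-theory, hence is not $\mathcal{O}_2$-stable.
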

\begin{proof}
 Since every separable nuclear $\mathcal{O}_{\infty}$-stable $C^*$-algebra has nuclear dimension $1$ (\cite[Theorem A]{BGSW}) and $\mathcal{O}_{2}\cong\mathcal{O}_{2}\otimes\mathcal{O}_{\infty}$, this result follows directly from Proposition 2.3 and Theorem 7.4 in \cite{WZ}:
  $$
{\rm dim}_{\rm nuc}A={\rm dim}_{\rm nuc}(A\otimes\mathcal{O}_2)={\rm dim}_{\rm nuc}(A\otimes \mathcal{O}_{\infty} \otimes\mathcal{O}_2)\leq 3.
$$
\end{proof}
\begin{notion}\label{L class}\rm
Denote by $\mathcal{L}$ the collection of all the countably direct sums of algebras in the following classes:

($\mathcal{L}$1) all A$\mathcal{HD}$ algebras of real rank zero;

($\mathcal{L}$2) all Kirchberg algebras  satisfying UCT;

($\mathcal{L}$3) all separable nuclear non-simple minimal
$\mathcal{O}_2$-stable algebras of real rank zero (minimal means the algebra can not be written as a direct sum of two or more non-trivial  such algebras).

Note that by Theorem \ref{Gabe crelle}, unital simple $\mathcal{O}_2$-stable algebra is unique, hence, it is $\mathcal{O}_2$---a Kirchberg algebra, i.e., $\mathcal{O}_2$  is a block in  class ($\mathcal{L}$2), but not a block in class ($\mathcal{L}$3), though $\mathcal{O}_2$ is minimal.

Given any algebra $B$ in $\mathcal{L}$, $B$ is a  C*-algebra of real rank zero together with a unique decomposition (up to isomorphism) $B=B^1\oplus B^2\oplus B^3$,
where $B^1$ is an A$\mathcal{HD}$ algebra, $B^2$ is an infinite or finite direct sum of UCT Kirchberg algebras and
$B^3$ is an infinite or finite direct sum of non-simple minimal $\mathcal{O}_2$-stable C*-algebras.
(Note that though $\bigoplus_{\mathbb{N}}\mathcal{O}_2$ is a non-simple $\mathcal{O}_2$-stable C*-algebra,
it can be a part of $B^2$, but can't be a part of $B^3$. The reason is that we require each summand of $B^3$ is non-simple and minimal.) Since $B^2$ has finite nuclear dimension (see Proposition 2.3 and Theorem 7.5 in \cite{WZ}), then by Remark \ref{CFP ex} and Proposition \ref{O2 fin nuc}, $B$ has finite nuclear dimension, and hence,  $B\otimes \mathcal{K}$ has the corona factorization property.
\end{notion}

We are now set to establish the following main theorem.

\begin{theorem} \label{main thm}
Let $A_1,A_2, B_1,B_2\in \mathcal{L}$, assume $B_1,B_2$ are stable and $A_1,A_2$ are  unital simple.
Then for any two unital extensions with trivial boundary maps
$$
e_i:\quad0\rightarrow B_i\xrightarrow{\iota_i} E_i\xrightarrow{\pi_i}A_i \to 0,\quad i=1,2,
$$
Then $E_1\cong E_2$ if and only if  $\underline{V}_u(E_1)\cong \underline{V}_u(E_2)$.



\end{theorem}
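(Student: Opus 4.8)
The plan is to prove the nontrivial (``if'') direction; the forward direction is immediate, since $\underline{V}_u$ is a functor (Proposition \ref{scaled total v recover}), so any $*$-isomorphism $E_1\cong E_2$ induces $\underline{V}_u(E_1)\cong\underline{V}_u(E_2)$. For the converse I first record the structural facts I will use. Because each $e_i$ has trivial boundary maps, Proposition \ref{lin inj} gives $rr(E_i)=0$, so $\underline{V}_u(E_i)$ is defined and Theorems \ref{total lat iso} and \ref{Gr V total} apply. Since $A_i$ is unital and simple and $B_i\neq 0$, the extension $e_i$ is essential and full (the note after Definition \ref{def ext}); since $B_i\in\mathcal{L}$ is stable it has the corona factorization property (end of \ref{L class}), so by Proposition \ref{cfp abs} each $e_i$ is absorbing. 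Thus it will suffice to match the two extension classes in $\mathrm{Ext}^u_{ss}(A_i,B_i)$ under suitable $*$-isomorphisms of the $A_i$ and $B_i$.

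Now fix a $\underline{\mathcal{V}}_u$-isomorphism $\phi\colon(\underline{V}(E_1),\Delta(E_1))\to(\underline{V}(E_2),\Delta(E_2))$. By Theorem \ref{total lat iso} it induces a scale-preserving lattice isomorphism $\mathrm{Lat}(E_1)\cong\mathrm{Lat}(E_2)$. Because $A_i=E_i/B_i$ is simple and $e_i$ is essential and full, the ideal $B_i$ is intrinsically recognizable in $\mathrm{Lat}(E_i)$ (using essentiality together with the scale to single it out among the maximal ideals with simple unital quotient), so $\phi$ carries $\underline{V}(B_1)$ onto $\underline{V}(B_2)$ and descends to the quotients. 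Applying $\mathrm{Gr}$ together with Theorem \ref{Gr V total} and the $\Lambda$-compatibility built into a $\underline{\mathcal{V}}$-morphism (condition (2) of Definition \ref{def lambda cat}), I obtain a commutative ladder of ordered scaled $\Lambda$-modules
$$
\xymatrix{
0\ar[r] & \underline{\mathrm{K}}(B_1)\ar[r]\ar[d]_-{\underline{\beta}} & \underline{\mathrm{K}}(E_1)\ar[r]\ar[d]_-{\mathrm{Gr}(\phi)} & \underline{\mathrm{K}}(A_1)\ar[r]\ar[d]_-{\underline{\alpha}} & 0\\
0\ar[r] & \underline{\mathrm{K}}(B_2)\ar[r] & \underline{\mathrm{K}}(E_2)\ar[r] & \underline{\mathrm{K}}(A_2)\ar[r] & 0,
}
$$
whose rows are exact because the boundary maps vanish. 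In particular $\underline{\alpha}$ and $\underline{\beta}$ are ordered, scaled, $\Lambda$-linear isomorphisms.

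Next I realize $\underline{\alpha}$ and $\underline{\beta}$ by genuine $*$-isomorphisms. Each $A_i$, being a simple unital member of $\mathcal{L}$, is either a simple A$\mathcal{HD}$ algebra or a UCT Kirchberg algebra; by Lemma \ref{infinite or sr1} applied to the $V$-level restriction of $\phi$, $A_1$ is infinite iff $A_2$ is, so the two are of the same type, and classification yields $\alpha\colon A_1\cong A_2$ inducing $\underline{\alpha}$ (via \cite{DG} in the finite case, via Kirchberg--Phillips in the infinite case). For $B_i$ I pass to its canonical decomposition $B_i=B_i^1\oplus B_i^2\oplus B_i^3$ (an A$\mathcal{HD}$ part, a UCT Kirchberg part, and an $\mathcal{O}_2$-stable part), which is detected by the invariant, and classify the three parts by \cite{DG}, Kirchberg's classification, and Corollary \ref{gabe corollay} respectively, producing $\beta\colon B_1\cong B_2$ inducing $\underline{\beta}$.

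Finally I compare extension classes. Each $A_i$ is unital, separable, nuclear and in the bootstrap class, and each $B_i$ is separable, stable with an approximate unit of projections, so Theorem \ref{strong wei} applies; since the boundary maps vanish, the image of $[e_i]$ in $\mathrm{Hom}_{[1]}(\mathrm{K}_*(A_i),\mathrm{K}_*(B_i))$ is zero, so $[e_i]\in\mathrm{Ext}_{[1]}(\mathrm{K}_*(A_i),\mathrm{K}_*(B_i))$ is exactly the class of the based group extension $0\to\mathrm{K}_*(B_i)\to\mathrm{K}_*(E_i)\to\mathrm{K}_*(A_i)\to 0$. The ladder above shows $(\alpha_*,\beta_*)$ carries this extension for $i=1$ onto that for $i=2$, so by the naturality of the UUCT (the version referenced via \cite[Theorem 4.14]{GR}) the two classes correspond; as both $e_1$ and $e_2$ are absorbing, corresponding classes give $e_1\sim_s e_2$ after the identifications $\alpha,\beta$, whence $E_1\cong E_2$. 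I expect the main obstacle to lie in the two compatibility points: recognizing $B_i$ intrinsically in the lattice and verifying $\phi$ respects it, and upgrading the abstract isomorphism to one of the full \emph{extension} data, which is precisely where condition (2) of Definition \ref{def lambda cat} and the naturality of the UUCT must be used so that the $\mathrm{Ext}_{[1]}$-classes, not merely the isolated K-groups of $A_i$ and $B_i$, are matched.
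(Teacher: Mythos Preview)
Your approach is essentially the paper's, and the overall architecture is correct. Two points deserve sharpening.

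First, your identification of $B_i$ inside $\mathrm{Lat}(E_i)$ is vaguer than it should be. You write that $B_i$ is ``intrinsically recognizable \dots\ using essentiality together with the scale to single it out among the maximal ideals with simple unital quotient,'' as if there might be several such ideals. In fact $B_i$ is the \emph{largest} proper ideal of $E_i$: the paper invokes \cite[Proposition~3.4]{ERR} for this, and it is the clean way to see that $\phi$ must carry $\underline{V}(B_1)$ onto $\underline{V}(B_2)$ via Theorem~\ref{total lat iso}. Your heuristic does not obviously rule out other maximal ideals.

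Second, and more substantively, you assert that the $\underline{\mathrm K}$-ladder has exact rows ``because the boundary maps vanish.'' The hypothesis only says the \emph{integer-coefficient} boundary maps $\delta_0,\delta_1$ vanish; this does not force the index maps at each $\mathbb{Z}_n$-level to vanish (a KK$^1$-class can be a pure Ext element, zero on $\mathrm{K}_*$ yet nonzero on some $\mathrm{K}_*(\,\cdot\,;\mathbb{Z}_n)$), so your induced $\underline{\alpha}\colon\underline{\mathrm K}(A_1)\to\underline{\mathrm K}(A_2)$ is not well-defined as stated. The paper avoids this by passing only to the $\mathrm{K}_*$-level, where exactness \emph{is} guaranteed, to produce the induced quotient map $\varrho\colon(\mathrm{K}_*(A_1),[1_{A_1}])\to(\mathrm{K}_*(A_2),[1_{A_2}])$; since $A_i$ is simple, $\mathrm{K}_*$ with order and scale suffices to lift $\varrho$ to a $*$-isomorphism via \cite[Theorem~9.4]{DG} or Kirchberg--Phillips. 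Your argument goes through verbatim once you make that restriction; everything downstream (the decomposition of $B_i$, the separate liftings via \cite{DG}, Kirchberg--Phillips and Corollary~\ref{gabe corollay}, and the UUCT comparison of extension classes via Theorem~\ref{strong wei}) matches the paper.
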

\begin{proof}
Note that by assumption  and Proposition \ref{lin inj}, we have $E_1, E_2$ are separable algebras of real rank zero. But $E_1,E_2$
are not necessarily of stable rank one, as the algebras in $\mathcal{L}$  do not always have stable rank one,
though we assume both the extensions have trivial boundary maps.

By \cite[Proposition 3.4]{ERR}, $B_i$ is the largest proper ideal of $E_i$, $i=1,2$. Suppose we have $\alpha:\,\underline{V}_u(E_1)\cong \underline{V}_u(E_2)$, as
both $E_1,$ $E_2$ are unital,  we obtain a restriction $\alpha:\,(\underline{V}(E_1),[1_{E_1}]_{\rm Mv})\cong (\underline{V}(E_2),[1_{E_2}]_{\rm Mv})$.
Then by Theorem \ref{total lat iso}, for each $i=1,2$, $V(B_i)$ is the unique maximal ideal of $V(E_i)$.
Thus, we obtain $\alpha_0$ as the restriction map of $\alpha$.
That is, we have the following commutative diagram:
$$
\xymatrixcolsep{2pc}
\xymatrix{
 {\,\,\underline{V}(B_1)\,\,} \ar[d]_-{\alpha_0} \ar[r]^-{\underline{V}(\iota_1)}
& {\,\,(\underline{V}(E_1),[1_{E_1}]_{\rm Mv})\,\,} \ar[d]_-{\alpha} \ar[r]^-{\underline{V}( \pi_1) }
& {\,\,(\underline{V}(A_1),[1_{A_1}]_{\rm Mv})\,\,} \\
{\,\,\underline{V}(B_2)\,\,} \ar[r]_-{\underline{V}(\iota_2)}
& {\,\,(\underline{V}(E_2),[1_{E_2}]_{\rm Mv}) \,\,} \ar[r]_-{\underline{V}(\pi_2)}
& {\,\,(\underline{V}(A_2),[1_{A_2}]_{\rm Mv}) \,\,} .}
$$
By Lemma \ref{infinite or sr1} and the definition of $\mathcal{L}$, we have $A_1,A_2$ are both simple A$\mathcal{HD}$
algebras of real rank zero  or both simple Kirchberg algebras.

From \ref{L class}, for each $i=1,2$, we write $V(B_i)=\underline{V}(B_i^1)\oplus \underline{V}(B_i^2)\oplus \underline{V}(B_i^3) $,
where
$B_i^1$ is an A$\mathcal{HD}$ algebra of real rank zero, $B_i^2$ is a  direct sum of  Kirchberg algebras,
$B_i^3$ is a  direct sum of non-simple minimal $\mathcal{O}_2$-stable algebras of real rank zero. By Proposition \ref{cancel or infinite} and Proposition \ref{total lat iso},
$\alpha_0$ can be  decomposed into $\alpha_0^1\oplus \alpha_0^2\oplus \alpha_0^3$, where $\alpha_0^k:\,\underline{V}(B_1^k)\to \underline{V}(B_2^k)$ is a $\underline{\mathcal{V}}_u$-isomorphism for each $k=1,2,3$.

By Proposition \ref{recover prop},
we obtain the following commutative diagram
$$\xymatrixcolsep{2pc}
\xymatrix{
 {\,\,\underline{\mathrm{K}}(B_1)\,\,} \ar[d]_-{H(\alpha_0)}\ar[r]^-{\underline{\mathrm{K}}(\iota_1)}
& {\,\,(\underline{\mathrm{K}}(E_1),[1_{E_1}]_{\mathrm{K}_0})\,\,} \ar[d]_-{H(\alpha)} \ar[r]^-{\underline{\mathrm{K}}(\pi_1)}
& {\,\,(\underline{\mathrm{K}}(A_1),[1_{A_1}]_{\mathrm{K}_0})\,\,}\\
{\,\,\underline{\mathrm{K}}(B_2)\,\,} \ar[r]_-{\underline{\mathrm{K}}(\iota_2)}
& {\,\,(\underline{\mathrm{K}}(E_2),[1_{E_2}]_{\mathrm{K}_0}) \,\,} \ar[r]_-{\underline{\mathrm{K}}(\pi_2)}
& {\,\,(\underline{\mathrm{K}}(A_2),[1_{A_2}]_{\mathrm{K}_0}) \,\,} .}
$$
Particularly, for each $k=1,2,3$, we have
$$H(\alpha_0^k):\,(\underline{\mathrm{K}}(B_1^k),\underline{\mathrm{K}}(B_1^k)_+,{\mathrm{K}}_0^+(B_1^k))\to (\underline{\mathrm{K}}(B_2^k),\underline{\mathrm{K}}(B_2^k)_+,{\mathrm{K}}_0^+(B_2^k))$$
is an ordered scaled $\Lambda$-isomorphism.
By \cite[Theorem 9.1]{DG}, we can lift $H(\alpha_0^1)$ to an isomorphism $\phi_0^1:\,B_1^1\to B_2^1$.
By Kirchberg-Phillips Classification Theorem (see \cite[Theorem 8.4.1]{Rbook}), we can lift $(H(\alpha_0^2))^*$ (the restriction of $H(\alpha_0^2)$ on $\mathrm{K}_*$)
$$(H(\alpha_0^2))^*:\,({\mathrm{K}}_*(B_1^2),{\mathrm{K}}_*^+(B_1^2),{\mathrm{K}}_0^+(B_1^2))\to ({\mathrm{K}}_*(B_2^2),{\mathrm{K}}_*^+(B_2^2),{\mathrm{K}}_0^+(B_2^2))$$
to an isomorphism $\phi_0^2:\,B_1^2\to B_2^2$.
By the proof of Corollary \ref{gabe corollay}, we lift
$${\mathrm{Lat}(\alpha_0^3)}:\, {\rm Lat}(B_1^3)\to {\rm Lat}(B_2^3)$$ to an isomorphism $\phi_0^3:\,B_1^3\to B_2^3$.
(Recall that $\mathcal{O}_2$-stable C*-algebras have trivial $ \underline{\mathrm{K}}$-groups, and hence,
$\phi_0^3$ induces $H(\alpha_0^3)$ as the zero map.)

Denote $\phi_0=\phi_0^1\oplus \phi_0^2\oplus \phi_0^3$.
Restricting on $\mathrm{K}_*$, we have the following commutative diagram with exact rows
$$
\xymatrixcolsep{2pc}
\xymatrix{
{\,\,0\,\,} \ar[r]^-{}
& {\,\,\mathrm{K}_*(B_1)\,\,} \ar[d]_-{\mathrm{K}_*(\phi_0)} \ar[r]^-{\mathrm{K}_*(\iota_1)}
& {\,\,(\mathrm{K}_*(E_1),[1_{E_1}]_{\mathrm{K}_0})\,\,} \ar[d]_-{{({H}(\rho)})^*} \ar[r]^-{\mathrm{K}_*(\pi_1)}
& {\,\,(\mathrm{K}_*(A_1),[1_{A_1}]_{\mathrm{K}_0})\,\,} \ar[d]_-{\varrho} \ar[r]^-{}
& {\,\,0\,\,} \\
{\,\,0\,\,} \ar[r]^-{}
& {\,\,\mathrm{K}_*(B_2)\,\,} \ar[r]_-{  \mathrm{K}_*(\iota_2)}
& {\,\,(\mathrm{K}_*(E_2),[1_{E_2}]_{\mathrm{K}_0}) \,\,} \ar[r]_-{\mathrm{K}_*(\pi_2)}
& {\,\,(\mathrm{K}_*(A_2),[1_{A_2}]_{\mathrm{K}_0}) \,\,} \ar[r]_-{}
& {\,\,0\,\,},}
$$ 
where $\mathrm{K}_*(\phi_0)$ is induced by $\phi_0$,  ${({H}(\rho)})^*$ is the restriction map of ${H}(\rho)$ between $\mathrm{K}_*$-groups
and  $\varrho$ is the induced map obtained from $\mathrm{K}_*(\phi_0)$ and $({{H}(\rho)})^*$.

Note that both $\mathrm{K}_*(\phi_0)$ and ${({H}(\rho)})^*$ are scaled  order-preserving maps, Proposition \ref{lin inj} implies that
 $\varrho$ is also a scaled  order-preserving map.

Recall that we have shown that $A_1,A_2$ are both simple A$\mathcal{HD}$ algebras of real rank zero  or both
simple Kirchberg algebras.
By \cite[Theorem 9.4]{DG} or Kirchberg-Phillips Classification Theorem (see \cite[Theorem 8.4.1]{Rbook}),
we can lift $\varrho$ to a unital isomorphism $\phi_1:\,A_1\to A_2$.
Now we have the following commutative diagram with exact rows:
$$
\xymatrixcolsep{1.8pc}
\xymatrix{
{\,\,0\,\,} \ar[r]^-{}
& {\,\,\mathrm{K}_*(B_1)\,\,} \ar[d]_-{{\rm id}} \ar[r]^-{\mathrm{K}_*(\iota_1)}
& {\,\,(\mathrm{K}_*(E_1),[1_{E_1}]_{\mathrm{K}_0})\,\,} \ar[d]_-{{\underline{H}(\rho)}^*} \ar[r]^-{\mathrm{K}_*(\phi_1\circ \pi_1) }
& {\,\,(\mathrm{K}_*(A_2),[1_{A_2}]_{\mathrm{K}_0})\,\,} \ar[d]_-{{\rm id}} \ar[r]^-{}
& {\,\,0\,\,\,} \\
{\,\,0\,\,} \ar[r]^-{}
& {\,\,\mathrm{K}_*(B_1)\,\,} \ar[r]_-{ \mathrm{K}_*( \iota_2\circ\phi_0)}
& {\,\,(\mathrm{K}_*(E_2),[1_{E_2}]_{\mathrm{K}_0}) \,\,} \ar[r]_-{\mathrm{K}_*(\pi_2)}
& {\,\,(\mathrm{K}_*(A_2),[1_{A_2}]_{\mathrm{K}_0}) \,\,} \ar[r]_-{}
& {\,\,0\,\,}.}
$$

That is,  by Theorem \ref{strong wei}, the following two unital extensions $f_1,f_2$ with trivial boundary maps
$$
f_1:\quad 0\to B_1 \xrightarrow{\iota_1} E_1\xrightarrow{\phi_1\circ \pi_1} A_2\to 0
$$
and
$$
f_2:\quad0\to B_1 \xrightarrow{\iota_2\circ \phi_0} E_2\xrightarrow{\pi_2} A_2\to 0
$$
share the same equivalent class in $\mathrm{Ext}_{ss}^u(A_2, B_1)$.

By Definition \ref{def ext}, we have both $f_1$ and  $f_2$ are unital essential full extensions,
and hence, by Proposition \ref{cfp abs} and  Remark \ref{CFP ex}, 
both $f_1$ and  $f_2$ are absorbing.
Thus,
$f_1$ and  $f_2$ are strongly unitarily equivalent,  then by Definition \ref{strong tui cong}, we have  $E_1\cong E_2$.
\end{proof}
\begin{remark}
Consider the following unital extension
$$
e_1:\quad 0\to \mathcal{O}_2\otimes\mathcal{K}\to E_1\to \mathcal{O}_2\to 0.
$$
As $\mathcal{O}_2$ has trivial K-groups, by Proposition \ref{lin inj},
we have $E_1$ has real rank zero and trivial K-groups. Since finite nuclear is preserved under taking extensions and the nuclear dimension of $\mathcal{O}_n$ $(n=2,3,\cdots)$ is one \cite{WZ},
by Remark \ref{CFP ex}, $\mathcal{O}_2\otimes\mathcal{K}$ and $E_1$ have  the corona factorization
property, then $e_1$ is absorbing.
By Theorem \ref{strong wei}, $E_1$ is unique up to isomorphism.
It is not hard to see that $E_1\cong \widetilde{\mathcal{K}}\otimes \mathcal{O}_2,$ and hence, $E_1$ is $\mathcal{O}_2$-stable and
$$
\mathrm{Lat}(E_1)=\{0\subset \mathcal{O}_2\otimes\mathcal{K}\subset E_1\}.
$$

Inductively, for $n\geq 2$, one may construct the unital extension
$$
e_n:\quad 0\to E_{n-1}\otimes\mathcal{K}\to E_n\to \mathcal{O}_2\to 0,
$$
with $E_n$ is unique up to isomorphism and has real rank zero and trivial K-groups. Also $E_n\otimes\mathcal{K}$ has the corona factorization
property.
Moreover, all such $E_n$ are separable nuclear and  $\mathcal{O}_2$-stable.
Particularly,
$$
\mathrm{Lat}(E_n)=\{0\subset \mathcal{O}_2\otimes\mathcal{K}\subset E_1\otimes\mathcal{K}\subset \cdots\subset  E_{n-1}\otimes\mathcal{K}\subset E_n\}.
$$

One can even build the unital extension
$$\textstyle
e_n:\quad 0\to \bigoplus\limits_{i=1}^nE_{i}\otimes\mathcal{K}\to C_n\to \mathcal{O}_2\to 0,
$$
with that $C_n$ are separable nuclear and  $\mathcal{O}_2$-stable C*-algebras of real rank zero and $C_n\otimes \mathcal{K}$ has the corona factorization property.

In general, there are a large amount of algebras in the class  $(\mathcal{L}3)$.
\end{remark}

\begin{remark}
In theorem \ref{main thm}, the real rank zero algebras we classified are either infinite or of stable rank one. But the new invariant would even distinguish more algebras in the setting of real rank zero. For example, take the stably finite algebra $A_1$ of real rank zero and stable rank two constructed in \cite[2.8]{Go}, which comes from an extension of a Bunce-Deddens algebra by $\mathcal{K}\oplus\mathcal{K}$. $(\mathrm{K}_0(A_1),\mathrm{K}_0^+(A_1))$ is as same as an ordered $\mathrm{K}_0$-group of AF-algebra $A'$, while $\mathrm{K}_1(A_1)=0$. This implies the ordered $\mathrm{K}_*$-group and even the $\underline{\mathrm{K}}$ do not distinguish $A_1$ and $A'$. However, we do have$$V(A_1)\ncong V(A')\quad {\rm and}\quad \underline{V}(A_1)\ncong \underline{V}(A'),$$as $V(A_1)$ does not have cancelation and $V(A')$ does.

\end{remark}






\begin{thebibliography}{}


\bibitem{AELL}
Q. An, G. A. Elliott, Z. Li and Z. Liu. The classification of certain ASH C*-algebras of real rank zero. J. Topol. Anal., 14 (1) (2022), 183--202.

\bibitem{AL jfa}
Q. An and Z. Liu. A total Cuntz semigroup for C*-algebras of stable rank one. J. Funct. Anal., 284 (8) (2023), No. 109858.

\bibitem{AL}
Q. An and Z. Liu. Total Cuntz semigroup, extension, and Elliott conjecture with real rank zero. Proc. Lond. Math. Soc. (3), 128 (4) (2024), Paper No. e12595, 40 pp.

\bibitem{ALbock}
Q. An and Z. Liu. Bockstein operations and extensions with trivial boundary maps. Preprint.

\bibitem{ALZ}
Q. An, Z. Liu and Y. Zhang.  On the classification of certain real rank zero C*-algebras. Sci. China Math., 65 (4) (2022), 753--792.








\bibitem{L2}
L. Cantier. Unitary Cuntz semigroups of ideals and quotients. M\"{u}nster J. of Math., 14 (2) (2021), 585--606.


\bibitem{BGSW}
 J. Bosa, J. Gabe, A. Sims and S. White. The nuclear dimension of $\mathcal{O}_{\infty}$-stable C*-algebras. Adv. Math., 401 (2022), No.108250.







\bibitem{Cu alg}
J. Cuntz. Simple C*-algebras generated by isometries. Commun. Math. Phys., 57 (2) (1977), 173--185.




\bibitem{Cu1}
J. Cuntz. A new look at KK-theory. K-Theory,  1 (1) (1987), 31--51.


\bibitem {DL0}
M. Dadarlat and T. A. Loring. Classifying C*-algebras via ordered mod-p K-theory. Math. Ann., 305 (1996), 601--616.



\bibitem {DEb}
M. Dadarlat and S. Eilers, The Bockstein Map is Necessary. Canadian Mathematical Bulletin 42.3 (1997), 274--284.



\bibitem {DE}
M. Dadarlat and S. Eilers. Compressing coefficients while preserving ideals in the K-theory for C*-algebras. K-Theory, 14 (1998), 281--304.



\bibitem {DG}
M. Dadarlat and G. Gong. A classification result for approximately homogeneous C*-algebras of real rank zero. Geom. Funct. Anal., 7 (4) (1997), 646--711.



\bibitem {DL3}
M. Dadarlat and T. A. Loring. Classifying C*-algebras via ordered mod-p K-theory. Math. Ann., 305 (4) (1996), 601--616.


\bibitem {Ei}
S. Eilers. A complete invariant for AD algebras with bounded torsion in $\mathrm{K}_1$. J. Funct. Anal., 139 (2) (1996), 325--348.


\bibitem{EGKRT} S. Eilers, J. Gabe, T. Katsura, E. Ruiz and M. Tomforde.  The extension problem for graph C*-algebras. Annals of K-theory, 5 (2) (2020), 295--315.

\bibitem{ERZ}
S. Eilers, G. Restorff and E. Ruiz. Classification of extensions of classifiable
C*-algebras. Adv. Math., 222 (6) (2009), 2153--2172.

\bibitem{ERR}
S. Eilers, G. Restorff and  E. Ruiz. The ordered K-theory of a full extension. Canad. J. Math., 66 (3) (2014), 596--625.





\bibitem {Ell1}
G. A. Elliott. On the classification of inductive limits of sequence of semisimple finite-dimensional algebras, J. Algebra, 38 (1976), 29--44.

\bibitem {Ell}
G. A. Elliott. On the classification of C*-algebras of real rank zero. J.
Reine Angew. Math., 443 (1993), 179--219.



\bibitem{EGLN}
G. A. Elliott, G. Gong, H. Lin and Z. Niu. On the classification of simple
amenable $\mathrm{C}^*$-algebras with finite decomposition rank. II, to appear on J. Noncommut. Geom.

\bibitem{EGLN2}
G. A. Elliott, G. Gong, H. Lin and Z. Niu. The classification of simple separable unital locally ASH algebras. J. Funct. Anal., 272 (12) (2017), 5307--5359.

\bibitem{EGS}
G. A. Elliott, G. Gong and H. Su. On the classification of C*-algebras of real rank zero. IV. Reduction to local spectrum of dimension two. Operator algebras and their applications, II (Waterloo, ON, 1994/1995), 73--95.
Fields Inst. Commun., 20.
American Mathematical Society, Providence, RI, 1998.





\bibitem{Gabe crelle}
J. Gabe. A new proof of
Kirchberg's $\mathcal{O}_2$-stable classification.
J.
Reine Angew. Math., 761 (2020),  247--289.

\bibitem{GaLN}
J. Gabe, H. Lin and P. W. Ng. Extensions of C*-algebras. arxiv.org/abs/2307.15558v1

\bibitem{GR}
J. Gabe and E. Ruiz. The unital Ext-groups and classification of C*-algebras. Glasgow Math. J., 62 (2020), 201--231.


\bibitem{G}
G. Gong. Classification of C*-Algebras of Real Rank Zero and Unsuspended E-Equivalence Types. J. Funct. Anal., 152 (2) (1998), 281--329.


\bibitem{GJL}
G. Gong, C. Jiang and L. Li. Hausdorffified algebraic $\mathrm{K}_1$-group and invariants for C*-algebras with the ideal property. Ann. K-Theory, 5 (1) (2020), 43--78.

\bibitem{GJL2}
G. Gong, C. Jiang and L. Li. A classification of inductive limit C*-algebras with ideal property. Trans. London Math. Soc., 9 (1) (2022), 158--236.

\bibitem{GJLP1}
G. Gong, C. Jiang, L. Li and C. Pasnicu. A$\mathbb{T}$  structure of  AH  algebras with the ideal property and torsion free  K-theory.
J. Funct. Anal., 258 (6) (2010), 2119--2143.

\bibitem{GJLP2}
G. Gong, C. Jiang, L. Li and C. Pasnicu.  A reduction theorem for  AH  algebras with the ideal property.
Int. Math. Res. Not. IMRN, (24) (2018), 7606--7641.


\bibitem{GLN1}
G. Gong, H. Lin and Z. Niu. A classification of finite simple amenable $\mathcal{Z}$-stable
C*-algebras, I: C*-algebras with generalized tracial rank one. C. R. Math. Acad. Sci. Soc. R. Can., 42 (3) (2020), no. 3, 63--450.

\bibitem{GLN2}
G. Gong, H. Lin and Z. Niu. A classification of finite simple amenable $\mathcal{Z}$-stable
C*-algebras, II: C*-algebras with rational generalized tracial rank one. C. R. Math. Acad. Sci. Soc. R. Can., 42 (4) (2020), 451--539.


\bibitem{Go}
K. R. Goodearl. C*-algebras of real rank zero whose $\mathrm{K}_0$'s are not Riesz groups. Canad. Math. Bull., 39 (4) (1996), 429--437.




\bibitem{KN2}
D. Kucerovsky and P. W. Ng. $S$-regularity and the corona factorization property.
Math. Scand., 99 (2) (2006), 204--216.



\bibitem{KW}
E. Kirchberg and  W. Winter.
Covering dimension and quasidiagonality.
Internat. J. Math., 15 (1) (2004), 63--85.

\bibitem{L0}
H. Lin. ${\rm C}^*$-algebra extensions of  $C(X)$. Mem. Amer. Math. Soc., 115 (550) (1995), vi+89 pp.

\bibitem{L1}
H. Lin. Extensions by ${\rm C}^*$-algebras with real rank zero. II, Proc. London Math. Soc., 71 (3) (1995), 641–-674.

\bibitem{L00}
H. Lin. Extensions by ${\rm C}^*$-algebras of real rank zero. III, Proc. London Math. Soc., 76 (3) (1998), 634--666.

\bibitem{L}
H. Lin. An Introduction to the Classification of Amenable $\mathrm{C}^*$-Algebras. World Scientific Publishing Co. Inc., River Edge, NJ, 2001.




\bibitem{LN}
H. Lin and P. W. Ng.
Extensions of $\mathrm{C}^*$-algebras by a small ideal. Int. Math. Res. Not. IMRN, 12 (2023), 10350--10438.


\bibitem{LR}
H. Lin and M. R\o rdam. Extensions of inductive limits of circle algebras. J. London Math. Soc., 51 (2) (1995), 603--613.

\bibitem{LS}
H. Lin and H. Su. Classification of direct limits of generalized Toeplitz algebras. Pacific J. Math., 181 (1) (1997), 89--140.





\bibitem{Ng}
P. W. Ng. The corona factorization property. Operator theory, operator algebras, and applications, 97--110, Contemp. Math., 414, Amer. Math. Soc., Providence, RI, 2006.


\bibitem{R CFP}
L. Robert. Nuclear dimension and $n$-comparison. M\"{u}nster J. of Math., 4 (2011), 65--72



\bibitem{RS}
J. Rosenberg and C. Schochet. The K\"{u}nneth theorem and the universal coefficient theorem for Kasparov's generalized $\mathrm{K}$-functor. Duke Math. J., 55 (1987), 431--474.


\bibitem{Rbook}
M. R\o rdam.  Classification of Nuclear C*-algebras. Springer Berlin Heidelberg. (2002).

\bibitem{R2}
 M. R\o rdam. Classification of extensions of certain $C^*$-algebras by their six term exact sequences in K-theory. Math. Ann., 308 (1997), 93--117.

\bibitem{R acta}
M. R\o rdam, A simple C*-algebra with a finite and an infinite projection.  Acta Math., 191 (1) (2003): 109--142.





\bibitem{Sk1}
G. Skandalis. On the strong Ext bifunctor. Preprint. (1983)

\bibitem{Sk2}
G. Skandalis. Kasparovs bivariant K-theory and applications. Expos. Math., 9 (1991), 193–-250.



\bibitem{TWW}
A. Tikuisis, S. White and W. Winter. Quasidiagonality of nuclear {$C^\ast$}-algebras. Ann. of Math. (2), 185 (1) (2017),  229--284.


\bibitem{V}
J. Villadsen. On the stable rank of simple C*-algebras. J. Amer. Math. Soc., 12 (4) (1999),  1091--1102.

\bibitem{W}
C. Wei.  On the classification of certain unital extensions of
C*-algebras. Houston J.  Math., 41 (3) (2015), 965--991.




\bibitem{WZ}
W. Winter and J. Zacharias.
The nuclear dimension of C*-algebras. Adv. Math., 224 (2) (2010), 461--498.



\bibitem{Z}
S. Zhang. A Riesz decomposition property and ideal structure of multiplier algebras. J. Operator Theory, 24 (1990), 209--225.



\end{thebibliography}
\end{document}